\newtheorem{theorem}{Theorem}
\newtheorem{corollary}[theorem]{Corollary}
\newtheorem{definition}[theorem]{Definition}
\newtheorem{example}[theorem]{Example}
\newtheorem{proposition}[theorem]{Proposition}
\newtheorem{remark}[theorem]{Remark}
\title{Nonequilibrium in Thermodynamic Formalism: the Second Law, gases and Information Geometry}
\author{A. O. Lopes and R. Ruggiero}
\begin{document}

\maketitle

\begin{abstract} 
In Nonequilibrium Thermodynamics and Information Theory, the relative entropy (or, KL divergence) plays a very   important role.
Consider a  H\"older Jacobian  $J$ and the Ruelle (transfer) operator $\mathcal{L}_{\log J}.$ Two equilibrium  probabilities $\mu_1$ and $\mu_2$, can interact via a  discrete-time {\it Thermodynamic Operation} given  by the action   {\it of the dual of the Ruelle operator} 
$ \mathcal{L}_{\log J}^*$.   We argue that the law 
$\mu \to  \mathcal{L}_{\log J}^*(\mu)$, producing nonequilibrium,  can be seen as  a Thermodynamic Operation after showing that it's a manifestation of the Second Law of Thermodynamics.
We also  show that the change of  relative entropy satisfies
$$  D_{K L}      (\mu_1,\mu_2)  -  D_{K L}      (\mathcal{L}_{\log J}^*(\mu_1),\mathcal{L}_{\log J}^*(\mu_2))= 0.$$
Furthermore, we  describe sufficient conditions on $J,\mu_1$ for getting  $h(\mathcal{L}_{\log J}^*(\mu_1))\geq h(\mu_1)$, where $h$ is entropy. Recalling a natural Riemannian metric in the  Banach manifold of H\"older equilibrium probabilities  we exhibit the second-order Taylor formula for an infinitesimal tangent  change of KL divergence; a  crucial estimate in Information Geometry. We introduce concepts like heat, work, volume, pressure, and internal energy, which play here the role of the analogous ones in  Thermodynamics of gases. We briefly describe the MaxEnt method.
\end{abstract}

{\center Inst. Mat. UFRGS, Av. Bento Gonvalves 9500 - 91.500 Porto Alegre, Brazil}

Email of Artur O. Lopes is arturoscar.lopes@gmail.com

\smallskip

Email of R. Ruggiero is Rafael.O.Ruggiero@gmail.com

\section{Introduction} \label{int}

\medskip

This work has multiple purposes and can be seen as a modest attempt to increase the scope of the problems that can be considered within Thermodynamic Formalism. We tried to identify several fundamental concepts of Gas Thermodynamics and Information Theory in order to  briefly describe what are their correspondents in Thermodynamic Formalism. It can also be seen as a {\it  dictionary} or a {\it guide} for the mathematician less familiarized with certain concepts, relationships, and fundamental problems of Mathematical Physics; we describe a large range of topics that could establish a common ground for discussion between mathematicians and physicists. The reader will notice that under the scope of {\it nonequilibrium} we have gathered several topics that, in a sense, are not exactly related.  Non equilibrium is a topic that can encompass quite different aspects.
We point out that Thermodynamic Formalism is  a {\it  dynamical theory}.  We will deal mainly with H\"older Gibbs probabilities on $\Omega=\{1,2,...,d\}^\mathbb{N}$  (which are all invariant for the shift  $\sigma$) and these probabilities are singular with respect to each other -  the entropy considered here is the {\it Shannon-Kolmogorov} entropy.

Here we describe {\it idealized models of real physical systems} related to non equilibrium Thermodynamics. 
We recall a statement by  O. Penrose  (author of  ``Foundations of Statistical Mechanics'' - reference \cite{Penrose}) cited in the beginning of section 3 in \cite{Altaner1}: 

{\it The crucial [postulate in idealized models of real physical systems] is
expressing the assumption, that the successive observational states of a
physical system form a Markov chain. This is a strong assumption, [. . . ],
but even so, it has been adopted here because it provides the simplest precise
formulation of a hypothesis that appears to underlie all applications
of probability theory in physics.}

\smallskip

In Section \ref{gaz} we explain for the mathematician some of the basic results and properties of  gas Thermodynamics; in particular, what is a {\it Thermodynamic Operation} in this setting.
We point out that in Thermodynamic of gases the variation of entropy is more important than the absolute value of entropy. 
The {\bf Second Law of Thermodynamics} claims that the change of entropy must be {\bf zero} for {\bf reversible} transformations and {\bf positive} for {\bf irreversible} transformations of the system.

We advocate that a procedure taking an equilibrium probability $\mu_1$ to a non equilibrium probability $\mu_2$ qualifies to be called a
{\it Thermodynamic Operation}, if the following is true: $\mu_2$ is naturally associated with a potential $A_2$, that in turn has an equilibrium
probability  $\mu_3$, and, finally, it turns out  that the entropy of $\mu_3$ is greater than the entropy of $\mu_1$ (see Remark \ref{uau} here).

Definitions  \ref{first} and \ref{klr}  will describe two possible meanings for the {\it  Second Law of Thermodynamics for irreversible systems} in Thermodynamic Formalism.
If one wants to abstract all issues related to Physics,  we can say, in purely mathematical terms,  that our Section \ref{pre} is a study of the change of entropy and 
relative entropy under the action of the dual of the Ruelle operator. Under the {\it irreversible Second Law} the entropy should increase.

The convolution of Gibbs probabilities on the circle (invariant for the dynamics  $T(x)=2 x$, mod 1), which increase entropy, could be also be seen as a form of
Thermodynamic Operation (see \cite{Lind} and \cite{Lo2})

In Information Theory an increase in entropy is associated with an increase in the uncertainty of the information  and
this also can be seen as a discrete-time form of the Second Law of Thermodynamics.

Reading  \cite{Cat}, \cite{Altaner} and \cite{Wolp} was quite  enlightening to the understanding of  the interplay  between Statistical Physics and Information Theory. \cite{MNS} covers a large number of  interesting results in Nonequilibrium Statistical Mechanics. Sections 4.2, 4.3 and 4.8  in \cite{Cat} are dedicated to the use of KL divergence in Physics. Section E in \cite{Altaner} describes the importance of the role of KL divergence  for formulating
nonequilibrium thermodynamics (see also \cite{Shore}).
Section 4.1 in \cite{Sa} describes in the setting of Information  Theory the relation of KL divergence and the second law of Thermodynamics.

The identification of the macroscopic entropy with the lack of information about the
microstate is the basic link between probability and phenomenological thermodynamics (see \cite{Sch}, \cite{Cat}, \cite{Wang}, 
\cite{Ben}, \cite{ThoQi} and \cite{Altaner2}).
From the point of view of Physics, a self-consistent approach to modern nonequilibrium thermodynamics based on the rationale of Information theory is presented in \cite{Altaner}.

Concepts like energy, work  and entropy production are  in common use in  \cite{CL} which considers the thermodynamics of computation and error correction (see also  \cite{ChuS} which considers the computation of finite state machines and time-inhomogeneous Markov chains). Section VIII in \cite{Bennett} describes  Landauer’s principle in Information Theory, which is related to
logically reversible computations in a thermodynamically reversible fashion.

Our reasoning in Sections 2 to 4  was motivated  by  the works  \cite{Sa} and \cite{Sch}, where  the action  $p \to P\, p$, with  $P$ a $d$ by $d$  stochastic matrix and $p=(p_1,p_2,...,p_d)$ a vector of probability,  plays the role of  random source (see also \cite{Rached}). 
They show that
$$D_{KL} (p,q)     -  D_{KL} (P (p),P(q))\geq 0,$$
where $D_{KL}$ is KL divergence.
When the matrix $P$ is double stochastic they show that the entropy $h$ increase under the action $p \to P\, p$;  and we advocate the claim that this is a manifestation of the Second Law of Thermodynamics (see for instance Example \ref{exx}). The action of $P$ is the action of a Markov Chain operator (see expression (4.3) in \cite{Sa} for the setting of Information Theory  and also expressions (3.8) and (3.14) page 26 in \cite{Sch} for the setting of Thermodynamic of gases). In information Theory the action of the matrix $P$ is one of possible models for a random channel (see  Example 9.4.14 in \cite{Gray},  \cite{mui} or \cite{pou}). Given a probability $\mu$, the value $-h(\mu)=I(\mu)$  is sometimes called the {\it information measure} of  $\mu$ (see (1.11) in page 21 in \cite{Sch}). Larger the entropy smaller the information measure of $\mu$.

Here the action of the dual of the Ruelle operator (also called transfer operator) will replace the action of the stochastic matrix $P$ in \cite{Sa} and \cite{Sch}. Remark \ref {uau} in Section \ref{FR} will justify the claim that the law $\mu_1 \to \mathcal{L}_{\log J}^*(\mu_1)$
corresponds to a {\it discrete time irreversible  thermodynamic operation}.   There are some conceptual differences between the two settings and this will be explained in Remark \ref{uyt8}.

We   investigate in Section \ref{pre} the change of KL divergence and also  sufficient conditions for the increase  of entropy $h$ under the action of $\mathcal{L}_{\log J}^*$.  This will be carefully explained later in Section \ref{pre}. We point out that the KL divergence is not a  continuous function; in fact, it is  a lower semi-continuous  function of pair of probabilities (see \cite{Gray} or \cite{Posner}).

Results for the law $\mu_1 \to \mathcal{L}_{\log J}^*(\mu_1)$ also contemplates the Markov Chain (Markov probabilities) case, indeed, in  Example \ref{exlo} we explain why $e^{\log J}=J$ plays the role of the matrix $P$. 
 Section 3 in \cite{Altaner1} also consider the Markov Chain case but in a setting that we believe is different from ours.

\smallskip

 In our paper the Sections \ref{pre} and  \ref{fish} contain new results. The other sections  have mainly the purpose of describing
 the main concepts of Thermodynamic of gases in the setting of Thermodynamic Formalism.

This article is structured as follows: in section \ref{FR} we present a brief description of  some topics in Thermodynamic Formalism and the Ruelle operator. Section \ref{KL} outlines some definitions and basic  properties  of  the KL divergence.

In section \ref{pre} we   investigate the change of KL divergence under the action of the dual of the Ruelle operator.       Among other things we show that $D_{KL}\, (\mu_1, \mu_2) - D_{K L}(\mathcal{L}_{\log J}^*(\mu_1),\mathcal{L}_{\log J}^*(\mu_2) ) = 0 $ (see Theorem \ref{yyt}). We also present sufficient conditions on $J,\mu$ for the validity  
of $h(\mathcal{L}_{\log J}^*(\mu)) \geq h(\mu)$. Theorem \ref{yyt721}  will show that it makes sense to call the law $\mu \to \mathcal{L}_{\log J}^*(\mu)$ a discrete time Thermodynamical operation (see also Remark \ref{uau} in Section \ref{FR}).
The action of $\mathcal{L}_{\log J}^*$ is in accordance  with the {\it Second Law for irreversible systems}  point of view.

Section \ref{fish} considers briefly some of the main issues in Information Geometry for the case of the Riemannian Banach manifold of H\"older Gibbs probabilities: Fisher Information, susceptibility, asymptotic variance, curvature  and infinitesimal variation of the KL divergence.
This corresponds to a  continuous variation of thermodynamic quantities via a tangent vector. Among other things we are interested in
the Taylor formula of order $2$ for
\begin{equation} \label{brid1} t \to  D_{K L}      (\mu_0,\mu_t),
\end{equation}
when $\mu_t \in \mathcal{G}$ is obtained via an infinitesimal tangent variation of the point $\mu_0$ at $\mathcal{G}$  (see Proposition \ref{corre} in Section \ref{fish}).
Tangent vectors  to $\mathcal{G}$  are described in \cite{KGLM} and \cite{LM}. The infinite dimensional manifold  $\mathcal{G}$ has points of positive and points of negative curvature (see \cite{LM}). The parameter $t$ in \eqref{brid1} can be considered as time  in a {\it weakly relaxing} setting (expression \eqref{brid1} above corresponds to  expression (29) in \cite{Altaner}). For the second order Taylor formula, we obtain a  result similar to the expression (1.24) in \cite{Ama}.

Section \ref{gaz} briefly describes the basic results and relations that are well known regarding the classical thermodynamic quantities of the Thermodynamic of gases.
This section can be seen as a guide to conduct us in the search for concepts and properties that should play similar roles in Thermodynamic Formalism. This section can be skipped for the reader familiar  with gas Thermodynamics.

In section \ref{gazes}, in the context of Thermodynamic Formalism, we will present the concepts of heat, work, internal energy, volume, pressure, and some of its main relations (see for instance \eqref{p275}, \eqref{p276} and \eqref{p277}). This corresponds to a  real continuous  variation of Thermodynamic quantities with volume, temperature, and other external parameters.  We will also show in \eqref{rert}  {\it the fundamental Gibbs equation} in our setting. In our understanding, the well-known concept of topological pressure does not correspond to the concept of pressure of the Thermodynamic of gases (see expression \eqref{p575}). For the mathematician unfamiliar with some of the main topics of gas thermodynamics  we also describe in this section the {\it method of maximum entropy} on the context of Thermodynamic Formalism. The mathematics in this part of the section is not exactly new (see \cite{CL} and \cite{Lall1}) but we believe is worthwhile to   put all this in perspective.

 In section \ref{TO} we will consider the discrete-time thermodynamic operation on the system in equilibrium and we are interested in the {\it  First Law  of Thermodynamics =  conservation of energy} : $ \Delta W + \Delta Q = \Delta U ,$     where  $\Delta W$ is the change in work,  $ \Delta Q$ is the change in heat and $\Delta U$  denotes the change in the internal energy.

Section \ref{EP} is  a synthetic review of \cite{LM}. In this section we describe the entropy production produced by the reversion of direction on the lattice $\mathbb{Z}$ (which could also be seen as a kind of   thermodynamic operation).   For a given potential
$ A: \{1,2, ..., d \}^\mathbb{N} \to \mathbb {R} $, we will describe the meaning of being symmetrical with respect  to the involution kernel. This can be understood as saying that the system associated with this potential is reversible. We will show that the entropy production is zero if the potential is symmetrical in relation to the involution kernel.

We would like to thank A. Caticha, C. Maes and L. F. Guidi for very helpful conversations. We thank the referees for their helpful comments during the submission process for this paper.

\section{Preliminaries on Thermodynamic  Formalism and the Ruelle operator} \label{FR}

Now, we will explain to the reader some basic properties of the Ruelle operator and its role in Thermodynamic Formalism (a more complete description of the subject can be obtained in \cite{PP} or \cite{VO}).
The reader familiar with the topic of this  section can skip it.

A element $x$ in $\Omega=\{1,2,...,d\}^\mathbb{N}$   is denoted by $ x =(x_1,x_2,...,x_j,..), x_j\in \{1,2,...,d\}$, $j\in \mathbb{N}$.
The shift $\sigma:\Omega \to \Omega$ is the transformation such that $\sigma(x_1,x_2,...,x_j,..)= (x_2,x_3,...,x_j,..).$

Consider the metric on $\Omega$  such that $d(x,y) = d( (x_1,...,x_j,..),(y_1,...,y_j,..))= 2^{-N}$, where $N$ is the smaller $j\in \mathbb{N}$, satisfying $x_j\neq y_j.$ The space $\Omega$ is compact with such metric.

A probability $\mu$ on $\Omega$ is called $\sigma$-invariant if for any continuous function $\varphi: \Omega \to \mathbb{R}$ we have that
\begin{equation} \label{Gre}\int \varphi d \mu = \int (\varphi\circ  \sigma ) d \mu.
\end{equation}

Invariant probabilities correspond to stationary stochastic process $X_n$, $n \in \mathbb{N}$,  with values in $\{1,2,...,d\}.$ In this case $n$ means time.

From the Statistical Mechanics point of view, considering $\{1,2,...,d\}$ as a set of spins  and
$\Omega=\{1,2,...,d\}^\mathbb{N}$  as the set of strings of spins in the lattice $\mathbb{N}$, the $\sigma$-invariant probabilities on
$\Omega=\{1,2,...,d\}^\mathbb{N}$ describe the set of probabilities which are invariant by translation in the lattice. In this case, $\sigma^n: \Omega \to \Omega$, $n \in \mathbb{N}$, does not mean $n$-iteration on time.

The cylinder $\overline{r} \subset \{1,2,...,d\}^\mathbb{N}$, $r=1,2,...,d$, is the set of elements of the form $(r,x_2,x_3,...,x_j,..)$.
We call $\overline{r}$ a cylinder of size one. Note that $\sigma:\overline{r} \to \Omega$ is injective. 

\smallskip

\begin{definition} \label{rrr} Given a probability $\mu$ in $\Omega$, we say that $\mu$ satisfies  the A assumption, if for any fixed  $j$, we have the property: for a Borel 
set $B\subset \overline{j}$, if 
$\mu(\sigma (B))=0$, then, $\mu(B)=0.$
\end{definition}

Invariant probabilities satisfy the A assumption. This assumption is named non-singularity in abstract ergodic theory.

 For each
 $j\in\{1,...,d\}$, denote
$\tau_j :\Omega \to \overline{j}$, the inverse of $\sigma: \overline{j} \to \Omega$.

If $\mu$ satisfies the A assumption, then, for any fixed $j$ there exist a Radon-Nikodym derivative $f_j$, such that, 
for any Borel 
set $B\subset \overline{j}$, we get  $\mu(B)= \int_{\sigma(B)} (f_j \circ \tau_j) \, d \mu.$ If $f_j$ is strictly positive, this is equivalent to the condition:
given a continuous function
$\varphi : \Omega  \to \mathbb{R}$ which is zero outside $\overline{j}$, then
\begin{equation}  \label{were1}  \int_\Omega \varphi (\tau_j(x)) \frac{1}{ f_j(\tau_j(x))} d\mu(x)=\int_{\overline{j}} \varphi(y) d \mu(y)= \int_\Omega \varphi(y) d \mu(y).
\end{equation}

Following section 9.7 in \cite{VO} we set:

\begin{definition} Given a probability $\mu$ on $\{1,2,...,d\}^\mathbb{N}$  satisfying the A assumption, assume that for any $j$, the Radon-Nikodym derivative $f_j$ is strictly positive. For each cylinder  $\overline{j}$, $r=1,2,...,d$,
we set  
$$J_j= \frac{1}{f_j},$$ 
$J_r: \overline{j} \to \mathbb{R}$.
The function $J: \Omega \to \mathbb{R}$, such that, in each  cylinder $\overline{j}$,  $j=1,2,...,d$, is equal to $J_j$, will be called the
{\bf inverse Radon-Nikodym derivative in injective branches} (IRN for short)  of the probability $\mu$.
\end{definition}

We assume here that all
$J_j, j=1,2,...,d$, are continuous. It can be also written as 
$J_j= d \mu/(d \mu \circ \sigma)|_{\overline{j}}.$

Another way of expressing the  relationship \eqref{were1}   for the IRN $ J $ of the probability $ \mu $ is as follows:
for any
measurable  Borel set $B\subset \overline{j}$
\begin{equation}  \label{were} \int_{ \sigma(B)} d \mu  =\mu ( \sigma(B)) = \int_B J_j^{-1} d \mu,
\end{equation}
or alternatively, for any continuous function $\varphi$

\begin{equation}  \label{weret1}  \int_\Omega \varphi (\tau_j(x)) \,
J (\tau_j(x)) d\mu(x)= \int_\Omega \varphi(y) d \mu(y).
\end{equation}

\begin{definition} We will say that the probability $\mu$ on $\Omega$ is suitable, if its IRN $J$ is positive and
continuous.
\end{definition}

\begin{remark} \label{rtrt}  If $\mu$ is suitable, its IRN $J$ is a continuous function bounded away from zero. Then, the support of $\mu$ is the whole set $\Omega$.  
Indeed, if there exists an open set $B$ with zero probability, then $\sigma^{-1}(B)$ also has zero measure. Now, taking inverse images 
$\sigma^{-n}(B)$ and using the expansiveness of $\sigma$ we reach a contradiction.
\end{remark}

We will be concerned here  only  with probabilities on $\Omega $ that are suitable. In some moments it will be important to check if a given probability $\mu$ on $\Omega$ is suitable and, eventually,  if its IRN  is of H\"older class.


Consider  a suitable probability $ \mu$ on 
$ \Omega= \{1,2,...,d\}^\mathbb{N}$ and the associated IRN $J$. If the suitable probability $\mu$ is invariant for the shift, then we get the property: for each $ x =(x_1,x_2,...,x_n,..) \in \{1,2,...,d\}^\mathbb{N}$
\begin{equation} \label{Colo1} \sum_{\{y \, |\, \sigma(y)=x\}} J(y) =   \sum_{a=1}^d J(a \,x)=1.
\end{equation}

For the class of probabilities $\mu$ on $\Omega$ that we consider here to say that  $\mu$ is $\sigma$-invariant is equivalent to say that its  IRN $J$ is positive and satisfies \eqref{Colo1}.

\begin{definition}
In the  case we consider  a suitable $\sigma$-invariant probability $\mu$ we will say that the IRN  $J$ is  a {\bf Jacobian}.  If $\mu$ is not invariant we will just say that $J$ is a  IRN .
\end{definition}

In other words:  a Jacobian $J:\Omega \to (0,1)$ is a continuous positive function such that \eqref{Colo1} is true.

It is fair to say that would be better to call Jacobian the function  $J^{-1}$ but  we will keep this terminology that has been used previously in other works (the Jacobian $J$ in \cite{VO} corresponds to our $J^{-1}$).

\begin{example} \label{exlo}  Suppose that  $\mu$ is  stationary Markov probability in $\Omega=\{1,2\}^\mathbb{N}$ obtained from
a column stochastic matrix $P$ with  positive entries. Denote by $\bar{\pi} = (\pi_1,\pi_2)$ the  initial vector of probability which is invariant for $P$.  The cylinder (of size two) $\overline{i j}  \subset \{1,2\}^\mathbb{N}$,  for fixed $i,j=1,2$,  is the set of elements $x$ of the form
$x=(i,j,x_3,x_4,..)$ , $x_j \in \{1,2\}$, $j\geq 3.$  By definition
$$ \mu ( \overline{j_1,  ...,
 j_n}\, )= P_{ j_n\,j_{n-1}\,}  ... \, \, P_{ j_3\,j_2}\,P_{ j_2\,j_1} \,\pi_{j_1} .$$
In this case
$\mu( \overline{i j}) =  P_{j i}  \pi_i $.

$P$ acts on the right  on probabilities $(p_1,p_2)$ (column vectors)  and on the left on functions $(f_1,f_2)$ (line vectors).   We assumed above that
$ P \left(
\begin{array}{cc}
\pi_1 \\
\pi_2
\end{array}
\right)= \left(
\begin{array}{cc}
\pi_1 \\
\pi_2
\end{array}
\right)$ and $(1\,1) P=(1\,1).$

In this case  one can show that the H\"older Jacobian $J$ of the $\sigma$-invariant probability $\mu$ is constant in cylinders of size two. For $x$ in  the cylinder  $\overline{i j} $ we get that  
$$e^{\log J (x)}= J(x) = \frac{\pi_i \,P_{i\,j}}{\pi_j}= P_{j i}=e^{\log  P_{j i}}$$ 
(see for instance \cite{PP} or  \cite{LNotes}).  Note that for all
$i\in\{1,2\}$ we get that $J(1 i) + J (2 i)=1$. The function $J$ is H\"older in this case.

If $P$ is symmetric, then  $J(x) = P_{i j}$, for $x \in \overline{i j}$.
\end{example}

  \begin{definition} Given a continuous function $A: \Omega \to \mathbb{R}$, {\bf the Ruelle operator} $\mathcal{L}_A$ acts on the set of continuous functions $\varphi: \Omega \to \mathbb{R}$ in the following way: we say that $\psi= \mathcal{L}_A (\varphi)$, if for all $x =(x_1,x_2,...,x_n,..)\in \Omega$
  $$  \psi(x) = \mathcal{L}_A (\varphi) \, (x) =$$
  \begin{equation}\label{Rulu}  \sum_{a=1}^d e^{ A ( a,  x_1,x_2,...,x_n,..) } \,\varphi ( a,  x_1,x_2,...,x_n,..) =\sum_{a=1}^d e^{ A (\tau_a(x)) } \,\varphi (\tau_a(x)).
  \end{equation}
  \end{definition}

 In Thermodynamic Formalism the Ruelle operator plays the role of the transfer operator of  Statistical Mechanics.
 We say that the potential $A$ is normalized if $\mathcal{L}_A (1)=1$.
  Consider a H\"older Jacobian $J: \Omega \to \mathbb{R} $, in this case, $\log J$ is normalized. Indeed,
$\mathcal{L}_{\log J}(1)=1$ follows from \eqref{Colo1}. Note that the Jacobian $J$ of example \ref{exlo} is normalized.

Given a H\"older potential $A$, there exists $\lambda>0$ and H\"older positive function $\varphi:\Omega \to \mathbb{R}$, such that,
$\mathcal{L}_A (\varphi) = \lambda \varphi$ (see \cite{PP}). One can show that the positive function $J$ such that
 \begin{equation}\label{Rululi}\log J = A + \log \varphi - \log (\varphi \circ \sigma) - \log \lambda
 \end{equation}
is a H\"older normalized Jacobian.

If $A =  \log J$ depends on two coordinates, the action of the operator described in \eqref{Rulu} on a vector $ (\varphi(1),...,.\varphi(d)) \in \mathbb{R}^d$ looks like  the action  of a column stochastic matrix $P$ - acting on vectors (= functions) on the left -, indeed if $A =  \log J=\log P$
\begin{equation} \label{poo}\mathcal{L}_{\log J}  (\varphi) \, (x) = \sum_{a=1}^d  e^{ \log J ( a,  x_1)}  \,\varphi (a) = \sum_{a=1}^d  J ( a,  x_1)  \,\varphi (a).
\end{equation}

We say that an invariant probability $\mu$ is a H\"older Gibbs probability (a Gibbs probability for short) if its Radon-Nikodin derivative (a Jacobian $J$) is
a positive H\"older function. Such $\mu$ has support on the all space $\Omega$ and is ergodic. There exists a bijection between H\"older Gibbs probability $\mu$ and
H\"older Jacobians.

The study of the topological pressure, entropy, and equilibrium states in the one-dimensional lattice is the main topic of Thermodynamic Formalism (see \cite{PP} and \cite{VO}). At the beginning of Section \ref{gazes} we present a more detailed  description of the main definitions and results in such theory.
{\bf Equilibrium states} (see Definition \ref{fufu}) will be also called {\bf Gibbs states} here.

We denote by
$\mathcal{G}$ the set of H\"older Gibbs probabilities. $\mathcal{G}$ is an analytical Riemannian Banach manifold (we refer the reader to \cite{KGLM} and \cite{LR} where it is shown that $\mathcal{G}$ is an infinite dimensional analytic manifold with a natural Riemmanian metric associated to the asymptotic variance). Two different H\"older Gibbs probabilities are singular with each other.

An important property of the Ruelle operator $\mathcal{L}_{\log J}$, where $J$ is the Jacobian of an invariant probability $\mu$   is: for any continuous function $\varphi: \Omega \to \mathbb{R}$
\begin{equation} \label{Colo}   \mathcal{L}_{\log J} (\varphi \circ \sigma) = \varphi.
\end{equation}

This follows at once from \eqref{Colo1}. Note that the claim is not true if $J$ is just a IRN.

\begin{definition} Given a continuous function $A:\Omega \to \mathbb{R}$, 
the {\bf dual  Ruelle operator} $\mathcal{L}_{A}^*$ acts on  finite measures on $\Omega$ in the following way: we set that  $\rho_2 = \mathcal{L}_{A}^* (\rho_1)$, in the case that for any
continuous 
function $\varphi: \Omega \to \mathbb{R}$
\begin{equation} \label{bact3} \int \varphi d \rho_2 =   \int  \mathcal{L}_{A} (\varphi) d \rho_1.
\end{equation}
\end{definition}

 When $A= \log J$, for some continuous Jacobian $J$, then $\mathcal{L}_{\log J}^*$ acts on probabilities on $\Omega$ because $\mathcal{L}_{\log J}(1)=1$.

Given a H\"older Jacobian $J$,
the dual  Ruelle operator $\mathcal{L}_{\log J}^*$ preserves the set of probabilities but does not preserve the set of invariant probabilities. Indeed, assume that $\mu_1$ is invariant, denote
 $\mu_2 = \mathcal{L}_{\log J}^* (\mu_1)$, then we claim that $\mu_2$ is not invariant.
In fact, assume by contradiction that $\mu_2$ is invariant: given a continuous function $\varphi$, then, from \eqref{Colo}
 \begin{equation} \label{corri}\int \varphi d  \mu_2 =\int (\varphi \circ \sigma) d  \mu_2=\int\mathcal{L}_{\log J} (\varphi \circ \sigma) d  \mu_1=  \int \varphi d \mu_1.
 \end{equation}

 From this follows that $\mu_2$ is invariant just in the case $\mu_1=\mu_2$. Note that also in the  case that $ \mu_1 $ is not invariant
   we  get that $ \mu_2 $ is not invariant.

\begin{remark} \label{supe} If   $\mathcal{L}_{\log J}$ is the Ruelle operator for a continuous  Jacobian $\log J:\Omega \to \mathbb{R}$ and $\mu_1$ is a suitable probability, then, $\mu_2= \mathcal{L}_{\log J}^* (\mu_1)$ is also  suitable and has a continuous  IRN
$J_2:\Omega \to \mathbb{R}$ (see Corollary \ref{cont}). If $J$ is H\"older and the IRN of $\mu_1$ is H\"older, then $J_2$ is also H\"older.
\end{remark}

\begin{proposition}
 If $J$ is the IRN of  a suitable probability $\mu$, not necessarily invariant, then,
 \begin{equation} \label{corri2} \mathcal{L}_{\log J}^* (\mu) =\mu
 \end{equation}
\end{proposition}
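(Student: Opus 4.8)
The plan is to test the claimed identity \eqref{corri2} against an arbitrary continuous observable $\varphi:\Omega\to\mathbb{R}$, since two finite measures that integrate every continuous function in the same way must coincide. By the definition \eqref{bact3} of the dual operator and the formula \eqref{Rulu} for $\mathcal{L}_{\log J}$ applied with $A=\log J$ (so that $e^{A}=J$), I would write
\begin{equation*}
\int \varphi \, d\,\mathcal{L}_{\log J}^{*}(\mu) = \int \mathcal{L}_{\log J}(\varphi)\, d\mu = \sum_{j=1}^d \int_\Omega (J\circ\tau_j)\,(\varphi\circ\tau_j)\, d\mu ,
\end{equation*}
so the whole statement reduces to showing that this last sum equals $\int_\Omega \varphi\, d\mu$.

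The key step is the branch-wise change of variables encoded in the IRN of $\mu$. For each fixed $j$ I would invoke the identity \eqref{weret1} (equivalently, the Radon--Nikodym relation \eqref{were}, using $J_j = d\mu/(d\mu\circ\sigma)|_{\overline{j}}$ and the fact that $(\mu\circ\sigma)|_{\overline{j}}$ is the pushforward $(\tau_j)_*\mu$), which for a general continuous $\varphi$ reads
\begin{equation*}
\int_\Omega (J\circ\tau_j)\,(\varphi\circ\tau_j)\, d\mu = \int_{\overline{j}} \varphi\, d\mu .
\end{equation*}
Here one must keep track that $\tau_j(x)\in\overline{j}$ for every $x$, so the left-hand integrand only sees the restriction $\varphi|_{\overline{j}}$, and correspondingly the right-hand side is the integral over the size-one cylinder $\overline{j}$ rather than over all of $\Omega$; this is exactly the content of \eqref{weret1} once the domain is recorded carefully.

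Summing over $j=1,\dots,d$ and using that the cylinders $\overline{1},\dots,\overline{d}$ partition $\Omega$, the right-hand side collapses to $\sum_{j=1}^d \int_{\overline{j}}\varphi\, d\mu = \int_\Omega \varphi\, d\mu$, which is precisely what is needed; since $\varphi$ is arbitrary, this gives $\mathcal{L}_{\log J}^{*}(\mu)=\mu$. I expect the only real obstacle to be bookkeeping rather than anything conceptual: one must apply the single-branch IRN relation with the correct domain of integration (the cylinder $\overline{j}$, not all of $\Omega$) so that the sum over branches telescopes through the partition. It is worth emphasizing that this argument never uses the normalization \eqref{Colo1}, i.e.\ it does not require $\mu$ to be $\sigma$-invariant: even when $\log J$ fails to be normalized and $\mathcal{L}_{\log J}(1)\neq 1$ pointwise, the partition identity still forces $\int \mathcal{L}_{\log J}(1)\, d\mu = \sum_{j=1}^d \mu(\overline{j}) = 1$, so $\mathcal{L}_{\log J}^{*}(\mu)$ remains a probability and equals $\mu$. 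This is exactly what makes the statement valid in the non-invariant case.
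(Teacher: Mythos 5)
Your proposal is correct and follows essentially the same route as the paper's proof: both test $\mathcal{L}_{\log J}^*(\mu)$ against an arbitrary continuous $\varphi$, reduce to the branch-wise identity \eqref{weret1} applied (with the correct domain bookkeeping) to $\varphi$ restricted to each cylinder $\overline{j}$, and sum over the partition $\overline{1},\dots,\overline{d}$. The closing observation that invariance, i.e.\ the normalization \eqref{Colo1}, is never used is accurate and matches the ``not necessarily invariant'' hypothesis of the statement.
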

\begin{proof}  We have to show that for any continuous function $\varphi:\Omega \to \mathbb{R}$ we get that
$$  \int \mathcal{L}_{\log J} (\varphi) (x) d \mu(x)  = \int \varphi d \mu.$$

For $j\in\{1,2,...,d\}$ we denote $\varphi_j$ the function $\varphi$ restricted to the cylinder $\overline{j}$ and zero outside $\overline{j}$ .
From \eqref{were1}  we get for each $j$ that
$$ \int_\Omega \varphi_j (\tau_j(x)) J(\tau_j(x)) d\mu(x)=\int_{\overline{j}} \varphi(y) d \mu(y)= \int_\Omega \varphi_j(y) d \mu(y).$$

Then,
$$  \int \mathcal{L}_{\log J} (\varphi) (x) d \mu(x) = \sum_{ j=1}^d \int \varphi_j (\tau_j(x)) J(\tau_j(x)) d \mu(x) =$$
$$       \sum_{ j=1}^d \int \varphi_j(y) d \mu(y)=   \sum_{ j=1}^d  \int_{\overline{j}} \varphi(y) d \mu(y) = \int \varphi(y) d \mu(y). $$
\end{proof}

Expression \eqref{corri2} means that $\mu$ is an eigenprobability for the Ruelle operator of the  potential $\log J$ (see \cite{PP}).

 The  {\bf Shannon-Kolmogorov entropy} of a $\sigma$-invariant probability $\mu$ which has a positive H\"older Jacobian $J$ (see Theorem 9.7.3  in \cite{VO}) is
 \begin{equation} \label{bact4} h(\mu)= - \int \log J d \mu.
 \end{equation}

For the classical definition of entropy for a general shift invariant probability  we refer the reader to \cite{Wal}, section 2.6.3 in \cite{Altaner1} or section 2.3 in \cite{ThoQi}. 

\smallskip
\begin{definition} \label{orte} If $\mu$ is not $\sigma$-invariant but has a continuous  IRN  $J$, it is natural to call $- \int \log J d \mu$ its entropy and denote this value by $h(\mu)$.
\end{definition}

\smallskip

From \eqref{bact4} we get that
the entropy of the invariant probability $\mu$ of Example \ref{exlo}   is
\begin{equation}\label{lkj} h(\mu)= -  \sum_{ i, j=1}^2  \pi_j P_{ij} \log P_{ij}.
\end{equation}




\medskip

Denote by $\mathcal{M}$ the set of $\sigma$-invariant probabilities.

\begin{definition} \label{fufu} Given a H\"older potential $A:\Omega \to \mathbb{R}$ we denote by $\mu_A$ the associated {\bf equilibrium state} for $A$, that is, the $\sigma$-invariant probability $\mu_A$ which maximizes
\begin{equation} \label{bact7} \mathfrak{P}(A)= \sup_{\mu \,\in \mathcal{M}}  \{ \int A d \mu + h(\mu)\}.
\end{equation}
\end{definition}

Given $A$, the value  $\mathfrak{P}(A)$ is called the {\it topological pressure} of $A$.
If $A$ is H\"older then the equilibrium state $\mu_A$ is unique (no phase transition).
In this case, the  Jacobian $J^A$ associated with $\mu_A$ is positive and H\"older (see expression \eqref{pois}). Therefore, any $\mu_A$ is suitable. According to our definitions, the concepts of equilibrium state and Gibbs state are equivalent. 

\smallskip

A possible meaning for the Second Law in Thermodynamic Formalism is the following:

\begin{definition} \label{first} First version - Given a H\"older Jacobian $J$ and the probability  $\mu_1$, we get  the IRN $J_2$ for the probability   
$ \mu_2=\mathcal{L}^*_{\log J} (\mu_1)$ (which is not invariant). Denote by  $\mu_3$ the equilibrium probability for the  new potential (a new energy Hamiltonian) $\log J_2$. We say the pair $(J,\mu_1)$ satisfies the Second Law (for irreversible systems)  if $h(\mu_1) \leq h(\mu_3)$.  
\end{definition}

Theorem \ref{yyt721} will show that, given any pair  $(J,\mu_1)$ as above,  the Second Law (first version) given by Definition \ref{first}  is satisfied. This result is in consonance with the reasoning   synthetically described by the  chain of steps described by expression (5.102) in page 139 on  Section 5.7 in \cite{Cat}.

\smallskip

\begin{remark} \label{uau}Given the Jacobian $J$, we claim that it is natural to  call  the law $\mu_1 \to \mathcal{L}^*_{\log J} (\mu_1)=\mu_2$
an  irreversible  thermodynamic operation. Theorem \ref{yyt721} together with Remark \ref{uut} present a solid argument in this favor.
From the proof of Theorem \ref{yyt721} (showing $h(\mu_1) \leq h(\mu_3)$) one can see that there are examples where $h(\mu_1) < h(\mu_3)$,  when     $\mu_3$ is  the equilibrium probability for  $\log J_2$; all this is   in accordance  with the ''Second Law of Thermodynamics  for irreversible systems'' point of view as described by  Definition \ref{first}.   
\end{remark}

\smallskip

If $J$ is a H\"older Jacobian of an invariant probability $\mu$, then from \cite{PP}
\begin{equation} \label{bact7}
\mathfrak{P}(\log J)=0.
\end{equation}

\begin{proposition} \label{oba} In the case $J_1$ is the H\"older IRN of a suitable probability $\mu_1$, then we also have that 
\begin{equation} \label{bact707}
\mathfrak{P}(\log J_1)=0.
\end{equation}

\end{proposition}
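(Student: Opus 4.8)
The plan is to reduce the claim to the standard identification of the topological pressure with the logarithm of the leading eigenvalue of the Ruelle operator, and then to show that in the present situation this eigenvalue must equal $1$. Since $\log J_1$ is H\"older, the Ruelle--Perron--Frobenius theorem (cited from \cite{PP}, and recalled around \eqref{Rululi}) provides a number $\lambda>0$ and a strictly positive H\"older function $\varphi:\Omega\to\mathbb{R}$ with $\mathcal{L}_{\log J_1}(\varphi)=\lambda\,\varphi$, together with the fundamental relation $\mathfrak{P}(\log J_1)=\log\lambda$. Thus the proposition becomes equivalent to proving $\lambda=1$.

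To pin down $\lambda$ I would exploit that $J_1$ is the IRN of $\mu_1$. By \eqref{corri2} this means exactly that $\mu_1$ is fixed by the dual operator, $\mathcal{L}_{\log J_1}^*(\mu_1)=\mu_1$, i.e. $\int \mathcal{L}_{\log J_1}(\psi)\,d\mu_1=\int \psi\,d\mu_1$ for every continuous $\psi$. Applying this identity to the eigenfunction $\varphi$ and using $\mathcal{L}_{\log J_1}(\varphi)=\lambda\varphi$ yields
\begin{equation*}
\lambda\int \varphi\,d\mu_1 \;=\; \int \mathcal{L}_{\log J_1}(\varphi)\,d\mu_1 \;=\; \int \varphi\,d\mu_1 .
\end{equation*}
Because $\varphi$ is continuous and strictly positive on the compact space $\Omega$, it is bounded below by a positive constant, so $\int\varphi\,d\mu_1>0$ (recall $\mu_1$ is a probability). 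Dividing forces $\lambda=1$, whence $\mathfrak{P}(\log J_1)=\log 1=0$.

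The only genuinely nontrivial ingredient is the relation $\mathfrak{P}(A)=\log\lambda_A$ between the pressure and the leading eigenvalue of $\mathcal{L}_A$ for H\"older $A$; this is the content of the Ruelle--Perron--Frobenius theorem quoted earlier, and it is consistent with \eqref{bact7}, which is precisely the special case in which $J_1$ is a genuine invariant Jacobian (there the constant $1$ is itself the positive eigenfunction with eigenvalue $1$, so one reads off $\lambda=1$ from the primal operator directly). The conceptual point of the present, non-invariant, case is that when $\mu_1$ fails to be invariant the constant function $1$ is no longer an eigenfunction of $\mathcal{L}_{\log J_1}$, so the eigenvalue cannot be read off on the primal side; instead it is read off on the dual side, where the IRN property guarantees that $\mu_1$ is an eigenmeasure of eigenvalue $1$, and pairing the eigenfunction against the eigenmeasure forces the primal and dual eigenvalues to coincide.
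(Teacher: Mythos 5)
Your proof is correct, and it reaches the conclusion by a noticeably more direct route than the paper. Both arguments share the same skeleton: invoke the Ruelle--Perron--Frobenius theorem for the H\"older potential $\log J_1$ to get the eigendata $(\lambda,\varphi)$ and the identity $\mathfrak{P}(\log J_1)=\log\lambda$, and then use the fixed-point property $\mathcal{L}_{\log J_1}^*(\mu_1)=\mu_1$ from \eqref{corri2} to force $\lambda=1$. The difference lies in how $\lambda=1$ is extracted. The paper passes to the normalized potential $\log J=\log J_1+\log\varphi-\log(\varphi\circ\sigma)-\log\lambda$, carries out a change-of-variables computation to show that the positive measure $\varphi\,\mu_1$ satisfies $\mathcal{L}_{\log J}^*(\varphi\,\mu_1)=\tfrac{1}{\lambda}\,\varphi\,\mu_1$, and then appeals to positivity together with an involution-kernel argument (Remark \ref{poispois}) to rule out $\lambda\neq 1$; this longer detour has the side benefit of identifying the equilibrium state of $\log J_1$ as $\varphi\,\mu_1$, an absolute-continuity fact the paper reuses later (e.g.\ in \eqref{rewiew} and in the entropy formula \eqref{rew}, and again in the proof of Theorem \ref{yyt721}). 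You instead pair the eigenfunction directly against the eigenmeasure, $\lambda\int\varphi\,d\mu_1=\int\mathcal{L}_{\log J_1}(\varphi)\,d\mu_1=\int\varphi\,d\mu_1$, and conclude from $\int\varphi\,d\mu_1>0$. This is a one-line argument that needs nothing beyond the definition \eqref{bact3} of the dual operator, and it cleanly isolates the only point where the IRN hypothesis enters; what it does not give you is the description of the equilibrium measure itself, so it proves exactly the stated proposition and nothing more.
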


\begin{proof}

Take in expression \eqref{Rululi} the potential $A= \log J_1$, then there exists $\varphi$ and $\lambda$, for  the Ruelle operator 
$\mathcal{L}_{\log J_1}$, such that,
$$ \log J = \log J_1 + \log \varphi - \log (\varphi \circ \sigma) - \log \lambda$$
satisfies $\mathfrak{P} (\log J)=0.$ It follows that $\mathfrak{P} (\log J_1)=\log \lambda.$

We assume that $\varphi$ satisfies the normalization condition  $\int \varphi d \mu_1=1.$

The equilibrium probability $\mu$ for $\log J$ satisfies $\mathcal{L}_{\log J}^* (\mu)= \mu$ (and it is the unique probability satisfying this property).

We claim that $\mu= \varphi \mu_1$.
Indeed, we will show that $\mathcal{L}_{\log J}^* (\varphi\, \mu_1)=  \varphi\, \mu_1$ and from this will follow that $\mu =\varphi\, \mu_1.$

Denote $\mu_2= \varphi \mu_1$.

Given any continuous function $g:\Omega \to \mathbb{R}$, as $\mathcal{L}_{\log J_1}^* (\mu_1)=\mu_1$ from \eqref{corri2}, we get 
$$ \int g\, d \mathcal{L}_{\log J}^*(\mu_2) = \int \sum_a g(a x) J (a x) d \mu_2 (x)=  \int \sum_a g(a x) J (a x) \varphi (x) d \mu_1 (x)=$$
$$  \int \sum_a g(a x) J_1 (a x) \frac{\varphi (a x)}{\varphi(x) \lambda} \varphi (x) d \mu_1(x)=  \int \sum_a g(a x) \varphi(a x) \frac{J_1(a x)}{ \lambda}  d \mu_1 (x)=$$
\begin{equation} \label{roro} \int  \frac{ g(x)\, \varphi (x)}{ \lambda}  d \mu_1 (x) = \int  \frac{ g(x))}{ \lambda} \varphi(x) d \mu_1 (x)= \int  \frac{ g(x)}{ \lambda}  d \mu_2 (x).
\end{equation}

This shows that $\mathcal{L}_{\log J}^* (\varphi\, \mu_1)  = \frac{1}{\lambda}  \varphi\, \mu_1.$ As $\varphi\, \mu_1$ is a measure
(not a signed measure) we get that $\lambda=1$ (for more details see Remark \ref{poispois} in Section \ref{pre}).

Therefore, 
\begin{equation} \label{rewiew}\mathcal{L}_{\log J}^* (\varphi\, \mu_1)=  \varphi\, \mu_1,\text{and}\, \mu\,\text{is absolutely continuous with respect to}\,\, \mu_1.
\end{equation}

Finally, from \eqref{rewiew}
$$ h(\mu) = - \int  \log J d \mu= -\int (\log J_1 + \log \varphi - \log (\varphi \circ \sigma))\, d \mu=$$
\begin{equation} \label{rew} = -\int \log J_1 \, d \mu = -  \int \log J_1 \, \,\varphi  \,d \mu_1.
\end{equation}
\end{proof}

The Ruelle operator is the main tool for showing important dynamical properties for
the equilibrium probability like ergodicity, exponential decay of correlation, etc. (in the case the potential $A$ is H\"older).
If $A$ is  of
H\"older class the supremum in  \eqref{bact7}  can be taken over $\mathcal{G}.$

We point out that the meaning of the word {\it equilibrium} in Definition \ref{fufu} is in  the sense of the Statistical Mechanics of the one-dimensional lattice (not exactly in the sense of equilibrium for gas thermodynamics).

\smallskip

\begin{example} \label{exlopi}  Consider a column stochastic matrix $P$ with  positive entries and  a vector of probability $z=(z_1,z_2)$. 
Denote by  $\mu^z$ is  the Markov probability on $\Omega=\{1,2\}^\mathbb{N}$ obtained from the matrix
$P$ and the   initial vector of probability  $z = (z_1,z_2)$, which we assume it is {\bf not} invariant for $P$.
 Then, the Markov probability $\mu^z$ is not invariant for $\sigma$.  Anyway,
$\mu^z( \overline{i j}) =  P_{j i}  z_i $.
 In this case  one can show that the IRN $J^z$ for  $\mu^z$ is constant in cylinders of size two. For $x$ in  the cylinder  $\overline{i j} $ we get that  $J^z(x) = \frac{ \,P_{j i}   z_i}{z_j}$.
The entropy of the noninvariant probability $\mu^z$ can be estimated by
$ h(\mu^z)= - \int \log J^z d \mu^z$.

\end{example}

\smallskip

The next definition describes another  meaning for the Second Law  in the setting of Thermodynamic Formalism.

\begin{definition} \label{klr} Second version -  Given a H\"older Jacobian $J$ and a suitable probability $\mu$, we say that the pair $(J,\mu)$ satisfies 
the Second Law of Thermodynamics (for irreversible systems) if the entropy increases with the thermodynamic operation $\mu \to\mathcal{L}_{\log J}^*  (\mu)$, that is,   $h( \mathcal{L}_{\log J}^*  (\mu) ) \geq h(\mu).$

\end{definition}

It is not true that any pair  $(J,\mu)$ satisfies 
the Second Law of Thermodynamics.
In Theorem \ref{yyt7} we will present sufficient conditions for the validity of the Second Law of Thermodynamics for the pair $(J,\mu)$.

\begin{remark} \label{uyt8} There are conceptual differences between the setting we consider here, where the operation is described by 
$\mu \to \mathcal{L}_{\log J}^*(\mu)$ ($J$  a H\"older Jacobian), and the action of a $n$ by $n$  matrix $P$ on vectors of probability $p$.
Different double stochastic matrices $P$ may leave the  maximum entropy probability vector $p=(1/d,1/d,...,1/d)$ invariant.  However, if $\mu_0$ is the measure of maximal entropy for the shift $\sigma:\Omega \to \Omega$, and $\mathcal{L}_{\log J}^*(\mu_0)= \mu_0$, then $J$ is constant and equal to $1/d$. There is no other 
continuous Jacobian  $J$ with this property.

\end{remark}

\medskip

 \section{Preliminaries on KL divergence} \label{KL}

If $P $ and $P' $ are probability distributions over the same finite sample set of events $U_i$, $i=1,...,d$,
then
\begin{equation} \label{lilu}  \log \frac{ P_i}{P_i'}
\end{equation}
is the bit-number necessary to change the probability $P_i'$ into $P_i$ by a message. This
message may for instance be based on the result of a new measurement. It yields a
correction of the probability $P_i'$ into $P_i$. The mean value of the bit-number equation above
formed with the weights of the "corrected" distribution P is
\begin{equation} \label{imp1} D_{KL} (P,P')=  \sum_i P_i \log \frac{ P_i}{P_i'}  \geq 0.
\end{equation}
The above concept  is known under different names in the literature: {\it  relative entropy, KL divergence, information gain}, {\it Kullback-Leibler information} or {\it cross-entropy} (see \cite{Sch}, \cite{Gray} or \cite{Shore}). This corresponds to the mean information we get
 - from going from $P'$ to $P$ - by the knowledge of the sample $U_i$, $i=1,...,d$.

If $D_{KL} (P,P')=0$, then $P=P'$.

\smallskip

In the continuous case, given the positive densities $\varphi_1(x)$ and $\varphi_2(x)$ on $\mathbb{R}$, the KL divergence is
\begin{equation} \label{imp131} D_{KL} (\varphi_1,\varphi_2)= \int \log \frac{ \varphi_1(x)  }{ \varphi_2(x) } \varphi_1(x)   d x.
\end{equation}

\begin{remark} \label{notcon} The KL divergence is not a  continuous function but it is a lower semi-continuous  function of pairs of probabilities (see section III in \cite{Posner}).
\end{remark}

The  {\it classical point of view} used when defining KL divergence by  \eqref{imp1} and \eqref{imp131} is not exactly  a dynamical point of view.

Markov Chains, stochastic matrices and non equilibrium Thermodynamics are considered in \cite{Penrose}, in Section 3 in \cite{Altaner1} and in  Chapter II in \cite{Liggett}.  
The law $\pi \to P(\pi)$ describes a certain type of random source (see section 3.6 in \cite{Gala} or \cite{Rached}). The action of a  doubly stochastic matrix makes distributions more
random (see expression  (2.11) in \cite{Sa}) because increases the entropy $h$.

 \smallskip

  An important ingredient in our reasoning - when considering an Ergodic version of the Thermodynamic of gases - is the concept of {\bf discrete time thermodynamical operation}. We will consider a specific one given by  $\mu_1 \to \mu_2 = \mathcal{L}_{\log J}^* (\mu_1)$, where $J$ is a H\"older  Jacobian.  From \eqref{corri} it is known that the action of the operator $\mathcal{L}_{\log J}^* $ takes  $\mu_1$ out  of equilibrium. The introduction of such thermodynamical operation in our  reasoning is in consonance with (4.5) in \cite{Sa} (see also (3.6) in \cite{Sch}) and the Second Law of Thermodynamics (to be discussed in Section \ref{pre}). Example \ref{exlo} in Section \ref{FR} and the Remark \ref{uau} strongly support this claim.
  
T.  Sagawa   in \cite{Sa} analyze properties of the so called {\it average entropy production}:

 \begin{equation} \label{kwe}  D_{KL} (\pi,\bar{\pi})     -  D_{KL} (P (\pi),\bar{\pi})\geq 0,
 \end{equation}
where $P(\bar{\pi})=\bar{\pi}$.

The action $\pi \to P (\pi)$ models a random source
and the expression
\eqref{kwe} is called the {\it data processing
inequality}  according to \cite{Sa}.

In  \cite{Sa} it is also shown in expression (2.10) that given probability vectors $p,q$

 \begin{equation} \label{kwee}  D_{KL} (p,q)     -  D_{KL} (P (p),P(q))\geq 0.
 \end{equation}
 
The proof that  \eqref{kwee}  is  bigger or equal to zero  follows from
  (2.10) in \cite{Sa}, Proposition 4.2 in Chapter II in \cite{Liggett}, or      (3.9) and (3.10) in \cite{Sch}.  We would like to analyze the 
  left-hand side of the above inequality in our setting.

 In the dynamical setting the  {\it Kullback-Leibler divergence} is given by
 \begin{equation} \label{imp2}    D_{K L}      (\mu_1,\mu_2) =h (\mu_1,\mu_2) = - \int \log J_2 d \mu_1 + \int  \log J_1 d \mu_1,
 \end{equation}
 where $\mu_2$ has IRN $J_2$ and $\mu_1$ has IRN $J_1$. 
 
 When $J_1$ is a H\"older Jacobian and $J_2$ a suitable H\"older IRN  one can show that 
 \eqref{imp2}   is greater or equal to zero. Indeed, from
 \eqref{bact707} we get that
$\mathfrak{P}(\log J_2)=0$. Then, it follows from \eqref{bact4} and \eqref{bact707} that
$$   \int \log J_2 d \mu_1 - \int  \log J_1 d \mu_1 =$$
\begin{equation} \label{impas2} \int \log J_2 d \mu_1 + h(\mu_1)     \leq\sup_{\mu \,\in \mathcal{M}}  \{ \int \log J_2 d \mu + h(\mu)\}    =  \mathfrak{P}(\log J_2)=0.
\end{equation}
 
 Note if $\log J_2$ is a H\"older Jacobian and $\mu_1$ is shift invariant, then by uniqueness of the equilibrium state, 
 \begin{equation} \label{impor}  D_{KL}(\mu_1,\mu_2)=0 \,\,
\Leftrightarrow\,\,
 \mu_1=\mu_2.
 \end{equation}

 For general properties of the  Kullback-Leibler divergence in the dynamic setting  see for instance \cite {Cha2} or
 \cite{LM} (when the alphabet is a compact metric space). The work \cite{ACR} describes the relation of  Kullback-Leibler divergence and \cite{LMMS} with 
 the classical concept of specific entropy in Statistical Mechanics as presented  in \cite{Geo} (see also Proposition 40 in \cite{LM}).

 \begin{remark}  We emphasize the fact that on the way to compare properties referring to concepts like \eqref {imp1} to analogous properties referring to the dynamic case \eqref {imp2} there is a notable difference: when $ \mu_1$ and $\mu_2$ are different H\"older Gibbs probabilities they
are singular to each other.

\end{remark}
 Note that when $\mu_2$ is the measure of maximal entropy we get that
 $$ h(\mu_1) =  \log d  - D_{K L}      (\mu_1,\mu_2)  .$$

\begin{remark} \label{poii} The main conceptual difference of the present  definition of KL divergence  when compared to the classic case considered in the literature (where the two probabilities are absolutely continuous with respect  to a given measure which was  fixed {\it a priori}, like the case expressed  by \eqref{imp131} and \eqref{imp1}) is that  in \eqref{imp2} we have to rely on the IRN of the two probabilities to estimate the KL divergence.  In this case, it is clear that the concept has a dynamic component.
 \end{remark}

 It is known that if $J_2$ (see for instance \cite{KLS}) is a H\"older Jacobian for the invariant probability $\mu_2$, then, given any probability $\mu_1$ on $\Omega$, we have that
  \begin{equation} \label{otimo1}\lim_{n\to \infty} (\mathcal{L}_{\log J_2}^*)^n  \,(\mu_1)=\mu_2.
 \end{equation}

 Here, among other things we are interested in  studying the expression
 $$   \text{ep}^{dyn} (\mu_1,\mu_2) :=   D_{K L}      (\mu_1,\mu_2)  -  D_{K L}      (\mathcal{L}_{\log J_2}^*(\mu_1),\mu_2),$$
which should be called the  {\it dynamical entropy production} for $J_2$.

Under the assumption that $\mu_2$ is $\sigma$-invariant with a H\"older Jacobian  $J_2$, but  $\mu_1$  not necessarily invariant,
in Theorem \ref{yyt99} in Section \ref{pre} we will show that
$\text{ep}^{dyn} (\mu_1,\mu_2) = 0.$ This equality can be interpreted as saying that
$\mathcal{L}_{\log J_2}^*$  maintains the  KL divergence when acting on the first variable.

The estimate of $   D_{K L}      (\mu_1,\mu_2)  -  D_{K L}      (\,(\mathcal{L}_{\log J_2}^*)^n(\mu_1),\mu_2)$ is considered in 
Theorem \ref{yyt99}.

When $J$ is a H\"older Jacobian, we will also analyze  the expression
 $$   \text{cg}^{dyn} (\mu_1,\mu_2) :=   D_{K L}      (\mu_1,\mu_2)  -  D_{K L}      (\mathcal{L}_{\log J}^*(\mu_1),
 \mathcal{L}_{\log J}^*(\mu_2)),$$
 which should be called the  {\it dynamical coarse-grained entropy production} for $J$.
In Theorem \ref{yyt} in Section \ref{pre} we will show that
$\text{cg}^{dyn} (\mu_1,\mu_2) = 0$, when $\mu_1$ and $\mu_2$ are suitable. We will not assume that either $\mu_1$ or $\mu_2$ is invariant.
 This equality can be interpreted as saying that
$\mathcal{L}_{\log J}^*$  maintains the KL divergence when acting on the both  variables. The equality $\text{cg}^{dyn} (\mu_1,\mu_2) = 0$ corresponds to expression (2.12) in page 25 in \cite{Sch} which considers
KL divergence and change of coordinates (see also (4.46) in \cite{Cat}).

\begin{remark} \label{ppo} From  the property $D_{K L}      (\mu_1,\mu_2)  =  D_{K L}      (\mathcal{L}_{\log J}^*(\mu_1),
\mathcal{L}_{\log J}^*(\mu_2))$
it follows that $D_{K L}      (\mu_1,\mu_2)  =  D_{K L}      (\,(\mathcal{L}_{\log J}^*)^n(\mu_1),(\mathcal{L}_{\log J}^*)^n(\mu_2\,))$. From this last expression, taking the limit in $n \to \infty$, it might seem to the reader that \eqref{impor}  and \eqref{otimo1}  could lead to a contradiction. But this does not really happen because the function $(\rho_1,\rho_2) \to D_{KL} (\rho_1,\rho_2)$ is not continuous (see Remark \ref{notcon})
\end{remark}



\section{The action of the dual of the Ruelle operator and the Second Law} \label{pre}

By definition the KL divergence of the pair $(\mu_1,\mu_2)$ is
\begin{equation} \label{imp244}    D_{K L}      (\mu_1,\mu_2) =h (\mu_1,\mu_2) = - \int \log J_2 d \mu_1 + \int  \log J_1 d \mu_1.
 \end{equation}

According to \eqref{impas2}, when $J_1$ is a H\"older Jacobian and $J_2$ a suitable H\"older IRN for $\mu_2$ we get that  \eqref{imp244}   is non negative.

In this section, we will perform a certain discrete-time thermodynamic operation on the system in equilibrium and we analyze the change in KL divergence and also in entropy. We only consider probabilities on  $\Omega $ that are suitable. First, we want to investigate the change of KL divergence under the action of the dual of the Ruelle operator.

One of our main results in this section is:

\begin{theorem}  \label{yyt}   Assume  that $J$ is a H\"older Jacobian   and  $\mu_1$  and  $\mu_2$ are suitable probabilities with continuous IRN, respectively, $J_1$ and $J_2$. Then,
\begin{equation}  \label{vare12}    D_{K L}      (\mu_1,\mu_2)  -  D_{K L}      (\mathcal{L}_{\log J}^*(\mu_1),\mathcal{L}_{\log J}^*(\mu_2) )=0.
\end{equation}

\end{theorem}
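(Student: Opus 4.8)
The plan is to reduce everything to the behaviour of the ratio of inverse Radon--Nikodym derivatives under $\mathcal{L}_{\log J}^*$. Writing $\nu_i=\mathcal{L}_{\log J}^*(\mu_i)$, by Remark~\ref{supe} each $\nu_i$ is again suitable with a continuous IRN, say $\widetilde J_i$. Since by \eqref{imp244} one has $D_{KL}(\mu_1,\mu_2)=\int \log(J_1/J_2)\,d\mu_1$ and likewise $D_{KL}(\nu_1,\nu_2)=\int \log(\widetilde J_1/\widetilde J_2)\,d\nu_1$, the theorem follows once I establish two facts: (i) $\widetilde J_1/\widetilde J_2=(J_1/J_2)\circ\sigma$, and (ii) $\int (\psi\circ\sigma)\,d\nu_1=\int \psi\,d\mu_1$ for every continuous $\psi$.

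The heart of the argument is an explicit formula for the IRN of $\mathcal{L}_{\log J}^*(\mu)$ when $\mu$ is suitable with IRN $J_\mu$. I claim that
\[
\widetilde J \;=\; J\cdot\frac{J_\mu\circ\sigma}{J\circ\sigma}
\]
is the IRN of $\nu=\mathcal{L}_{\log J}^*(\mu)$. This function is positive and continuous (and H\"older if $J,J_\mu$ are), so by uniqueness of the IRN it suffices to verify the eigen-equation \eqref{corri2}, namely $\mathcal{L}_{\log \widetilde J}^*(\nu)=\nu$. Unfolding both sides through the definition \eqref{bact3} of the dual and through $\nu=\mathcal{L}_{\log J}^*(\mu)$, and inserting one extra step via $\mathcal{L}_{\log J_\mu}^*(\mu)=\mu$ (which is \eqref{corri2} applied to $\mu$ itself), the identity reduces to the operator equality $\mathcal{L}_{\log J_\mu}\,\mathcal{L}_{\log J}=\mathcal{L}_{\log J}\,\mathcal{L}_{\log \widetilde J}$. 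Expanding a twofold composition as $\mathcal{L}_A\mathcal{L}_B(g)(y)=\sum_{a,b}e^{A(ay)+B(bay)}g(bay)$ and using $\sigma(bay)=ay$ (so that $\widetilde J(bay)=J(bay)\,J_\mu(ay)/J(ay)$), both sides collapse to the common term $J_\mu(ay)\,J(bay)\,g(bay)$; this proves the formula.

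With the formula in hand, applying it to $\mu_1$ and $\mu_2$ gives $\widetilde J_1=J\,(J_1\circ\sigma)/(J\circ\sigma)$ and $\widetilde J_2=J\,(J_2\circ\sigma)/(J\circ\sigma)$, whence $\widetilde J_1/\widetilde J_2=(J_1/J_2)\circ\sigma$, which is claim (i); note the common factor $J/(J\circ\sigma)$ cancels, so invariance of $\mu_1,\mu_2$ is never needed. For claim (ii), the definition \eqref{bact3} of the dual together with the identity \eqref{Colo}, $\mathcal{L}_{\log J}(\psi\circ\sigma)=\psi$ (valid because $J$ is a genuine Jacobian), gives $\int (\psi\circ\sigma)\,d\nu_1=\int \mathcal{L}_{\log J}(\psi\circ\sigma)\,d\mu_1=\int \psi\,d\mu_1$. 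Taking $\psi=\log(J_1/J_2)$, which is continuous and bounded since $J_1,J_2$ are continuous and bounded away from zero, and combining (i) and (ii), one obtains $D_{KL}(\nu_1,\nu_2)=\int\big(\log(J_1/J_2)\big)\circ\sigma\,d\nu_1=\int\log(J_1/J_2)\,d\mu_1=D_{KL}(\mu_1,\mu_2)$, which is exactly \eqref{vare12}.

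I expect the IRN transformation formula of the second paragraph to be the main obstacle; everything afterwards is a one-line computation resting on \eqref{Colo} and on the cancellation of the factor $J/(J\circ\sigma)$. Two points deserve care in executing it: first, that $\log J$ is genuinely normalized so that \eqref{Colo} applies (this is precisely where the hypothesis that $J$ is a H\"older \emph{Jacobian}, rather than a mere IRN, enters); and second, that the candidate $\widetilde J$ is positive, continuous and bounded away from zero, so that it qualifies as an IRN and the integrals of its logarithm converge.
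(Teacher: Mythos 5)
Your proposal is correct and follows essentially the same route as the paper: the key step in both is the transformation formula $\widetilde J = J\cdot (J_\mu\circ\sigma)/(J\circ\sigma)$ for the IRN of $\mathcal{L}_{\log J}^*(\mu)$ (the paper's Proposition~\ref{cont}, formula \eqref{loloqati}), after which the paper's direct computation of the difference of divergences collapses the integrand to $\log 1$, exactly as your telescoping of $\widetilde J_1/\widetilde J_2=(J_1/J_2)\circ\sigma$ combined with \eqref{Colo} does. Your verification of the formula via the operator identity $\mathcal{L}_{\log J_\mu}\mathcal{L}_{\log J}=\mathcal{L}_{\log J}\mathcal{L}_{\log \widetilde J}$ is a harmless repackaging of the paper's cylinder-by-cylinder check, both resting on $\mathcal{L}_{\log J_1}^*(\mu_1)=\mu_1$.
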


\smallskip

This result will be proved later.

\smallskip

The law $\mu_1 \to \mathcal{L}_{\log J}^*(\mu_1)$ can be seen as a random source of information (see section 3.6 in \cite{Gala} for the Markov chain case). 

Expression (2.12) in \cite{Sch} claims invariance of KL divergence under change of coordinates. Expression \eqref{vare12}  describes a kind of change of coordinates.

A natural question is the following: given $J$ and $\mu_1$, we will get via the thermodynamic operation the IRN $J_2$ for the probability   
$ \mu_2=\mathcal{L}^*_{\log J} (\mu_1)$ (which is not invariant, that is, not in equilibrium).  In this way, we get a new potential (a new energy Hamiltonian) $\log J_2$. Then, we would like to know the properties of the probability $\mu_3$ which is the equilibrium probability for the potential $\log J_2$. Theorem \ref{yyt721}  will address this question: $h(\mu_1) \leq h(\mu_3)$. This will mean to show the Second Law in the sense of the  first version (see Definition \ref{first}).

We will also investigate conditions for the increase of Kolmogorov-Shannon entropy in Theorem \ref{yyt7}. This will provide conditions for  the Second Law in the sense of the second version (see Definition \ref{klr}). In Example \ref{exx} the entropy strictly increases. Expression \eqref{vare}  means the  increase of the  uncertainty of information when the source is applied.

\begin{proposition}  \label{cont} Assume  that $J$ is the H\"older Jacobian of an invariant probability $\mu$ and the suitable probability $\mu_1$  has continuous IRN $J_1$. Then,
\begin{equation}\label{loloqati} J_3(z) = \frac{J_1(\sigma(z))\,J(z)}{J(\sigma(z))},
 \end{equation}
is the continuous  IRN of the suitable probability $\mu_3= \mathcal{L}^*_{\log J} (\mu_1).$ If $J$ and $J_1$ are H\"older, then $J_3$ is also H\"older.
\end{proposition}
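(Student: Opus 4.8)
The plan is to identify $J_3$ as the IRN of $\mu_3:=\mathcal{L}^*_{\log J}(\mu_1)$ by checking the defining branchwise Radon--Nikodym relation \eqref{weret1}. Concretely, for each $j\in\{1,\dots,d\}$ and each continuous $\varphi$ I want to verify
$$\int_\Omega \varphi(\tau_j(x))\,J_3(\tau_j(x))\,d\mu_3(x)=\int_{\overline{j}}\varphi\,d\mu_3,$$
since a continuous positive function satisfying this for all $j$ is the IRN of $\mu_3$; exhibiting such a function simultaneously shows that $\mu_3$ meets the A assumption and is suitable. Equivalently, and more conceptually, this amounts to the eigenmeasure identity $\mathcal{L}^*_{\log J_3}(\mu_3)=\mu_3$, which by \eqref{corri2} is exactly the statement that $J_3$ is the IRN of $\mu_3$.

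First I would replace every integral against $\mu_3$ by one against $\mu_1$ through the definition \eqref{bact3}, so that $\int g\,d\mu_3=\int \sum_{a} J(ax)\,g(ax)\,d\mu_1(x)$. The key algebraic input is read off from the candidate formula: because $\sigma(\tau_j(ax))=ax$, one has
$$J_3(\tau_j(ax))=\frac{J_1(ax)\,J(\tau_j(ax))}{J(ax)}.$$
Substituting this into the transported integral, the factor $J(ax)$ produced by $\mathcal{L}_{\log J}$ cancels the denominator of $J_3$ --- this telescoping is the heart of the argument --- and the double sum collapses into a single $J_1$-transfer of the function $w\mapsto J(\tau_j(w))\varphi(\tau_j(w))$. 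At the operator level this is the identity $\mathcal{L}_{\log J}\circ\mathcal{L}_{\log J_3}=\mathcal{L}_{\log J_1}\circ\mathcal{L}_{\log J}$. Since $J_1$ is the IRN of $\mu_1$, the eigenmeasure relation \eqref{corri2} gives $\int \mathcal{L}_{\log J_1}(h)\,d\mu_1=\int h\,d\mu_1$, and the whole expression reduces to $\int J(\tau_j(x))\varphi(\tau_j(x))\,d\mu_1(x)$; this equals $\int_{\overline{j}}\varphi\,d\mu_3$ because $\mathcal{L}_{\log J}(\varphi\,\mathbf{1}_{\overline{j}})(x)=J(\tau_j(x))\varphi(\tau_j(x))$ (only the $a=j$ branch survives). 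This closes the verification.

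It then remains to check regularity. Positivity of $J_3$ is clear from positivity of $J$ and $J_1$. For continuity I would use that $J$ is bounded away from zero (Remark \ref{rtrt}), so $J/(J\circ\sigma)$ is continuous, whence $J_3=(J_1\circ\sigma)\,J/(J\circ\sigma)$ is continuous; if moreover $J$ and $J_1$ are H\"older, then $J\circ\sigma$ and $J_1\circ\sigma$ are H\"older because $\sigma$ is Lipschitz, and products and quotients of H\"older functions bounded away from zero remain H\"older, so $J_3$ is H\"older.

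The main obstacle I anticipate is organizational rather than conceptual: keeping the compositions with $\sigma$ and the inverse branches $\tau_j$ straight so that the cancellation of $J(ax)$ is carried out correctly, and making sure the single branchwise computation legitimately yields both the explicit formula for the IRN and the suitability of $\mu_3$. Once the identity $J_3(\tau_j(ax))=J_1(ax)J(\tau_j(ax))/J(ax)$ is secured, everything reduces to the eigenmeasure property \eqref{corri2} of $\mu_1$.
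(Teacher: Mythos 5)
Your proposal is correct and follows essentially the same route as the paper's proof: transport all $\mu_3$-integrals to $\mu_1$ via $\mathcal{L}_{\log J}^*$, use the identity $J_3(\tau_j(ax))=J_1(ax)J(\tau_j(ax))/J(ax)$ to cancel the factor $J(ax)$, and then invoke $\mathcal{L}_{\log J_1}^*(\mu_1)=\mu_1$ from \eqref{corri2} to collapse the remaining sum, with the same closing regularity argument. The operator identity $\mathcal{L}_{\log J}\circ\mathcal{L}_{\log J_3}=\mathcal{L}_{\log J_1}\circ\mathcal{L}_{\log J}$ you mention is a nice conceptual repackaging of the same cancellation, but it is not a different argument.
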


\begin{proof}  Suppose $\varphi$ is zero outside the cylinder $\overline{j}$.

We have to show that (see \eqref{weret1})
\begin{equation}\label{lolopi}\int \varphi (y) d \mu_3 (y) = \int \varphi (j y) J_3 ( j y) d \mu_3(y).
\end{equation}

 Note that
$$ \int \varphi (y) d \mu_3 (y) =\int \varphi (y) d \mathcal{L}_{\log J}^*(\mu_1)(y) =$$
\begin{equation}\label{lolopu}\int \sum_a  J(a x) \varphi ( a x) d \mu_1 (x) = \int   J(j x) \varphi ( j x) d \mu_1 (x).
\end{equation}

On the other hand, from \eqref{loloqati} and \eqref{corri2} 
\begin{equation}\label{lolopulou}  \int \varphi (j y) J_3 ( j y) d \mu_3(y)= \int \sum_a  J (a x)  \varphi (j  a x) J_3 ( j a x) d \mu_1(x) =\end{equation}

\begin{equation}\label{lolopulou1} \int \sum_a  J (a x)  \varphi (j  a x) \frac{J_1 (a x) J(j a x)}{J (a x)   }   d \mu_1(x) =
\end{equation}
\begin{equation}\label{lolopulou2}   \int \sum_a  J_1 (a x)  \varphi (j  a x) J(j a x)  d \mu_1(x) =    \int  J(j  x) \varphi (j   x) d \mu_1(x) .
\end{equation}

In the last equality we use the  property $\mathcal{L}_{\log J_1}^{*} (\mu_1)=\mu_1.$

Therefore,  \eqref{lolopi} is true. $J_3$ is positive and  continuous because is the composition, product, and quotient of {\bf positive} continuous functions.

If $J$ and $J_1$ are H\"older continuous, then 
$J_3$ is   H\"older continuous because is the composition, product, and quotient of H\"older continuous {\bf positive} functions.
\end{proof}

Expression \eqref{loloqati} corresponds in some sense to the equality condition  after expression  (4.3) in Proposition 4.2 in Chapter II in  \cite{Liggett}.

Now we will present the proof of Theorem \ref{yyt}.

\begin{proof}  
We want to show  that
$$D_{K L}      (\mu_1,\mu_2)  -  D_{K L}      (\mathcal{L}_{\log J}^*(\mu_1),\mathcal{L}_{\log J}^*(\mu_2) ) =   0.$$

Indeed, denote $J_3$ the continuous IRN of $\mu_3=\mathcal{L}_{\log J}^*(\mu_1)$,  $J_4$ the continuous IRN of $\mu_4=\mathcal{L}_{\log J}^*(\mu_2)$, $J_1$ the IRN of $\mu_1$ and $J_2$ the IRN of $\mu_2$.

Then, from \eqref{loloqati}
$$    D_{K L}      (\mu_1,\mu_2)  -  D_{K L}      (\mathcal{L}_{\log J}^*(\mu_1),\mathcal{L}_{\log J}^*(\mu_2) )  = $$
$$[- \int \log J_2 d \mu_1 + \int  \log J_1 d \mu_1 ] - [- \int \log J_4 d \mathcal{L}_{\log J}^*(\mu_1) + \int  \log J_3 \,d \mathcal{L}_{\log J}^*(\mu_1)]=$$
$$- \int \log J_2 d \mu_1 + \int  \log J_1 d \mu_1  +$$
$$ \int \sum_a J(a x) \log J_4 (a x)  d \mu_1 (x) - \int   \sum_a J(a x)  \log J_3 (ax)\,d \mu_1 (x)=$$
$$- \int \sum_a J(a x)   \log J_2 (x)d \mu_1(x) + \int  \sum_a J(a x)  \log J_1 (x) d \mu_1(x)  + $$
$$\int \sum_a J(a x) \log J_4 (a x)  d \mu_1(x) - \int   \sum_a J(a x)  \log J_3 (ax)\,d \mu_1 (x)=$$
$$  \int \sum_a J(a x) \log   \frac{ J_1(x)\, J_4(a x)}{J_2(x) \, J_3(a x)}   d \mu_1(x)=$$
$$  \int \sum_a J(a x) \log   \frac{ J_1(x)\, \frac{J_2 (x)\,J(a x)}{J(x) }}{J_2(x) \, \frac{J_1 (x)\,J(a x)}{J(x) }}   d \mu_1(x)=$$ 
$$  \int \sum_a J(a x) \log   \frac{ J_1(x)\, J_2 (x)}{J_2(x) \, J_1 (x)}   d \mu_1(x)=0.$$

\end{proof}

\smallskip

The claim of the above Theorem means that the KL-divergence does not change  when $\mathcal{L}_{\log J_2}^*$  acts on pairs of probabilities.

\begin{theorem}  \label{yyt99}   Assume  that $J_2$ is a H\"older Jacobian  for the Gibbs probability $\mu_2$  and  $\mu_1$ has continuous  IRN $J_1$. Then,
\begin{equation}  \label{vare}    D_{K L}      (\mu_1,\mu_2)  -  D_{K L}      (\mathcal{L}_{\log J_2}^*(\mu_1),\mu_2)=0.
\end{equation}
\end{theorem}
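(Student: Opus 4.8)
The plan is to recognize that Theorem \ref{yyt99} is essentially the special case of Theorem \ref{yyt} in which the Jacobian driving the operator coincides with the Jacobian of the second probability. The decisive observation is that $J_2$ is the Jacobian of the $\sigma$-invariant Gibbs probability $\mu_2$, so by the eigenprobability identity \eqref{corri2} one has $\mathcal{L}_{\log J_2}^*(\mu_2) = \mu_2$. Hence the fixed measure sitting in the second slot of $D_{KL}(\mathcal{L}_{\log J_2}^*(\mu_1),\mu_2)$ can be rewritten, at no cost, as $\mathcal{L}_{\log J_2}^*(\mu_2)$.

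Once this is done, I would apply Theorem \ref{yyt} with the choice $J := J_2$ to the pair $(\mu_1,\mu_2)$, obtaining
$$D_{KL}(\mu_1,\mu_2) = D_{KL}\big(\mathcal{L}_{\log J_2}^*(\mu_1),\mathcal{L}_{\log J_2}^*(\mu_2)\big) = D_{KL}\big(\mathcal{L}_{\log J_2}^*(\mu_1),\mu_2\big),$$
which is exactly \eqref{vare}. Before invoking Theorem \ref{yyt} I would check that its hypotheses are in force: $J_2$ is H\"older by assumption; $\mu_2$ is Gibbs, hence suitable with H\"older IRN $J_2$; and $\mu_1$ is suitable with continuous IRN $J_1$. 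The image $\mathcal{L}_{\log J_2}^*(\mu_1)$ is again suitable with continuous IRN by Remark \ref{supe} (equivalently Proposition \ref{cont}), so every object entering the two KL divergences is well defined.

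A self-contained direct computation is also available and is perhaps more transparent. Writing $\mu_3 = \mathcal{L}_{\log J_2}^*(\mu_1)$, Proposition \ref{cont} (with $J=J_2$) gives its IRN explicitly as $J_3(z) = J_1(\sigma(z))\,J_2(z)/J_2(\sigma(z))$, so that on a preimage, where $\sigma(ax)=x$, one has the telescoping identity $\log J_3(ax) = \log J_1(x) + \log J_2(ax) - \log J_2(x)$. Expanding $D_{KL}(\mu_3,\mu_2) = -\int \log J_2\, d\mu_3 + \int \log J_3\, d\mu_3$ through the pushforward rule $\int f\, d\mu_3 = \int \sum_a J_2(ax) f(ax)\, d\mu_1(x)$ makes the $\log J_2(ax)$ terms cancel, and the normalization $\sum_a J_2(ax)=1$ from \eqref{Colo1} collapses the remainder to $\int(\log J_1 - \log J_2)\, d\mu_1 = D_{KL}(\mu_1,\mu_2)$. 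The only point requiring care, and the main thing to keep honest, is that $\mu_1$ is \emph{not} assumed $\sigma$-invariant: the argument must use only the pushforward formula and $\sum_a J_2(ax)=1$, and never the invariance of $\mu_1$, whose own eigenprobability property $\mathcal{L}_{\log J_1}^*(\mu_1)=\mu_1$ enters solely through the derivation of $J_3$ in Proposition \ref{cont}.
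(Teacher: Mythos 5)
Your proposal is correct and follows exactly the paper's own argument: the paper proves Theorem \ref{yyt99} precisely by taking $J=J_2$ in Theorem \ref{yyt} and using $\mathcal{L}_{\log J_2}^*(\mu_2)=\mu_2$ from \eqref{corri2}. Your hypothesis-checking and the supplementary direct computation via Proposition \ref{cont} are sound additions, but the core route is the same.
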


\begin{proof} In Theorem \ref{yyt} take $J=J_2$, then, $\mathcal{L}_{\log J_2}^*(\mu_2)=\mu_2$ and the claim follows.

\end{proof}

\begin{theorem}  \label{yyt99}   Assume  that $J$ is the H\"older Jacobian  for the Gibbs probability $\mu$  and  $\mu_1$ has continuous  IRN $J_1$. Denote for each $n \in \mathbb{N}$, $\mu_n = (\,(\mathcal{L}_{\log J})^*\,)^n(\mu_0)$ and $J_n$ the corresponding IRN.

Then, for each $n$
\begin{equation}  \label{vareta}         D_{K L}      (\mu_n,\mu)=-\int  \log J (\sigma^n (z))  \, d \mu_n + \int \,\log   J_0 (\sigma^n(z))\,d \mu_n.
\end{equation}

\end{theorem}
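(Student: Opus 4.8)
The plan is to reduce the whole statement to an explicit closed form for the IRN $J_n$ of $\mu_n=(\mathcal{L}_{\log J}^*)^n(\mu_0)$ expressed through the fixed Jacobian $J$ and the initial IRN $J_0$, and then substitute that form into the definition \eqref{imp244} of the KL divergence. The engine of the computation is the one-step recursion supplied by Proposition \ref{cont}: since each $\mu_{k+1}=\mathcal{L}_{\log J}^*(\mu_k)$, expression \eqref{loloqati} gives
$$J_{k+1}(z)=\frac{J_k(\sigma(z))\,J(z)}{J(\sigma(z))}$$
for every $k\geq 0$.

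First I would establish, by induction on $n$, the closed form
$$J_n(z)=\frac{J_0(\sigma^n(z))\,J(z)}{J(\sigma^n(z))}.$$
The base case $n=0$ is immediate. For the inductive step one substitutes the hypothesis for $J_n$ into the recursion for $J_{n+1}$: the factor $J(\sigma(z))$ in the denominator of the recursion cancels against the factor produced in the numerator of $J_n(\sigma(z))$, and the remaining shifts compose to $\sigma^{n+1}$. This telescoping is the only genuine calculation in the argument, and it is where care is needed, since the Jacobian $J$ of the invariant reference measure $\mu$ stays fixed along the orbit, whereas the IRN of the (noninvariant) iterates $\mu_k$ changes at each step; one must keep the two roles of $J$ and $J_k$ distinct when iterating \eqref{loloqati}.

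Taking logarithms in the closed form yields
$$\log J_n(z)=\log J_0(\sigma^n(z))+\log J(z)-\log J(\sigma^n(z)).$$
I would then insert this, together with the IRN $J$ of $\mu$, into $D_{KL}(\mu_n,\mu)=-\int\log J\,d\mu_n+\int\log J_n\,d\mu_n$. The two copies of $\int\log J(z)\,d\mu_n$ cancel, leaving exactly
$$D_{KL}(\mu_n,\mu)=-\int\log J(\sigma^n(z))\,d\mu_n+\int\log J_0(\sigma^n(z))\,d\mu_n,$$
which is the claimed identity \eqref{vareta}. I expect no serious obstacle: all measures involved are suitable with continuous (indeed H\"older, by Proposition \ref{cont}) IRN, so every integrand is continuous and bounded, the integrals are finite, and no delicate convergence or semicontinuity issue arises. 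The entire proof is the induction followed by the cancellation.
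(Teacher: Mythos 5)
Your proposal is correct and follows essentially the same route as the paper: the paper also derives the closed form $J_n(z)=J_0(\sigma^n(z))\,J(z)/J(\sigma^n(z))$ by induction from the one-step recursion \eqref{loloqati} (after writing out the cases $n=1,2$ explicitly) and then cancels the two copies of $\int\log J\,d\mu_n$ in the definition of $D_{KL}(\mu_n,\mu)$. The only difference is cosmetic — you start the induction at $n=0$ rather than verifying $n=1,2$ first.
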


\begin{proof} From \eqref{loloqati}  we get 
$$J_1(z)= \frac{J_0(\sigma(z))\,J(z)}{J(\sigma(z))}$$ and, therefore,
$$  D_{K L}      (\mu_1,\mu)=   D_{K L}      ((\mathcal{L}_{\log J})^*\,(\mu_0),\mu)=- \int \log J d \mu_1 + \int \log J_1 d \mu_1 =$$
$$- \int \log J(z) d \mu_1(z) + \int \log   \frac{J_0(\sigma(z))\,J(z)}{J(\sigma(z))}  d \mu_1 (z)=$$ 
$$\int [\,\log   J_0 (\sigma(z))\,  - \log J(\sigma(z)) \,] d \mu_1 .$$

This was the case $n=1$.

When $n=2$, we get  from \eqref{loloqati} 
$$  D_{K L}      (\mu_2,\mu)=   D_{K L}      ((\mathcal{L}_{\log J})^*\,(\mu_0),\mu)=- \int \log J d \mu_2 + \int \log J_2 d \mu_2 =$$
$$- \int \log J(z) d \mu_2 (z) + \int \log   \frac{J_1(\sigma(z))\,J(z)}{J(\sigma(z))}  d \mu_2 (z)=$$ 
$$\int [\,\log   J_1(\sigma(z))\,  - \log J(\sigma(z)) \,] d \mu_2=$$
$$\int [\,\log   J_0 (\sigma^2(z))\,+ \log J (\sigma(z)  - \log J (\sigma^2 (z))  - \log J(\sigma(z)) \,] d \mu_2 =$$
$$\int [\,\log   J_0 (\sigma^2(z))\, - \log J (\sigma^2 (z))  \,] d \mu_2.$$

If the $J_n$ is the RNI of  $((\mathcal{L}_{\log J})^*)^n\,(\mu_0)$, then, from  \eqref{loloqati} we get 
\begin{equation}
J_n(z) = \frac{J_{n-1} (\sigma(z))\,J(z)}{J(\sigma(z))}.
\end{equation}

We claim that 
\begin{equation} \label{eeh}
J_n(z) = \frac{J_0 (\sigma^{n}(z))\,\,J(z)\,}{J(\sigma^{n} (z))}.
\end{equation}

The proof is by induction.

Indeed, for $n=1, 2$ is true, and if $J_{n-1} (z) = \frac{J_0 (\sigma^{n-1}(z))\,\,J(z)\,}{J(\sigma^{n-1} (z))}$,
then,

\begin{equation}
J_n(z) = \frac{J_{n-1} (\sigma(z))\,J(z)}{J(\sigma(z))}=  \frac{ \frac{J_0 (\sigma^{n}(z))\,\,J(\sigma(z))\,}{J(\sigma^{n} (z))}  }{J(\sigma(z)) }\, J(z)= \frac{J_0 (\sigma^{n}(z))\,\,J(z)\,}{J(\sigma^{n} (z))} .
\end{equation}

Therefore,
$$D_{K L}      (\mu_n,\mu) = - \int \log J d \mu_n + \int \log J_n d \mu_n =$$
$$\int [\,\log   J_0 (\sigma^n(z))\, - \log J (\sigma^n (z))  \,] d \mu_n.$$
\end{proof}


\smallskip

Note that if $J$ and the IRN  $J_1$ of $\mu_1$ are both H\"older, then, $J_2$ is H\"older (we assumed that all probabilities are suitable). The next theorem  describes a reasoning which is similar to the one described by 
Remark \ref{uut} in Section \ref{gaz} (the setting of Thermodynamic of gases).

\smallskip

\begin{theorem}  \label{yyt721} Assume that $\mu_1$ is a probability  with IRN  $J_1$, $J$ a  Jacobian (of a $\sigma$-invariant     probability $\mu$),  $J_2$  the H\"older IRN of $ \mu_2=\mathcal{L}^*_{\log J} (\mu_1)$, and $\mu_3$ the Gibbs equilibrium probability for the potential $\log J_2$. Then, $\mu_3$ is absolutely continuous with respect to $\mu_2$, more precisely, $\mu_3 = \varphi \mu_2$, where $\varphi$ is the main eigenfunction of the Ruelle operator 
$\mathcal{L}_{\log J_2}$. In addition,
the topological pressure of $\log J_2$ is equal to zero and
\begin{equation} \label{bela} h(\mu_3)=- \int \log J_2 d \mu_3 = -\int \log J_1 d \mu_3 =-\int \log J_2 \,\,\varphi \,d \mu_2.
\end{equation}

Moreover, $h(\mu_1) \leq h(\mu_3)$.

\end{theorem}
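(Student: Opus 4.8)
The plan is to establish the three distinct claims in the order they are stated: first the absolute continuity $\mu_3 = \varphi\mu_2$ and the identification of $\varphi$ as the main eigenfunction, then the vanishing of the topological pressure of $\log J_2$ together with the entropy formula \eqref{bela}, and finally the inequality $h(\mu_1) \leq h(\mu_3)$. The first two claims are essentially a reapplication of Proposition \ref{oba}, since $J_2$ is the H\"older IRN of the suitable (but non-invariant) probability $\mu_2$. By that proposition, $\mathfrak{P}(\log J_2) = 0$, and the equilibrium probability $\mu_3$ for $\log J_2$ satisfies $\mathcal{L}_{\log J_2}^*(\mu_3) = \mu_3$ with $\mu_3 = \varphi\mu_2$, where $\varphi$ is the normalized main eigenfunction of $\mathcal{L}_{\log J_2}$. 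The entropy chain in \eqref{bela} then follows exactly as in \eqref{rew}: since $\log J_2 = \log J_1 + \log\varphi - \log(\varphi\circ\sigma) - \log\lambda$ with $\lambda = 1$, the coboundary terms integrate to zero against the invariant probability $\mu_3$, giving $h(\mu_3) = -\int \log J_2\, d\mu_3 = -\int \log J_1\, d\mu_3 = -\int \log J_2\,\varphi\, d\mu_2$.

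\medskip

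The substantive part is the final inequality $h(\mu_1) \leq h(\mu_3)$, and this is where I would focus the argument. I would begin from the entropy expression $h(\mu_1) = -\int \log J_1\, d\mu_1$ (Definition \ref{orte}) and $h(\mu_3) = -\int \log J_1\, d\mu_3$ from \eqref{bela}, so that
\begin{equation*}
h(\mu_3) - h(\mu_1) = -\int \log J_1\, d\mu_3 + \int \log J_1\, d\mu_1.
\end{equation*}
The natural route is to rewrite the right-hand side as a KL divergence, exploiting the fact that $\mathfrak{P}(\log J_1) = 0$ by Proposition \ref{oba} (since $J_1$ is the H\"older IRN of the suitable $\mu_1$). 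The key observation is that $\mu_3$ is a $\sigma$-invariant probability, so its entropy $h(\mu_3) = -\int \log J_3^{\mathrm{eq}}\, d\mu_3$ can be compared against the sub-optimal competitor $\log J_1$ in the variational principle \eqref{bact7}. Concretely, I expect
\begin{equation*}
\int \log J_1\, d\mu_3 + h(\mu_3) \leq \mathfrak{P}(\log J_1) = 0,
\end{equation*}
since $\mu_3$ is just one invariant probability among those over which the supremum defining $\mathfrak{P}(\log J_1)$ is taken. Combined with $h(\mu_3) = -\int \log J_1\, d\mu_3$ from \eqref{bela}, this would pin down the sign of the difference.

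\medskip

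The hard part will be assembling these pieces so the inequality points the right way, because the variational estimate above naively gives $h(\mu_3) \leq -\int \log J_1\, d\mu_3$, which combined with \eqref{bela} yields an equality rather than a strict comparison with $h(\mu_1)$. The correct comparison must instead test $\log J_1$ against the probability $\mu_1$ itself: since $\mathfrak{P}(\log J_1) = 0$ and $\mu_1$ need not be invariant, I would instead invoke the variational principle with $\mu_3$ as competitor for the potential $\log J_1$ to get $h(\mu_3) + \int \log J_1\, d\mu_3 \leq 0$, and separately use $h(\mu_1) = -\int \log J_1\, d\mu_1$ together with the $\mathcal{L}_{\log J}^*$-invariance relations to express $\int \log J_1\, d\mu_3$ in terms of quantities anchored at $\mu_1$. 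This is where the identity \eqref{loloqati} relating $J_2$ to $J_1$ and $J$, and the pushforward structure $\mu_2 = \mathcal{L}_{\log J}^*(\mu_1)$, should do the decisive work, ultimately reducing $h(\mu_3) - h(\mu_1)$ to a manifestly nonnegative relative-entropy expression of the form $D_{KL}(\mu_3, \mu_1) \geq 0$. Verifying that the bookkeeping of the coboundary and $\sigma^n$ terms produces exactly this nonnegative KL divergence, rather than its negative, is the main obstacle I would need to handle carefully.
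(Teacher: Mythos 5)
Your handling of the first two claims is fine and matches the paper: the paper simply re-runs the eigenfunction computation of Proposition \ref{oba} with $\mu_2$ in place of $\mu_1$ to get $\mu_3=\varphi\,\mu_2$, $\lambda=1$ and $\mathfrak{P}(\log J_2)=0$, and then uses \eqref{loloqati} together with the $\sigma$-invariance of $\mu_3$ to pass from $-\int\log J_2\,d\mu_3$ to $-\int\log J_1\,d\mu_3$. (One slip: the identity you quote, $\log J_2=\log J_1+\log\varphi-\log(\varphi\circ\sigma)-\log\lambda$, is not the relation in play; the eigenfunction coboundary relates $\log J_2$ to the normalized Jacobian of $\mu_3$, while $\log J_2$ and $\log J_1$ are related by \eqref{loloqati}, namely $\log J_2=(\log J_1)\circ\sigma+\log J-(\log J)\circ\sigma$. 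You still land on \eqref{bela}, but via two different cancellations.)

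The genuine gap is exactly the one you flag in your last paragraph: you never prove $h(\mu_1)\le h(\mu_3)$, and the reduction you hope for cannot be carried out. Note that \eqref{loloqati} exhibits $\log J_2$ as $\log J_1$ plus a H\"older coboundary, so $\mu_3$ is simply the equilibrium state of $\log J_1$, and the claim reduces to $-\int\log J_1\,d\mu_1\le h\bigl(\mu_{\log J_1}\bigr)$ for a suitable, generally non-invariant, $\mu_1$. The variational principle gives no such bound: $\mu_1$ is not an admissible competitor in \eqref{bact7}, and the identity $h(\mu_1)+\int\log J_1\,d\mu_1=0$ holds trivially for every suitable $\mu_1$, so it carries no information. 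A concrete test on $\{1,2\}^{\mathbb{N}}$: let $\mu_1$ give the first coordinate the law $(r,1-r)$ and all later coordinates i.i.d.\ with law $(p,1-p)$, independently. Then $J_1$ is locally constant and positive, $\mu_3$ is the $(p,1-p)$ Bernoulli measure with $h(\mu_3)=-p\log p-(1-p)\log(1-p)$, while $h(\mu_1)=-\int\log J_1\,d\mu_1$ works out to $-r\log r-(1-r)\log(1-r)-p\log(p/r)-(1-p)\log\bigl((1-p)/(1-r)\bigr)$; for $p=0.9$, $r=0.8$ this gives $h(\mu_1)\approx 0.464>0.325\approx h(\mu_3)$, so the inequality points the wrong way. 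Your suspicion that the bookkeeping might produce the divergence with the wrong sign is therefore well founded, and the paper's own final step, which asserts $h(\mu_1)+\int\log J_1\,d\mu_3\le 0$ on the grounds that $\mathfrak{P}(\log J_1)=0$, mixes the entropy of $\mu_1$ with an integral against $\mu_3$ and is not an instance of the variational principle either. The inequality does hold, as an equality, when $\mu_1$ is itself invariant with H\"older Jacobian $J_1$, since then $\mu_3=\mu_1$.
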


\begin{proof}

If $\varphi$ is the main eigenfunction and $\lambda$ the associated eigenvalue of the Ruelle operator 
$\mathcal{L}_{\log J_2}$, then
$$ \log J_3 = \log J_2 + \log \varphi - \log (\varphi \circ \sigma) - \log \lambda.$$

We assume that $\varphi$ satisfies a normalization condition  $\int \varphi d \mu_2=1.$

The equilibrium probability $\mu_3$ for $\log J_3$ satisfies $\mathcal{L}_{\log J_3} (\mu_3)= \mu_3$ (and it is the unique probability satisfying this property).
We claim that $\mu_3= \varphi \mu_2$.
Indeed, we will show that $\mathcal{L}_{\log J_3}^* (\varphi\, \mu_2)=  \varphi\, \mu_2$ and from this will follow that $\mu_3 =\varphi\, \mu_2.$

Given any continuous function $g:\Omega \to \mathbb{R}$, as $\mathcal{L}_{\log J_2}^* (\mu_2)=\mu_2$ from \eqref{corri2}, we get 
$$ \int g\, d \mathcal{L}_{\log J_3}^*(\mu_3) = \int \sum_a g(a x) J_3 (a x)  \mu_3 (x)=  \int \sum_a g(a x) J_3 (a x) d \mu_3 (x)=$$
$$  \int \sum_a g(a x) J_2 (a x) \frac{\varphi (a x)}{\varphi(x) \lambda} \varphi (x) d \mu_2 (x)=  \int \sum_a g(a x) J_2 (a x) \frac{\varphi (a x)}{ \lambda}  d \mu_2 (x)=$$
$$ \int \mathcal{L}_{\log J_2}   \frac{ g(x)\, \varphi (x)}{ \lambda} d \mu_2 (x)  =    \int  \frac{ g(x)\, \varphi (x)}{ \lambda}  \mathcal{L}_{\log J_2}^*(\mu_2) = $$
\begin{equation} \label{roro} \int  \frac{ g(x)\, \varphi (x)}{ \lambda}  d \mu_2 (x) = \int  \frac{ g(x))}{ \lambda} \varphi(x) d \mu_2 (x)= \int  \frac{ g(x))}{ \lambda}  d \mu_3 (x).
\end{equation}

This shows that 
\begin{equation} \label{aroro}\mathcal{L}_{\log J_3}^* (\varphi\, \mu_2)  = \frac{1}{\lambda}  \varphi\, \mu_2.
\end{equation}

We claim that if
$\rho= \varphi\, \mu_2$ is a finite measure
(not a signed measure)  and  \eqref{aroro} is true, then it follows  $\lambda=1$.

\begin{remark} \label{poispois} Indeed, the claim follows from a simple reasoning using the involution kernel (see Definition \ref{poupou} in Section \ref{EP}) as described in \cite{GLP}. Indeed, from the involution kernel and $\rho$ one can get (see \eqref{tritri})  a positive eigenfunction associated to the Ruelle operator $\mathcal{L}_{\log J_3}$, but this impossible if $\lambda\neq 1$ (see Theorem 2.2 in \cite{PP} or Proposition 12 in \cite{LNotes}).
The full details of the relation of eigenprobabilities and the eigenfunctions  via the involution kernel is explained with details in  Remark \ref{poix} and expression \eqref{tritri}.
\end{remark}

\smallskip
From the claim we get $\mathcal{L}_{\log J_3}^* (\varphi\, \mu_2)=  \varphi\, \mu_2$.

Finally, 
$$ h(\mu_3) = - \int  \log J_3 d \mu_3 = -\int (\log J_2 + \log \varphi - \log (\varphi \circ \sigma) - \log \lambda   )\, d \mu_3=$$
\begin{equation} \label{rew} = -\int \log J_2 \, d \mu_3 = -  \int \log J_2 \, \,\varphi  \,d \mu_2.
\end{equation}

Note also that from \eqref{loloqati} and the invariance of $\mu_3$ we get
$$- \int \log J_2 d \mu_3 =- \int (\log (J_1 \circ\sigma )+ \log J - \log (J \circ \sigma)\,) d \mu_3  = -\int \log J_1 d \mu_3.$$

Finally, as the $\mathfrak{P}( \log J_1) =0$, from expression \eqref{bela} and the fact that $\mu_3$ is invariant we get
$$h(\mu_1)  - h(\mu_3) =  h(\mu_1) + \int \log J_1 d \mu_3\leq 0.$$

Therefore,
$h(\mu_1)  \leq h(\mu_3)$.

If $\mu_1$ is invariant with H\"older Jacobian $J_1$, then, from uniqueness of the equilibrium state, we get that $h(\mu_1)  < h(\mu_3)$.

\end{proof}

We recall our Definition \ref{klr}    for the validity of   the Second Law of Thermodynamics (second version) for the pair $(J,\mu)$: the entropy increase with the thermodynamic operation $\mu \to\mathcal{L}_{\log J}^*  (\mu)$.

It is not true that any pair  $(J,\mu)$ satisfies 
the Second Law of Thermodynamics.
In Theorem \ref{yyt7} we will present sufficient conditions for the validity of Second Law of Thermodynamics (second version) for the pair $(J,\mu)$.

\medskip

We will show in the next theorem  that
for a certain class of Jacobians $J$ fulfilling \eqref{rrty}, the thermodynamic operations of the kind $\mu_1 \to \mathcal{L}^*_{\log J} (\mu_1)$, satisfy the second law of Thermodynamics when acting on a certain family of probabilities $\mu_1$.
It is easy to see that the Jacobian $J$ associated with the maximal entropy measure satisfies  the property \eqref{rrty} for any probability $\mu_1$.

Note that assumptions (see (2.11)  in \cite{Sa} or (3.14) in \cite{Sch}) are required for the law $p \to P\, p$ to satisfy the property of increasing entropy ($P$ is taken as  a double stochastic matrix). There are examples of stochastic matrices $P$ such that for some vector of probability $p$ its action decrease entropy. Example \ref{exx}, which considers a Jacobian given by a symmetric matrix (a particular case of  a double stochastic matrix), 
will present a case where expression \eqref{rrty} is true in a strict sense and therefore the entropy strictly increases.

\begin{theorem}  \label{yyt7} Assume that $\mu_1$ is a probability and $J$ a  Jacobian (of a $\sigma$-invariant     probability $\mu$). Denote
by $J_2$  the IRN of $ \mu_2=\mathcal{L}^*_{\log J} (\mu_1)$  and by $J_1$  the IRN of $ \mu_1$.

If
\begin{equation} \label{rrty} 1 - \int \sum_a \frac{J(a x)^2}{J (x)}\, d \mu_1(x)\geq 0,
\end{equation}
then, the pair $(J,\mu_1)$ satisfies the Second Law of Thermodynamics, that is,
\begin{equation}  \label{vare}  h(\mathcal{L}^*_{\log J} (\mu_1))= h(\mu_2)=  - \int \log J_2 d \mu_2 \geq - \int \log J_1 d \mu_1 = h(\mu_1).
\end{equation}

\end{theorem}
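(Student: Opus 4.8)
The plan is to reduce the entropy difference $h(\mu_2)-h(\mu_1)$ to a single integral controlled by the hypothesis \eqref{rrty} via an elementary logarithmic inequality. First I would invoke Proposition \ref{cont}: since $\mu_2=\mathcal{L}^*_{\log J}(\mu_1)$, its IRN is given by \eqref{loloqati}, namely $J_2(z)=\frac{J_1(\sigma(z))\,J(z)}{J(\sigma(z))}$. Evaluating at a point of the form $ax$ and using $\sigma(ax)=x$, this telescopes to $J_2(ax)=\frac{J_1(x)\,J(ax)}{J(x)}$, so that
\[ \log J_2(ax)=\log J_1(x)+\log J(ax)-\log J(x). \]

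Next I would rewrite $h(\mu_2)=-\int \log J_2\,d\mu_2$ using the duality $\int \psi\,d\mu_2=\int \sum_a J(ax)\,\psi(ax)\,d\mu_1(x)$, which is \eqref{bact3} together with the definition \eqref{Rulu} of $\mathcal{L}_{\log J}$. Substituting the formula for $\log J_2(ax)$ and repeatedly using the normalization $\sum_a J(ax)=1$ from \eqref{Colo1}, the term carrying $\log J_1(x)$ integrates to exactly $h(\mu_1)$, while the $\log J(x)$ and $\log J(ax)$ contributions combine through the identity
\[ \log J(x)-\sum_a J(ax)\log J(ax)=-\sum_a J(ax)\log\frac{J(ax)}{J(x)}, \]
obtained by writing $\log J(ax)=\log J(x)+\log\frac{J(ax)}{J(x)}$ and again invoking $\sum_a J(ax)=1$. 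This yields the clean expression
\[ h(\mu_2)-h(\mu_1)=-\int \sum_a J(ax)\,\log\frac{J(ax)}{J(x)}\,d\mu_1(x). \]

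Finally I would apply the elementary inequality $-\log t\geq 1-t$, valid for all $t>0$, with $t=\frac{J(ax)}{J(x)}$. Multiplying by $J(ax)>0$ and summing over $a$ gives
\[ -\sum_a J(ax)\log\frac{J(ax)}{J(x)}\geq \sum_a J(ax)-\sum_a \frac{J(ax)^2}{J(x)}=1-\sum_a \frac{J(ax)^2}{J(x)}, \]
once more using $\sum_a J(ax)=1$. Integrating against $\mu_1$ bounds $h(\mu_2)-h(\mu_1)$ below by precisely $1-\int\sum_a \frac{J(ax)^2}{J(x)}\,d\mu_1$, which is nonnegative by hypothesis \eqref{rrty}, giving $h(\mu_2)\geq h(\mu_1)$ as claimed.

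The only genuine obstacle is the bookkeeping in the middle step: one must track carefully that $\sigma(ax)=x$ so that the cocycle-type ratio defining $J_2$ collapses, and then apply the normalization $\sum_a J(ax)=1$ at the correct places. The choice of the inequality $-\log t\geq 1-t$ is dictated exactly by the quadratic expression $\sum_a J(ax)^2/J(x)$ appearing in \eqref{rrty}; matching the hypothesis to the entropy difference is the single step requiring foresight rather than routine computation, and it also makes transparent why strict inequality in \eqref{rrty} forces a strict entropy increase (as in Example \ref{exx}).
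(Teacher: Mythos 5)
Your proposal is correct and follows essentially the same route as the paper: both reduce $h(\mu_2)-h(\mu_1)$ via the duality $\int\psi\,d\mu_2=\int\sum_a J(ax)\psi(ax)\,d\mu_1(x)$, the formula $J_2(ax)=J_1(x)J(ax)/J(x)$ from Proposition \ref{cont}, and the elementary bound $\log t\leq t-1$, arriving at exactly the lower bound $1-\int\sum_a J(ax)^2/J(x)\,d\mu_1$. The only difference is cosmetic: you substitute the formula for $J_2$ before applying the logarithmic inequality (note $J_1(x)/J_2(ax)=J(x)/J(ax)$), whereas the paper applies the inequality first and substitutes afterward.
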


\begin{proof}

We will show that under condition \eqref{rrty} we get that
 \begin{equation} \label{htps} - \int \log J_2 d \mu_2 + \int \log J_1 d \mu_1 \geq 0.
 \end{equation}

As $J$ is a Jacobian we get that for all $x$ we have $\sum_a J(a x) =1.$
Then, using the inequality $   \log x \leq  1 -\frac{1}{x} $ we get

$$ \int \log J_1 d \mu_1 - \int  \log J_2 \,d \mathcal{L}_{\log J}^*(\mu_1)=$$
$$ \int  \sum_a J(a x)  \log J_1 (x) \,d \mu_1(x) -  \int \sum_a J(a x) \log J_2 (a x)  d \mu_1 (x) =$$
$$ \int \sum_a J(a x)    \log \frac{J_1 ( x)}{J_2 (a x)} d \mu_1(x)\geq   \int \sum_a J(a x) [ 1-  \frac{J_2 (a x)}{J_1 ( x)}   ]   d \mu_1(x)  =$$
$$ 1 -    \int \sum_a J(a x)   \frac{J_2 (a x)}{J_1 ( x)}     d \mu_1(x) =  1 -  \int \sum_a J_2(a x)  \frac{J(ax)}{J_1(x)}\,  d \mu_1(x) = $$
$$   1 -   \int \sum_a \frac{J_1(x)\,J(a x)}{J(x)} \frac{J(a x)}{J_1(x)}\,   d \mu_1(x)   =   1-    \int \sum_a J (a x)  \,  \frac{J (a x)}{J ( x)} d \mu_1(x) \geq 0,       $$
where above we used  \eqref{loloqati}  and in the last inequality we used the hypothesis  \eqref{rrty}.

\smallskip



\smallskip
\end{proof}

\smallskip

The next example describes a kind of dynamical version of the claim (2.11) in \cite {Sa} (about increasing entropy for the case of the action of a double stochastic $P$).

\begin{example} \label{exx} Assume that  $\Omega =\{1,2\}^\mathbb{N}$, $J$ is the Jacobian of Example \ref{exlo}, which considers a stochastic matrix $P$, and, moreover, that  $P$ is symmetric.  Then, expression \eqref{rrty} is true for any independent probability $\mu_1$, with weights $p=(p_1,p_2)$, $p_1,p_2>0$. Indeed,
$$ \int \sum_a \frac{J(a x)^2}{J (x)}\, d \mu_1(x) = \sum_{j,k} \sum_a \frac{J(a j)^2}{J (j k)}\,  \mu_1 (\overline{jk})= $$

$$
\sum_{j,k} \sum_a \frac{J(a j)^2}{J (j k)}\,   p_j=\sum_{j,k} \sum_a \frac{P_{a j}^2}{P_{j k} }\,  p_j = \sum_{j} \sum_{a} \sum_k \frac{P_{a j}^2}{P_{j k} }\,  p_j >$$
$$\sum_{j} \sum_a \sum_{k=a}  \frac{P_{a j}^2}{P_{j k} }\,  p_j= \sum_{j}  \sum_a \frac{P_{a j}^2}{P_{j a} }\,  p_j =\sum_{a}  \sum_j \frac{P_{a j}^2}{P_{ a j} }\,  p_j=$$
$$ \sum_{a}  \sum_j P_{a j}\,  p_j= \sum_{a}  \,  q_a=1,$$
where $ P(p_1,p_2) = (q_1,q_2).$

Therefore, $ h( \mathcal{L}^*_{\log J} (\mu_1))>    h(\mu_1) = h(p).$

\end{example}

\smallskip

\section{Information Geometry for Gibbs measures} \label{fish}

In this section, we consider a continuous-time variation of  thermodynamic quantities.

We denote by $\mathcal{G}$ the infinite dimensional manifold of Holder Gibbs probabilities with the associated Riemannian structure as described in \cite{KGLM}. We denote a tangent vector to $\mathcal{G}$ in the point $\mu_1$
by $\xi$. The Holder Gibbs probability probability $\mu_1 + d \xi$ is obtained by the local exponential map on the tangent plane at $\mu_1$.

Our main goal is to show

\begin{theorem} \label{corre27} If $\xi$ is a tangent vector to $\mathcal{G}$ at $\mu_1$, then,
\begin{equation} \label{china244}  D_{K L}      (\mu_1,\mu_1 + d \xi) =   \frac{1}{2} \int \xi^2 d \mu_1  +
o (|d \xi|^2) ,
\end{equation}
where $ \int \xi^2 d \mu_1$ is the Fisher  information.
\end{theorem}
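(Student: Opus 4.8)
The plan is to realize the curve $t \mapsto \mu_t := \mu_1 + d\xi$ (with $d\xi = t\,\xi$ for small $t$, so that $o(t^2)=o(|d\xi|^2)$) through its normalized Jacobian and Taylor expand the dynamical KL divergence \eqref{imp244} directly. Since every $\mu_t$ lies in $\mathcal{G}$, its normalized Jacobian $J_t$ satisfies $\mathcal{L}_{\log J_t}(1)=1$, i.e.\ $\sum_a J_t(a\,x)=1$ for all $x$, which is \eqref{Colo1}. I would encode the tangent vector as the first variation of the log-Jacobian, writing
$$\log J_t = \log J_1 + t\,\xi + \tfrac{t^2}{2}\,\eta + o(t^2),$$
where $\xi=\tfrac{d}{dt}\log J_t\big|_{t=0}$ and $\eta=\tfrac{d^2}{dt^2}\log J_t\big|_{t=0}$ are H\"older, and $\log J_0=\log J_1$. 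Differentiating the normalization identity $\sum_a J_t(a\,x)=1$ once and twice at $t=0$ gives, respectively,
$$\mathcal{L}_{\log J_1}(\xi)=0 \qquad\text{and}\qquad \mathcal{L}_{\log J_1}(\xi^2)+\mathcal{L}_{\log J_1}(\eta)=0.$$

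Next I would insert the expansion into \eqref{imp244} with the first argument $\mu_1$ held fixed:
$$D_{KL}(\mu_1,\mu_t)= -\int \log J_t\, d\mu_1 + \int \log J_1\, d\mu_1 = -t\int \xi\, d\mu_1 - \tfrac{t^2}{2}\int \eta\, d\mu_1 + o(t^2).$$
The eigenmeasure property $\mathcal{L}^*_{\log J_1}(\mu_1)=\mu_1$ from \eqref{corri2} says $\int \mathcal{L}_{\log J_1}(g)\,d\mu_1=\int g\,d\mu_1$ for every continuous $g$. Applying this with $g=\xi$ and with $g=\xi^2$ and combining with the two constraints above yields
$$\int \xi\,d\mu_1=\int \mathcal{L}_{\log J_1}(\xi)\,d\mu_1=0, \qquad \int \eta\,d\mu_1=\int \mathcal{L}_{\log J_1}(\eta)\,d\mu_1=-\int \mathcal{L}_{\log J_1}(\xi^2)\,d\mu_1=-\int \xi^2\, d\mu_1.$$
The first identity kills the linear term (as it must, since $D_{KL}(\mu_1,\cdot)\geq 0$ attains its minimum at $\mu_1$), while the second converts the quadratic term into $+\tfrac{t^2}{2}\int\xi^2\,d\mu_1$. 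Thus $D_{KL}(\mu_1,\mu_t)=\tfrac12\int \xi^2\,d\mu_1+o(|d\xi|^2)$, which is \eqref{china244}.

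The step I expect to be the main obstacle is not the algebra but securing the existence and smoothness of the family $t\mapsto \log J_t$ and the uniform control needed for the $o(|d\xi|^2)$ remainder; this rests on the real-analyticity of the Ruelle operator together with its leading eigendata, and on the analytic manifold structure of $\mathcal{G}$ and its exponential map established in \cite{KGLM} and \cite{LR}, which legitimize the second-order Taylor expansion. Finally, to confirm that $\int\xi^2\,d\mu_1$ really is the Fisher information attached to the asymptotic-variance metric of $\mathcal{G}$, I would observe that the constraint $\mathcal{L}_{\log J_1}(\xi)=0$ forces the correlations to vanish: using the identity $\mathcal{L}^n_{\log J_1}\big(\xi\cdot(\xi\circ\sigma^n)\big)=\xi\cdot\mathcal{L}^n_{\log J_1}(\xi)=0$ for $n\geq 1$ (a consequence of \eqref{Colo}) together with $\int\mathcal{L}^n_{\log J_1}(\cdot)\,d\mu_1=\int(\cdot)\,d\mu_1$, one gets $\int \xi\,(\xi\circ\sigma^n)\,d\mu_1=0$, so that the asymptotic variance $\sigma^2(\xi)=\int\xi^2\,d\mu_1+2\sum_{n\geq 1}\int\xi\,(\xi\circ\sigma^n)\,d\mu_1$ collapses to $\int\xi^2\,d\mu_1$. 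This reconciles the $L^2$ expression in \eqref{china244} with the Riemannian metric and shows the expansion is intrinsic.
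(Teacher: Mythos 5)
Your proof is correct, but it takes a genuinely different route from the paper's. The paper (Propositions \ref{pppr} and \ref{corre}) parametrizes the perturbation at the level of the unnormalized potential, $A_\theta=\log J_1+\theta\xi$, writes the normalized Jacobian via the eigendata as in \eqref{rewo12}, and then reads off the first derivative of $D_{KL}$ from $\frac{d}{d\theta}\mathfrak{P}(\log J_1+\theta\xi)|_{\theta=0}=\int\xi\,d\mu_1$ and the second derivative from the nontrivial identification $\frac{d^2}{d\theta^2}\mathfrak{P}|_{\theta=0}=\text{asy-var}(\xi,\mu_1)=\int\xi^2 d\mu_1$ (Theorems \ref{gro1} and \ref{gro2}, i.e.\ results from \cite{PP} and \cite{Ji}). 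You instead expand the \emph{normalized} Jacobian directly, $\log J_t=\log J_1+t\xi+\frac{t^2}{2}\eta+o(t^2)$, and extract everything from two elementary facts: differentiating the normalization $\mathcal{L}_{\log J_t}(1)=1$ twice gives $\mathcal{L}_{\log J_1}(\xi)=0$ and $\mathcal{L}_{\log J_1}(\eta)=-\mathcal{L}_{\log J_1}(\xi^2)$, and the eigenmeasure property $\mathcal{L}_{\log J_1}^*(\mu_1)=\mu_1$ then pins down $\int\xi\,d\mu_1=0$ and $\int\eta\,d\mu_1=-\int\xi^2 d\mu_1$. This bypasses the pressure and the asymptotic-variance theorem entirely, and it has the added virtue of showing the second-order expansion depends only on $\xi$ and not on the particular curve (since only $\mathcal{L}_{\log J_1}(\eta)$, hence $\int\eta\,d\mu_1$, is constrained); your closing observation that $\mathcal{L}_{\log J_1}(\xi)=0$ kills all correlations $\int\xi\,(\xi\circ\sigma^n)\,d\mu_1$ correctly reconciles your $L^2$ answer with the paper's Riemannian norm \eqref{eai}. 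What the paper's route buys in exchange is that the smoothness of $\theta\mapsto\lambda_\theta,\varphi_\theta$ and hence the control of the remainder come directly from the analyticity of the Ruelle eigendata, whereas you must separately justify (as you acknowledge) that $t\mapsto\log J_t$ admits a second-order Taylor expansion with uniform remainder; also, the paper's pressure-based computation extends to the asymmetric case $D_{KL}(\mu_1,\mu_2+d\xi)$ with $\mu_1\neq\mu_2$ (Proposition \ref{pppr}), where the linear term $-\int\xi\,d\mu_1$ survives, while your argument as written is tailored to the diagonal case of the theorem.
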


 We will explain later the meaning of the expression $D_{K L}      (\mu,\mu+ d \xi) .$

Consider the H\"older potential $\log J$ and the associated probability $\mu \in \mathcal{G}$. Moreover, consider another H\"older function (a tangent vector) $\xi$ (which is fixed) and finally the potential
$A_\theta =\log J +\, \theta \,\xi$, $\theta \in \mathbb{R}$ (which is not normalized). We denote by $\log J_\theta $ the normalized potential  associated to the equilibrium probability
$\mu_\theta = \mu_{\log J +\, \theta \,\xi}$, $\theta \in \mathbb{R}$.

The Ruelle operator $\mathcal{L}_{A_\theta}$  has a positive H\"older  eigenfunction $\varphi_\theta$ and a positive eigenvalue $\lambda_\theta$. From \cite{PP} the Jacobian $\log J_\theta $ of $\mu_\theta$ can be described in the form
\begin{equation} \label{pois}\log J_\theta= A_\theta+   \log \varphi_\theta - \log \varphi_\theta (\sigma) - \log \lambda_\theta.
\end{equation}

The limit
$$\lim_{\theta \to 0} \mu_\theta =\mu$$
is a form of continuous time convergence to equilibrium.  We want to consider the derivative in the direction  $\xi$. It is natural to consider the directional derivative $\log J +\, \theta \,\xi$, for a small parameter $\theta \sim 0$.

The meaning of \eqref{china244} is to estimate, for a fixed tangent vector $\xi$,  the second order Taylor formula for the variation of KL divergence     $ D_{ K L} (\mu,\mu_\theta) $
with an infinitesimal variation of $\theta$. The vector $\xi$ is tangent at $\mu$ on the infinite dimensional manifold $\mathcal{G}$ (according to \cite{KGLM}).

In other words, we would like to estimate a Thermodynamic Formalism version of  (1.24) in \cite{Ama}.

\eqref{china244} is related to the Fisher Information which is a quite important concept in Statistics, Information Geometry  and Statistical Mechanics (see   section 7.5 in \cite{Roy},  Section 7 in \cite{Cat}, Section 1.6.4 in \cite{Arovas},  \cite{Rupp}, \cite{Sa}, \cite{Ji} or \cite{Espo}. The parameter $\theta$  can be considered as time  in a weakly relaxing setting of non-equilibrium (see  expression (29) in section E in \cite{Altaner}).

\smallskip

It will be necessary first to present some classical definitions and results from Thermodynamic Formalism.

\begin{definition}  Assume $\mu$ is the H\"older Gibbs probability for the potential $J$.  The  {\it asymptotic variance} for the H\"older function
$\xi:\Omega \to \mathbb{R}$ with respect to $\mu$ is
\begin{equation} \label{erto56}  \text{asy-var} (\xi, \mu)=\lim_{n \to \infty} \frac{1}{n} \int (\sum_{i=0}^{n-1}  \xi \circ \sigma^i -n\, \int \xi d \mu)^2 \,d \mu.
\end{equation}
\end{definition}

A C.L.T. can be proved for the function $\xi$ and the probability $\mu$ (see \cite{PP}) and  the variance of the limit Gaussian distribution is $ \text{asy-var} (\xi, \mu).$

If  $\int \xi d \mu=0$ we get
\begin{equation} \label{erto}  \text{asy-var} (\xi, \mu)=\lim_{n \to \infty} \frac{1}{n} \int (\sum_{i=0}^{n-1}  \xi \circ \sigma^i )^2 \,d \mu.
\end{equation}

 For $\xi$ fixed, we denote $\mathfrak{P}(\theta) $ the pressure of the potential $A_\theta=\log J +\, \theta \,\xi$.

It follows from \cite{PP}:

\begin{theorem} \label{gro1} Given the H\"older Jacobian $J$ and  the H\"older function
$\xi:\Omega \to \mathbb{R}$,  then
\begin{equation}  \frac{d^2 \, \mathfrak{P}(\theta) }{d^2 \theta}|_{\theta=0}\,=\text{asy-var} (\xi, \mu),
\end{equation}
where  $\mu$ is the H\"older Gibbs probability for the potential $J$.
\end{theorem}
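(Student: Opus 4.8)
The plan is to identify $\theta\mapsto\mathfrak{P}(\theta)$ with the scaled cumulant generating function of the Birkhoff sums $S_n\xi=\sum_{i=0}^{n-1}\xi\circ\sigma^i$ under $\mu$, and then read off the asymptotic variance as the second $\theta$-derivative at $0$. The starting point is the operator identity $\mathcal{L}_{\log J+\theta\xi}^n(\varphi)=\mathcal{L}_{\log J}^n(e^{\theta S_n\xi}\varphi)$, which is immediate from the defining formula \eqref{Rulu} because $S_n(\log J+\theta\xi)=S_n\log J+\theta S_n\xi$ and the factor $e^{\theta S_n\xi}$ only reweights the preimages. Taking $\varphi=1$ and integrating against $\mu$, and using that $\mu$ is the eigenprobability $\mathcal{L}_{\log J}^*\mu=\mu$ (so $(\mathcal{L}_{\log J}^*)^n\mu=\mu$), gives $\int \mathcal{L}_{\log J+\theta\xi}^n(1)\,d\mu=\int e^{\theta S_n\xi}\,d\mu$. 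By the Ruelle--Perron--Frobenius theorem (\cite{PP}), $\mathcal{L}_{A_\theta}^n(1)$ behaves like $\lambda_\theta^n$ times a positive function, so $\frac1n\log\int\mathcal{L}_{A_\theta}^n(1)\,d\mu\to\log\lambda_\theta=\mathfrak{P}(\theta)$, whence
\[ \mathfrak{P}(\theta)=\lim_{n\to\infty}\frac1n\log\int e^{\theta S_n\xi}\,d\mu. \]

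Next I would differentiate the finite-$n$ generating function $\Lambda_n(\theta):=\log\int e^{\theta S_n\xi}\,d\mu$. A direct computation gives $\Lambda_n'(\theta)=\frac{\int S_n\xi\,e^{\theta S_n\xi}d\mu}{\int e^{\theta S_n\xi}d\mu}$, and $\Lambda_n''(\theta)$ is the variance of $S_n\xi$ with respect to the tilted probability proportional to $e^{\theta S_n\xi}\,d\mu$. At $\theta=0$ the tilt disappears and, by $\sigma$-invariance of $\mu$, $\frac1n\Lambda_n'(0)=\frac1n\int S_n\xi\,d\mu=\int\xi\,d\mu$, while
\[ \frac1n\,\Lambda_n''(0)=\frac1n\,\mathrm{Var}_\mu(S_n\xi)=\frac1n\int\Big(\textstyle\sum_{i=0}^{n-1}\xi\circ\sigma^i-n\int\xi\,d\mu\Big)^2 d\mu. \]
By the very definition \eqref{erto56} of the asymptotic variance, the right-hand side converges to $\text{asy-var}(\xi,\mu)$ as $n\to\infty$. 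Thus, if differentiation in $\theta$ commutes with the limit in $n$, we obtain $\mathfrak{P}''(0)=\lim_n\frac1n\Lambda_n''(0)=\text{asy-var}(\xi,\mu)$, which is the assertion (and, as a by-product, $\mathfrak{P}'(0)=\int\xi\,d\mu$).

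The main obstacle is exactly this interchange of $\lim_{n}$ with $\frac{d^2}{d\theta^2}\big|_{\theta=0}$. I would resolve it by analyticity: the family $\theta\mapsto\mathcal{L}_{\log J+\theta\xi}$ is an analytic family of bounded operators on the space of H\"older functions, and at $\theta=0$ the leading eigenvalue $\lambda_0=1$ is simple and isolated from the rest of the spectrum (spectral gap of the normalized operator $\mathcal{L}_{\log J}$). By analytic perturbation theory the eigenvalue $\lambda_\theta$, hence $\mathfrak{P}(\theta)=\log\lambda_\theta$, extends to a holomorphic function on a complex neighborhood of $0$, and the same spectral decomposition shows that $\frac1n\Lambda_n(\theta)\to\mathfrak{P}(\theta)$ uniformly and with uniform bounds on a complex disc about $0$. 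A normal-families (Vitali/Cauchy-estimate) argument then upgrades this to convergence of the second derivatives at $0$, legitimizing the interchange.

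An alternative, fully self-contained route avoids the limit altogether: differentiate the eigenvalue equation $\mathcal{L}_{A_\theta}\varphi_\theta=\lambda_\theta\varphi_\theta$ (with $\varphi_0=1$, $\lambda_0=1$) twice in $\theta$, using $\frac{d}{d\theta}\mathcal{L}_{A_\theta}(\psi)=\mathcal{L}_{A_\theta}(\xi\psi)$, and integrate against $\mu$. The first order yields $\dot\lambda_0=\int\xi\,d\mu$; solving the first-order equation $(\mathrm{Id}-\mathcal{L}_{\log J})\dot\varphi_0=\mathcal{L}_{\log J}(\xi)-\int\xi\,d\mu$ via the spectral gap and inserting into the second-order relation gives $\mathfrak{P}''(0)=\ddot\lambda_0-(\dot\lambda_0)^2$, which rearranges to $\int\tilde\xi^2\,d\mu+2\sum_{k\ge1}\int\tilde\xi\,(\tilde\xi\circ\sigma^k)\,d\mu$ with $\tilde\xi=\xi-\int\xi\,d\mu$; this Green--Kubo series is precisely $\text{asy-var}(\xi,\mu)$ by the exponential decay of correlations. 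Either way the identity follows, and I would present the generating-function computation as the main line, citing \cite{PP} for the analytic and spectral input.
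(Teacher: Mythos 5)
Your argument is correct. Note, however, that the paper does not actually prove Theorem \ref{gro1}: it records it as a known fact and cites \cite{PP}, where the standard proof is the spectral--perturbative one --- exactly your ``alternative, fully self-contained route'' (differentiate $\mathcal{L}_{A_\theta}\varphi_\theta=\lambda_\theta\varphi_\theta$ twice, use the spectral gap to solve the first-order cohomological equation, and resum the resulting Green--Kubo series $\int\tilde\xi^2\,d\mu+2\sum_{k\ge1}\int\tilde\xi\,(\tilde\xi\circ\sigma^k)\,d\mu$ into the limit \eqref{erto56} via exponential decay of correlations). Your main line --- identifying $\mathfrak{P}(\theta)$ with the scaled cumulant generating function $\lim_n\frac1n\log\int e^{\theta S_n\xi}\,d\mu$ through the correct operator identity $\mathcal{L}_{\log J+\theta\xi}^n(\varphi)=\mathcal{L}_{\log J}^n(e^{\theta S_n\xi}\varphi)$ and the fixed-point property $\mathcal{L}_{\log J}^*\mu=\mu$ --- is a genuinely different and equally valid route; it has the virtue that the second derivative of the finite-$n$ generating function is \emph{literally} $\frac1n\mathrm{Var}_\mu(S_n\xi)$, so the identity with \eqref{erto56} requires no resummation, at the price of having to justify the interchange of $\lim_n$ with $d^2/d\theta^2$. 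You correctly isolate that interchange as the only real obstacle and your fix is sound: analytic perturbation of the simple isolated eigenvalue gives $\int\mathcal{L}_{A_\theta}^n(1)\,d\mu=\lambda_\theta^n\bigl(c_\theta+O(r^n)\bigr)$ with $c_\theta$ holomorphic and nonvanishing on a complex disc about $0$, so $\frac1n\Lambda_n\to\log\lambda_\theta$ locally uniformly there and Vitali/Cauchy estimates pass to second derivatives. In short: your secondary route reproduces the cited proof, your primary route is an acceptable alternative, and either suffices.
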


The value $\text{asy-var} (\xi, \mu)$ describes susceptibility with respect to the variation of  $\xi$.

\begin{definition}
Given  $\xi$, denote $V_n^{\xi}: \Omega \to \mathbb{R}$ the function defined by
\begin{equation}x= (x_1,x_2,...,x_n,..) \,\to \, V_n^{\xi}(x)  =
\frac{d}{d \theta}|_{\theta=0} \log \mu_\theta  (\overline{x_1,x_2,...,x_{n-1}}),
\end{equation}
where $\overline{x_1,x_2,...,x_{n-1}}$ is the corresponding cylinder set of size $n$.
\end{definition}

Following Definition 4.3  in \cite{Ji} we define:
\begin{definition}  Given  $\xi$   we call
$$\mathfrak{F}_{\mu,\xi}^n=\int  V_n^\xi (x)^2 \,d \mu (x)=$$
\begin{equation} \int (\frac{d}{d \theta}|_{\theta=0} \log \mu_{\log J  + \theta \xi}  (\overline{x_1,...,x_{n-1}})\,)^2 d \mu (x)
\end{equation}
the Fisher information at time $n$ for $\xi$ and $\mu$.
\end{definition}

\begin{definition}  Given  $\xi$  we call
\begin{equation} \mathfrak{F}_{\mu,\xi}  = \,\lim_{n \to \infty} \frac{1}{n} \int V_n^\xi (x)^2 \,d \mu (x)
\end{equation}
the Fisher information for the tangent vector $\xi$ and $\mu$.
\end{definition}

A nontrivial result is Proposition 4.4 in \cite{Ji} which claims:

\begin{theorem} \label{gro2} Given the H\"older Jacobian $J$ and the direction $\xi$, then
\begin{equation}  \mathfrak{F}_{\mu,\xi} =\text{asy-var} (\xi, \mu),
\end{equation}
where $\mu$ is the H\"older Gibbs probability for the potential $J$.
\end{theorem}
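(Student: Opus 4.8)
The plan is to compute the score function $V_n^\xi$ explicitly and to recognize it, up to a uniformly bounded error, as a centered Birkhoff sum of $\xi$; the asymptotic variance of such sums is exactly $\text{asy-var}(\xi,\mu)$ by definition \eqref{erto56}, and Theorem \ref{gro1} has already identified this quantity with $\mathfrak{P}''(0)$. So the whole argument reduces to linking the cylinder probabilities $\mu_\theta(\overline{x_1,\ldots,x_{n-1}})$ to Birkhoff sums and then performing an $L^2(\mu)$ limit.

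First I would record the regularity of the eigendata. Since $\xi$ is H\"older, the family $\theta \mapsto \mathcal{L}_{A_\theta}$ with $A_\theta=\log J + \theta\,\xi$ is analytic, and the spectral gap of the Ruelle operator on the H\"older space (see \cite{PP}) makes the leading eigenvalue $\lambda_\theta$ and the eigenfunction $\varphi_\theta$ analytic in $\theta$ with uniformly bounded derivatives near $\theta=0$. Writing $\psi=\frac{d}{d\theta}\log\varphi_\theta|_{\theta=0}$, a bounded H\"older function, and using the standard identity $\frac{d}{d\theta}\log\lambda_\theta|_{\theta=0}=\mathfrak{P}'(0)=\int\xi\,d\mu$, differentiation of \eqref{pois} gives $\frac{d}{d\theta}\log J_\theta|_{\theta=0}=\xi+\psi-\psi\circ\sigma-\int\xi\,d\mu$, a centered function modulo the coboundary $\psi-\psi\circ\sigma$.

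Next I would connect the cylinder measure to Birkhoff sums of $\log J_\theta$. Because $J_\theta$ is the normalized Jacobian (topological pressure zero), the Gibbs property yields $\log\mu_\theta(\overline{x_1,\ldots,x_{n-1}})=S_{n-1}\log J_\theta(x)+R_{n,\theta}(x)$, where $R_{n,\theta}$ is bounded uniformly in $n$ and $\theta$ and, by the analyticity above, so is $\frac{d}{d\theta}R_{n,\theta}$. Differentiating at $\theta=0$, telescoping the coboundary $\psi-\psi\circ\sigma$, and absorbing the bounded remainders gives $V_n^\xi(x)=S_{n-1}\xi(x)-(n-1)\int\xi\,d\mu+b_n(x)$, where $S_{n-1}\xi(x)=\sum_{i=0}^{n-2}\xi(\sigma^i x)$ and $\sup_n\|b_n\|_\infty<\infty$.

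Finally I would pass to the limit in $L^2(\mu)$. Since $\|b_n\|_{L^2(\mu)}$ stays bounded while $\|S_{n-1}\xi-(n-1)\int\xi\,d\mu\|_{L^2(\mu)}$ grows like $\sqrt{n}$, the cross term contributes $O(1/\sqrt{n})$ and the $b_n$ term contributes $O(1/n)$ after dividing by $n$; hence $\lim_n \frac1n\int (V_n^\xi)^2\,d\mu=\lim_n \frac1n\int (S_{n-1}\xi-(n-1)\int\xi\,d\mu)^2\,d\mu=\text{asy-var}(\xi,\mu)$, which equals $\mathfrak{P}''(0)$ by Theorem \ref{gro1}. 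The hard part will be the third step: producing the representation of $V_n^\xi$ with a remainder bounded uniformly in both $n$ and $\theta$ whose $\theta$-derivative is likewise controlled. This is precisely where the spectral gap and the analytic dependence of the eigenprojection on $\theta$ are indispensable, and it constitutes the technical core of Proposition 4.4 in \cite{Ji}.
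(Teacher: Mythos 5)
The paper does not actually prove this statement: it is quoted verbatim as Proposition 4.4 of \cite{Ji}, so there is no internal argument to compare yours against. Your outline is the standard route to that result and is essentially sound: differentiate the normalization identity \eqref{pois} to get $\frac{d}{d\theta}\log J_\theta|_{\theta=0}=\xi+\psi-\psi\circ\sigma-\int\xi\,d\mu$, use the Gibbs property to write $\log\mu_\theta(\overline{x_1,\ldots,x_{n-1}})$ as a Birkhoff sum of $\log J_\theta$ plus a bounded remainder, telescope the coboundary, and observe that a uniformly bounded perturbation of the centered Birkhoff sum does not affect the $\frac1n L^2$ limit (the cross term is $O(n^{-1/2})$ by Cauchy--Schwarz, exactly as you say). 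The one place where you wave your hands is the claim that $\frac{d}{d\theta}R_{n,\theta}|_{\theta=0}$ is bounded uniformly in $n$: uniform boundedness of $R_{n,\theta}$ alone does not give this. Two honest ways to close it: (i) analytic perturbation theory for the transfer operator gives bounds on $R_{n,\theta}$ that are uniform in $n$ on a fixed \emph{complex} disc $|\theta|<\delta$, and then a Cauchy estimate controls the derivative at $0$; or (ii) bypass the Gibbs inequality by the exact identity $\mu_\theta(\overline{x_1\ldots x_{n-1}})=\int e^{S_{n-1}\log J_\theta(\tau_{x_1}\cdots\tau_{x_{n-1}}y)}\,d\mu_\theta(y)$ (apply $\mathcal{L}_{\log J_\theta}^{n-1}$ to the indicator of the cylinder and use $\mathcal{L}_{\log J_\theta}^{*}\mu_\theta=\mu_\theta$), which makes the remainder an explicit logarithm of an integral whose integrand and measure depend analytically on $\theta$ with H\"older norms controlled uniformly in $n$. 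You correctly identify this as the technical core of Ji's proposition; with either of these devices filled in, your argument is a complete and self-contained proof of a statement the paper only cites.
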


The above result does not require that $\int \xi d \mu=0,$ but we will  use the claim under such hypothesis.

The results in \cite{Ji} about Fisher information are related to the asymptotic efficiency of maximum likelihood estimators (see section 4 in \cite{Ji}).

The Fisher information also provides a metric structure for a statistical manifold (see \cite{Ama} or the chapter on information geometry in \cite{Cat}).
The metric allows you to compute a distance between neighboring probability distributions which is a measure of the extent to which the two distributions can be statistically distinguished from each other. In chapter 10 in \cite{Cat} the Fisher information  was used in a crucial way to derive Quantum Mechanics from entropy.


\smallskip

In \cite{KGLM} and \cite{LR} it is considered a natural Riemannian metric in the infinite-dimensional manifold of H\"older Gibbs probabilities $\mathcal{G}$. Some points in the manifold $\mathcal{G}$ have positive curvature and others have negative curvature (see \cite{LR}).
We point out that in \cite{LR} explicit expressions for the curvature can be obtained. This Riemannian metric is not compatible with the one associated with the $2$-Wasserstein distance.

Given a probability  $\mu$ (with H\"older Jacobian $J$) the tangent space to $\mathcal{G}$ at $\mu$ is given by the set of H\"older vectors $\xi:\Omega \to \mathbb{R}$ which are on the kernel of the operator $\mathcal{L}_{\log J}.$ In this case we get from \eqref{bact3} that $\int \xi d \mu=0$.

 Moreover the asymptotic variance
 \begin{equation} \label{eai} \text{asy-var} (\xi, \mu)=  \mathfrak{F}_{\mu,\xi} =|\xi|^2,
 \end{equation}
 where $|\xi|^2$ denotes the square of the Riemannian norm of the tangent vector $\xi$ at $\mu$ on $\mathcal{G}$ (see  sections 3 and  4 in \cite{KGLM}). This is the reason why we say that the Riemannian metric we consider on $\mathcal{G}$ is natural.

 In this case, it follows from Theorems \ref{gro1} and \ref{gro2} the relation with the Fisher information
\begin{equation} \label{eai1}
\frac{d^2 \, \mathfrak{P}(\theta) }{d^2 \theta}|_{\theta=0} = |\xi|^2 =  \mathfrak{F}_{\mu,\xi}.
\end{equation}

Note that  $\int \xi d \mu=0$, for a given H\"older function $\xi:\Omega \to \mathbb{R}$, does not mean that $\xi$ is a tangent vector to $\mathcal{G}$ at $\mu$.

Consider two H\"older Jacobians $J_1$ and $J_2$.
Denote $A_\theta =\log J_2 +\, \theta \,\xi$,  where $\xi$ is a tangent vector at $\mu_2$ on $\mathcal{G}$ and  $\theta \in \mathbb{R}$.  The associated H\"older Jacobian  is denoted by $J^\theta$ and $\mu^\theta$ is the associated equilibrium state for  $A_\theta$ (or, for $\log J_\theta$).

In a similar way as in \eqref{pois} we  get that the  Jacobian $J^\theta$  satisfies
\begin{equation} \label{rewo12}\log J^\theta= \log J_2 +  \theta \, \xi +  \log \varphi_\theta - \log \varphi_\theta (\sigma) - \log \lambda_\theta,
\end{equation}
where $\log \lambda_\theta= \mathfrak{P}(\log J_2 +  \theta \, \xi  ).$

It is known (see \cite{PP}) that for a continuous function $w:\Omega \to \mathbb{R}$ (not necessarily satisfying $\int w d \mu_2=0$)
\begin{equation} \label{rewo}   \frac{d}{d \theta}|_{\theta=0}   \mathfrak{P}(\log J_2 +  \theta \, w  )  =\int w d \mu_2.
\end{equation}

Question: For   fixed H\"older Gibbs probability $\mu_1$ and Jacobian $J_2$, estimate on the direction $\xi: \Omega \to \mathbb{R}$ (not necessarily tangent at $\mu_2$) on the base point $\mu_2 \in \mathcal{G}$, the first derivative
$$ \frac{d}{d \theta}|_{\theta=0}   D_{K L}      (\mu_1,\mu^\theta)  =\frac{d}{d t}|_{\theta=0}  (   - \int \log J^\theta d \mu_1 + \int  \log J_1 d \mu_1 ). $$

We will address this question.

This  estimate  is the Thermodynamic Formalism version of  (1.24) in \cite{Ama}.

Note that from \eqref{rewo}, \eqref{rewo12}  and the invariance of $\mu_1$
$$ \frac{d}{d \theta}|_{\theta=0}   D_{K L}      (\mu_1,\mu^\theta)  = $$
$$ \frac{d}{d \theta}|_{\theta=0} \,[ - \int (\log J_2 +  \theta \, \xi +  \log \varphi_\theta - \log \varphi_\theta (\sigma) - \log \lambda_\theta ) d \mu_1 +  \int  \log J_1 d \mu_1      ]=$$
$$ \frac{d}{d \theta}|_{\theta=0} \,[ - \int (\log J_2 +  \theta \, \xi - \log \lambda_\theta ) d \mu_1 +  \int  \log J_1 d \mu_1      ]=$$
\begin{equation} \label{china11}  - \int \xi d \mu_1 + \int \xi d \mu_2 .
\end{equation}

In the case we assume that $\xi$ is tangent at $\mu_2$ (which implies $\int \xi d \mu_2 =0$), then we get
\begin{equation} \label{china0} \frac{d}{d \theta}|_{\theta=0}   D_{K L}      (\mu_1,\mu^\theta)  =   - \int \xi d \mu_1 + \int \xi d \mu_2 =- \int \xi d \mu_1.
\end{equation}

Section E in \cite{Altaner} call a non equilibrium of {\it   strongly relaxing} if \eqref{china0} is less or equal zero  (see expression(31) in \cite{Altaner}). This seems to be not  always the case here.

\begin{proposition} \label{pppr} Assume that $\xi$ is a tangent vector to $\mathcal{G}$ at $\mu_2$, then,
\begin{equation} \label{china88}  D_{K L}      (\mu_1,\mu_ 2 + d \xi) =  -  \int \xi d \mu_1 + \frac{1}{2} \int \xi^2 d \mu_2  +
o (|d \xi|^2) ,
\end{equation}
where $ \int \xi^2 d \mu_2$ is the Fisher  information.
\end{proposition}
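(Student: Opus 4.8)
The plan is to realize the left-hand side as the second-order Taylor polynomial in $\theta$ of the scalar function $\theta \mapsto D_{K L}(\mu_1, \mu^\theta)$ along the analytic curve $\theta \mapsto \mu^\theta$ of Gibbs states generated by the potential $A_\theta = \log J_2 + \theta\,\xi$, with the infinitesimal displacement $d\xi$ read as $\theta\,\xi$. First I would substitute the explicit normalized Jacobian \eqref{rewo12} into $D_{K L}(\mu_1,\mu^\theta) = -\int \log J^\theta \, d\mu_1 + \int \log J_1\,d\mu_1$. Since $\mu_1$ is a Gibbs probability, hence $\sigma$-invariant, the coboundary term $\log\varphi_\theta - \log\varphi_\theta(\sigma)$ integrates to zero against $\mu_1$, while $\log\lambda_\theta = \mathfrak{P}(\log J_2 + \theta\xi) =: \mathfrak{P}(\theta)$ is constant in $x$. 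This collapses the expression to
\[ D_{K L}(\mu_1,\mu^\theta) = D_{K L}(\mu_1,\mu_2) - \theta\int \xi\,d\mu_1 + \mathfrak{P}(\theta), \]
so that the whole $\theta$-dependence is carried by the affine term $-\theta\int\xi\,d\mu_1$ and the pressure function $\mathfrak{P}(\theta)$.

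The problem is thereby reduced to Taylor-expanding $\mathfrak{P}(\theta)$ at $\theta=0$ to second order, using three ingredients already available in the excerpt. First, $\mathfrak{P}(0)=\mathfrak{P}(\log J_2)=0$ because $J_2$ is a Jacobian; this order-zero term combines with $-\int\log J_2\,d\mu_1 + \int\log J_1\,d\mu_1$ into the base value $D_{K L}(\mu_1,\mu_2)$. Second, the derivative of pressure \eqref{rewo} gives $\mathfrak{P}'(0)=\int\xi\,d\mu_2$, which vanishes because $\xi$ is tangent at $\mu_2$, so it lies in the kernel of $\mathcal{L}_{\log J_2}$ and has $\mu_2$-mean zero; the first-order coefficient then reduces to $-\int\xi\,d\mu_1$, recovering \eqref{china0}. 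Third, by Theorem \ref{gro1} the second derivative is the asymptotic variance, $\mathfrak{P}''(0)=\text{asy-var}(\xi,\mu_2)$. Combining, $\mathfrak{P}(\theta)=\tfrac{\theta^2}{2}\,\text{asy-var}(\xi,\mu_2)+o(\theta^2)$, and subtracting the base value $D_{K L}(\mu_1,\mu_2)$ leaves exactly the displayed first- and second-order terms once $o(\theta^2)=o(|d\xi|^2)$.

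The one substantive step I would justify carefully is the identity $\text{asy-var}(\xi,\mu_2)=\int \xi^2\,d\mu_2$, that is, that the asymptotic variance collapses to the plain $L^2$-norm for tangent directions. This is precisely the content of \eqref{eai}, where the natural Riemannian norm $|\xi|^2$ is identified with both the asymptotic variance and the Fisher information; the reason it holds is that all time-correlations vanish for $\xi$ in the kernel of the transfer operator. Indeed, since $\mathcal{L}_{\log J_2}^*\mu_2=\mu_2$ yields $\int h\,d\mu_2=\int \mathcal{L}_{\log J_2}(h)\,d\mu_2$, and since $\mathcal{L}_{\log J_2}\big(\xi\cdot(\xi\circ\sigma^k)\big)=(\xi\circ\sigma^{k-1})\,\mathcal{L}_{\log J_2}(\xi)=0$, every correlation $\int\xi\,(\xi\circ\sigma^k)\,d\mu_2$ with $k\ge 1$ vanishes, so the defining series for $\text{asy-var}$ in \eqref{erto} reduces to the single term $\int\xi^2\,d\mu_2$. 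This is the place where tangency is genuinely used, and it is the main point to get right.

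Finally I would record that the real-analyticity of $\theta\mapsto\mathfrak{P}(\theta)$ for Hölder potentials (from \cite{PP}) legitimizes the Taylor remainder, and that setting $\mu_1=\mu_2$ in this proposition — where $\int\xi\,d\mu_1=0$ by tangency and $D_{K L}(\mu_1,\mu_1)=0$ — immediately yields Theorem \ref{corre27} as the special case with no first-order term.
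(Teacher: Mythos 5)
Your proposal is correct and follows essentially the same route as the paper: substitute the normalized Jacobian \eqref{rewo12}, kill the coboundary term by invariance of $\mu_1$, and read off the first and second derivatives of the pressure, identifying $\mathfrak{P}''(0)$ with the asymptotic variance and hence with $\int \xi^2\, d\mu_2$ via \eqref{eai}--\eqref{eai1}. The only difference is that you spell out why the asymptotic variance collapses to the $L^2$-norm for $\xi$ in the kernel of $\mathcal{L}_{\log J_2}$ (vanishing correlations), a fact the paper simply imports from \cite{KGLM}.
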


\begin{proof}
Now we consider the second derivative. From \eqref{eai} and \eqref{eai1} we get
$$ \frac{d^2}{d^2 \theta}|_{\theta=0}   D_{K L}      (\mu_1,\mu^\theta)  =\frac{d^2}{d^2 \theta}  (   - \int \log J^\theta d \mu_1 + \int  \log J_1 d \mu_1 )= $$
\begin{equation} \label{china15} \frac{d^2}{d^2 \theta}|_{\theta=0} -\,[ \int (\log J_2 +  \theta \, \xi - \log \lambda_\theta ) d \mu_1 ]=  \int \xi^2 d \mu_2.
\end{equation}

The claim follows  from \eqref{china0} and \eqref{china15}.

\end{proof}

Theorem \ref{corre27} will be  a consequence of the Proposition \ref{corre} which follows at once  from Proposition \ref{pppr}.

\begin{proposition} \label{corre} Assume $\mu_1=\mu_2$ and  $\xi$ is a tangent vector to $\mathcal{G}$ at $\mu_1$, then,
\begin{equation} \label{china}  D_{K L}      (\mu_1,\mu_ 1 + d \xi) =   \frac{1}{2} \int \xi^2 d \mu_1  +
o (|d \xi|^2) ,
\end{equation}
where $ \int \xi^2 d \mu_1$ is the Fisher  information.
\end{proposition}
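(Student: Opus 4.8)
The plan is to obtain Proposition \ref{corre} as the diagonal case $\mu_1 = \mu_2$ of Proposition \ref{pppr}, so that almost all of the analytic work is already done. Specializing the expansion \eqref{china88} by setting $\mu_2 = \mu_1$ gives immediately
$$D_{K L}(\mu_1, \mu_1 + d\xi) = -\int \xi \, d\mu_1 + \frac{1}{2}\int \xi^2 \, d\mu_1 + o(|d\xi|^2),$$
so the only thing left to verify is that the first-order term $-\int \xi \, d\mu_1$ vanishes, after which the expansion collapses to \eqref{china}.

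This vanishing is exactly the tangency hypothesis. Since $\xi$ is a tangent vector to $\mathcal{G}$ at $\mu_1$, by the description of the tangent space it lies in the kernel of the Ruelle operator $\mathcal{L}_{\log J}$, where $J$ is the H\"older Jacobian of $\mu_1$. Using $\mathcal{L}_{\log J}^*(\mu_1) = \mu_1$ together with the duality \eqref{bact3} applied to $\varphi = \xi$ and $\rho_1 = \mu_1$, I would conclude $\int \xi \, d\mu_1 = \int \mathcal{L}_{\log J}(\xi)\, d\mu_1 = 0$, precisely the remark made after \eqref{eai1}. Hence the linear contribution disappears and only the quadratic term survives.

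It then remains to identify the surviving coefficient $\int \xi^2 \, d\mu_1$ as the Fisher information. For a tangent vector $\xi$ at $\mu_1$ the chain \eqref{eai}--\eqref{eai1} gives $\text{asy-var}(\xi,\mu_1) = \mathfrak{F}_{\mu_1,\xi} = |\xi|^2$, and because $\xi \in \ker \mathcal{L}_{\log J}$ all correlations $\int \xi\,(\xi\circ\sigma^k)\, d\mu_1$ with $k\geq 1$ vanish, so that $\text{asy-var}(\xi,\mu_1) = \int \xi^2 \, d\mu_1$. This justifies labelling $\int \xi^2 \, d\mu_1$ the Fisher information and completes the statement.

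If instead one wished to argue without invoking Proposition \ref{pppr}, the genuine content would be the second-order Taylor expansion of $\theta \mapsto D_{K L}(\mu_1,\mu^\theta)$, and the main obstacle would be establishing $\frac{d^2}{d^2\theta}|_{\theta=0} D_{K L}(\mu_1,\mu^\theta) = \int \xi^2 \, d\mu_1$. This rests on differentiating the normalized Jacobian \eqref{rewo12} twice, using that the telescoping terms $\log\varphi_\theta - \log(\varphi_\theta\circ\sigma)$ integrate to zero against the invariant probability $\mu_1$, together with the pressure-derivative formula \eqref{rewo} and its second-order analogue Theorem \ref{gro1}, which supplies the asymptotic variance. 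In the reduction used here, however, all of this has already been absorbed into Proposition \ref{pppr}, so the remaining step is genuinely immediate.
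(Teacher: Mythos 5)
Your proposal is correct and follows exactly the paper's route: the paper states that Proposition \ref{corre} ``follows at once'' from Proposition \ref{pppr}, and the only point to check is that the linear term $-\int \xi\, d\mu_1$ vanishes when $\mu_1=\mu_2$, which you justify correctly via $\int \xi\, d\mu_1 = \int \mathcal{L}_{\log J}(\xi)\, d\mu_1 = 0$ for a tangent vector $\xi$. Your additional remark identifying $\int \xi^2\, d\mu_1$ with the asymptotic variance (hence the Fisher information) via the vanishing of the correlations $\int \xi\,(\xi\circ\sigma^k)\, d\mu_1$ is consistent with the paper's equation \eqref{eai} and is a welcome, if not strictly required, clarification.
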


The above second-order Taylor formula is the analogous of expression (1.24) in \cite{Ama}.












\section{Thermodynamic of gases} \label{gaz}

In this section, we briefly explain for the mathematician some basic concepts of gas Thermodynamics which will be our main focus in the next sections.

The reader familiar with the topic of this section can skip it.

In gas Thermodynamics  the equilibrium state is characterized by internal energy $U$, volume $V$, temperature $T$, etc.
Moreover, only differences of energy $\Delta U$ (or, differences of  heat $\Delta Q)$, rather than absolute values of energy $U$ (or, heat $Q$), have physical significance (page 12 in \cite{Callen}).  This point of view will be followed in the other sections, for instance, in expressions \eqref{p275} and \eqref{p276} in section \ref{gazes} -  which considers  the Thermodynamic Formalism setting.

Spontaneous heat transfer from hot to cold is an irreversible process.
The second law of thermodynamics states: heat cannot spontaneously flow from a colder location to a hotter location. The concept of irreversibility is linked to the concept  of entropy production (see \cite{MN}, \cite{MNS}, \cite{Cat}, \cite{Wang}, \cite{ThoQi}, and \cite{LM}) which will be considered in sections \ref{fish} and  \ref{EP}.

A {\it thermodynamic process} (also called {\it thermodynamic operation}) may be defined as the energetic evolution of a thermodynamic system proceeding from an initial state to a final state. We will elaborate on that.
In our Thermodynamic  setting the time is not a relevant variable - the {\it quasi static-regime} - and  this means that  the thermodynamic  processes we consider are such that  the changes are slow enough for the system to remain in internal equilibrium.
The terminology  "thermostatics" would  actually be more appropriate   than "thermodynamics"    because we will not consider here  the change of the  physical system with time. The quasi static-regime describes  a type of  non equilibrium  where a certain kind of equilibrium still happens.

 Section \ref{fish},  where time plays some role, is an exception here.

In the quasi-static regime
the {\it work} $W$ and {\it volume} $V$ are related by the  {\it pressure} $p$ via the equation
\begin{equation} \label{t1} d W =-  p\, d V.
\end{equation}

A version of this expression will be  described by  \eqref{p575} in Section \ref{gazes}.

The meaning of expression \eqref{t1} can be observed in the case of a gas enclosed in a cylinder with a movable piston  (see figure \ref{fig:Graf}). If the volume of the system is decreased slowly in a continuous way,  work is done in the system, increasing its energy. If the variation of work is positive this will increase the energy of the system - the gas realizes work on the external medium.  If the variation of work is negative we say that  the external medium realizes work on the gas. These thermodynamical quantities are controlled by an experimenter.

For a more general form of \eqref{t1} see   \eqref{poup}.

We denote by $Q$ the {\it heat}. In an infinitesimal quasi-static processes, the variation of the quasi-static heat $d Q$ satisfies the equation
\begin{equation} \label{t2}  d Q = d U - d W.
\end{equation}

The above expression is a common form of the {\bf First Law of Thermodynamics} and  describes a form of conservation of energy (see Chapter 2 in  \cite{Arovas}). The variation of heat is related to the variation of energy and the variation of work.

A version of expression \eqref{t2} will be  described by expression \eqref{p277}     in Section \ref{gazes}.


\smallskip

We point out that the conventional signal minus we used in \eqref{t2} (before the work $W$) can be avoided depending if we consider the  work on the system or on the external medium. That is, the form $d Q = d U + d W$ also appears in the literature.

Note that  we also get from \eqref{t2} and \eqref{t1}
\begin{equation} \label{t3}  d Q = d U + p\, d V.
\end{equation}

It is implicit in the above notation that given a certain state the value $U$ is the energy of the state and $Q$ its heat. $\Delta Q$ describes the variation of heat under the action of the thermodynamic operation. Same thing for the variation $\Delta U$.

The {\it Entropy} $S$ (denoted in such way in this section) is a concept for states which are in gas-thermodynamical  equilibrium. A fundamental postulate is that under a certain set of constraints, the equilibrium of the system is achieved for the state which maximizes entropy, among states satisfying these constraints (see the MaxEnt method in section \ref{gazes}). This claim corresponds in Information Theory to the problem of optimizing capacity cost for a hard channel; that is,  maximizing entropy among probabilities that accomplish a certain fixed mean cost (see \cite{CL}).

 \begin{figure}[h] \label{fig:Graf}
\center
\includegraphics[height=4cm,width=12cm, trim=0.1in 0.08in 0.1in 0.08in, clip]{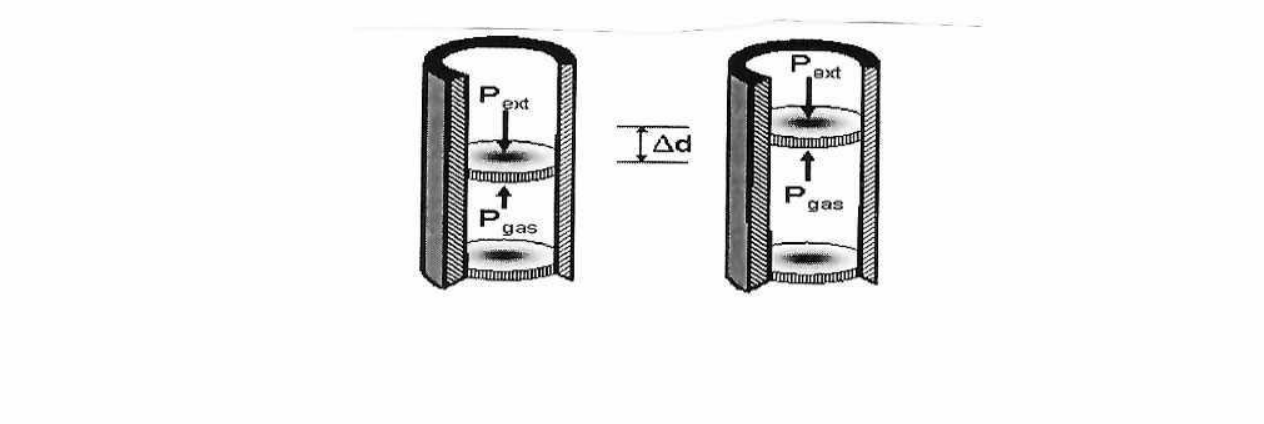}
\caption{Work, volume and pressure.   The image describes a slow and continuous variation in the volume of the piston chamber. The expression $d W = - p\, d V$ is true in the quasi-static regime. }
\label{fig:Graf}
\end{figure}

The variation of total entropy $d_t S$ can be written as a sum of two terms
\begin{equation} \label{entropia} d_ t S= T\, d_e  S+ d_i S,
\end{equation}
where $d_e S$ is the entropy supplied to the systems by its {\it surroundings} and $d_i S$ is the entropy produced {\it inside} the system (see (1)  page 20 in \cite{Groot}).


The variation of entropy $S$ and the variation of heat  $Q$ are related by the temperature:
\begin{equation} \label{t5}  d Q = T \, d_e S.
\end{equation}

The above is true for the so-called closed systems, which may only exchange heat with their surroundings (see (4) page 20 in \cite{Groot}). 

A version of this relation in our setting appears in \eqref{rert}   in Section \ref{gazes}.

A Thermodynamical Systems is called
{\it reversible}, if $ d Q = T dS$,  {\it spontaneous} if   $ d Q < T dS$, and
{\it adiabatic} if $d Q =0$ (see section 2.2.1 in \cite{Arovas}).

  The {\bf Second Law of Thermodynamics} claims that $d_i S$ must be zero for reversible transformations and positive for irreversible transformations of the system. The entropy $d_e S$ can be positive, zero, or negative. For adiabatic insulated systems $d_e S=0.$

Our hypothesis on Theorem \ref{yyt7}, about conditions for the increase of entropy (Second Law),  should then correspond to the increase of $d_i S$.

From the above, \eqref{t2}, \eqref{t5}  and \eqref{t3} we get the quasi-static equations
\begin{equation} \label{t6}
T\, d S = d U - d W=  d U + p\, d V.
\end{equation}

 $U, S, V $  are known as {\it extensive} variables and the $T, p$  as  {\it intensive} variables.



The equation
\begin{equation} \label{t17} \frac{d S}{d U} =  \frac{1}{T} \,,\end{equation} is known as {\it the fundamental Gibbs equation} (see (5.37) in \cite{Cat}).  A version of this equation is presented in \eqref{rert} in Section \ref{gazes}.

The {\bf Second Law of Thermodynamics} was formulated in a
strong form by Gibbs (1878): {\it for an irreversible processes not only does the
entropy tends to increase, but it does increase to the maximum value allowed by
the constraints imposed on the system}.

The MaxEnt method of Jaynes is a natural way to describe the above statement (see \cite{Jaynes} and Section V.b in \cite{Shore}). In the Thermodynamic Formalism setting this claim is described in Section \ref{gazes} (see expression \eqref{p4}).

The {\it Helmholtz free energy} $F$ (see section 2.8.2  in \cite{Arovas}) of a state (see also expression \eqref{p2} in Section \ref{gazes})  is given by
\begin{equation} \label{t8}
F = U - T \, S,\,\,\text{where}\,\,T \,\text{and}\,\,V \,\,\text{do not depend of the state}.
\end{equation}

It is implicit in the above notation that given a certain state the value $U$ is the energy of the state and $S$ 
.

An equilibrium state is a state which minimizes the Helmholtz free energy  (see Section \ref{gazes}). This variational formulation is analogous to the variational principle of minimum action of Classical Mechanics.

The problem of finding the probability minimizing Helmholtz free energy is equivalent (in Thermodynamic Formalism) to finding the probability which maximizes topological pressure (see \eqref{bact7}  and   \eqref{p2}).

The
{\it Gibbs free energy  $G$} is described by
\begin{equation} \label{t81} G = U + p\, V  - T \,S,
\end{equation}
where $p$ is pressure and $V$ is volume. States minimizing the Gibbs free energy extend the scope of the meaning of equilibrium states when one adds  pressure and volume to the problem.
In  \eqref{p575} in section \ref{gazes} we describe the concept of  pressure in Thermodynamic Formalism. In this way, one can consider in this theory (we will not address this issue here) a  broader class of problems related to the introduction of the term $- p\, V$ on the maximizing problem (related to minimizing \eqref{t81}).

\begin{remark} \label{uut} In \cite{Cat} the author describes in Section 5.7 a version of the Second Law of Thermodynamics  for an irreversible system which in simplified terms is the following: consider a time-dependent Hamiltonian $H(t)$ and an initial equilibrium state $f^{can}$. The initial condition evolves according to Liouville equation and after time $t'$  attains the state $f(t')$, which is not in equilibrium. Now, in some way, using the MaxEnt method one can get from $f(t')$ an associated equilibrium state $f^{can} (t')$. Finally, it follows from the  computations on the text that the entropy of $f^{can} (t')$ is larger than the initial entropy of $f^{can}$. Theorem \ref{yyt721} in our Section \ref{pre} describes a similar kind of behavior under the action of the dual of the Ruelle operator.
\end{remark}




For a mathematical formulation   of thermodynamics via contact geometry we refer the reader to \cite{Schaft} and \cite{Balian}.

We will not elaborate much on the concept of  thermodynamic operation on the setting of thermodynamic of gases. We just mention that the quasi-static action of a piston on figure \ref{fig:Graf} describes a certain type of thermodynamic operation (as a function of the change of volume in a continuous way, for instance). We
consider a natural form of thermodynamic operation in sections \ref{pre}, \ref{gazes} and  \ref{TO}  (the Thermodynamic Formalism setting)





The Second law of Thermodynamics considers an isothermal process (temperature is fixed). The system is in contact with a  large single heat bath at temperature $$T =\frac{1}{\beta \, \times \text{Boltzmann constant} },$$
where $\beta>0$.  The letter $\beta$ is in according to a
common tradition in Statistical Mechanics.

\section{Thermodynamic Formalism and Thermodynamic of gases} \label{gazes}



We would like to interpret the concepts described in  section \ref{gaz} (like variation of work, heat, internal energy, etc.,) within a vision of classical Thermodynamic Formalism
and Shannon-Kolmogorov entropy on the Bernoulli space $\Omega=\{1,2,...,d\}^\mathbb{N}$.  We will consider in the end of this section a  continuous  variation of Thermodynamic quantities.

One of the  main concepts in Thermodynamic Formalism is the  Topological  Pressure (see Definition \ref{fufu}).

Consider a  H\"older  continuous function $ M: \Omega \to \mathbb{R}$.

The {\it Topological Pressure} of the potential $\frac{-\, M}{T}$, where $T $ is temperature, is the value
\begin{equation} \label{p1}
\mathfrak{P} (-  \frac{M}{T}) = \sup \{ h(\mu) - \frac{1}{T}\int   M d \mu\,|\, \mu \, \text{invariant for the shift}\, \sigma\},
\end{equation}
where $M$ corresponds to the {\it extensities}, $h(\mu) $ is the {\it  Shannon-Kolmogorov entropy} of $\mu$ and $T$ is {\it temperature}.   A particular case of interest in Statistical Mechanics is when $M=H$, where $H$  is the Hamiltonian, but can be also something more general. We deliberately used the notation $\mathfrak{P} $ for Topological Pressure in order not to confuse  the concept just defined above with the concept of pressure $p$ as described in   Section \ref{gaz}.

An {\it equilibrium state for  $\frac{-\, M}{T}$} is  a shift invariant probability on $\Omega$ attaining the maximal value $\mathfrak{P} (\frac{-\, M}{T}).$

When $M$ is H\"older the Ruelle operator is a quite useful tool for understanding the more important properties of the associated equilibrium state. For instance, the main eigenvalue $\lambda$ of the Ruelle operator $\mathcal{L}_{\frac{-\, M}{T}} $ satisfies
$\log \lambda = \mathfrak{P} (\frac{-\, M}{T}).$ The main eigenvalue $\lambda$ and the main eigenfunction $\varphi$ of the Ruelle operator  are analytic functions of $M$ (see \cite{PP}).

It is usual in the literature to denote by $\beta$ the value $1/T$. The Hamiltonian $H$ describes energy and therefore
in the context of Physics we are interested in invariant probabilities maximizing
\begin{equation} \label{p73}
  \mathfrak{P}(- \, \beta H)=\sup \{ h(\mu) - \beta \int   H d \mu\,|\, \mu \, \text{invariant for the shift}\, \sigma\}.
\end{equation}

The minus sign that goes before $H$ is natural and compatible with the  measurements  in  laboratory showing that, for the equilibrium probability maximizing \eqref{p73}, the sets with strings in $\Omega$ with high value of energy have smaller probability.


A typical example of function $M$ is
\begin{equation} \label{vvv}
M= H +  p_1\, M_1 + p_2 M_2+ ...+ p_j\, M_j
\end{equation} (that is $H$ is one of the elements in the sum). The functions $M_1,M_2,...,M_j$, are the so called  working functions (or, extensities) and the $p_1,p_2,...,p_j$ are called the {\it intensities}. The formula for the variation of work is now
\begin{equation} \label{poup}  d W = -  \sum_{l=1}^j p_l \, d M_l.
\end{equation}

Volume $V$ could be one of the  working functions $M_l$ (see \eqref{t1}) and in this case the corresponding  intensity $p_l$ would describe pressure.

When we compare the topological  pressure $\mathfrak{P} (H) $  with Helmholtz free energy
\begin{equation} \label{p24 }F(H) = H - T \, S
\end{equation}  (which corresponds to expression \eqref{t8}), we get the relation
\begin{equation} \label{p2}
\mathfrak{P} (-H) = - \frac{F(H)}{T}.
\end{equation}

For a fixed temperature $T$ to maximize pressure $\mathfrak{P}$ is equivalent to minimize $F$. Equilibrium states are the also the  ones which  minimize Helmholtz free energy. When $T=1$ we get
\begin{equation} \label{p299} -\,\mathfrak{P} (-H) =  F(H) = \text{ Helmholtz free energy for\, } H.
\end{equation}

The case when $M = H + p \,  V$ in \eqref{vvv} can be considered as  related to the analysis of states minimizing Gibbs free energy (see \eqref{t81}).
\smallskip

As a working example, we address the case where \eqref{vvv} depends on three variables $(z_0,z_1,z_2)$.
Consider the family of  H\"older functions $f_j^v :\Omega \to \mathbb{R}$, $j=0,1,2$, $v \in \mathbb{R}$, and  the variable $z=(z_0,z_1,z_2)\in \mathbb{R}^3$.

We take above $- \frac{1}{T}\,M=z_0 \,f_0^v  + z_1 \, f_1^v + z_2 \,f_2^v$.
The {\it extensive functions}  $f_0, f_1,f_2$,  could represent in physical problems quantities as
particle numbers, magnetic moments,  electrical charge, etc. They depend on an external parameter $v\in \mathbb{R}$. All of the above could also be considered for $z=(z_0,z_1,z_2,...,z_n)\in \mathbb{R}^n$, but we want to simplify the notation.

For each fixed $v$ take
$$z = (z_0,z_1,z_2) \to \mathfrak{P}^v (z_0,z_1,z_2) =\mathfrak{P} (\, z_0 \,f_0^v  + z_1 \, f_1^v + z_2 \,f_2^v\,)= $$
\begin{equation} \label{p3}   \sup \{ h(\mu) +  \int    (\,z_0 \,f_0^v + z_1 \, f_1^v + z_2 \,f_2^v)\,d \mu\,\,\,|\,\,\, \mu \, \text{invariant for the shift}\, \sigma\}.
\end{equation}

 The value $\int    (\,z_0 \,f_0^v + z_1 \, f_1^v + z_2 \,f_2^v)\,d \mu$ will be called the {\it internal energy} $U^v$ of the state $\mu$ for the parameter $v$.

We denote $\mu_{z_0,z_1,z_2}^v$ the invariant probability  which maximizes the unconstrained problem \eqref{p3}.
In the case there exists a natural probability $d v$ on the set of parameters $v$ we can be interested in the probability $\int \mu_{z_0,z_1,z_2}^v\, d v.$
The parameter  value $v$ (called macroscopic control parameter in \cite{Altaner}) can correspond to volume or externally applied magnetic field (see \cite{Cat}).

We say that a function $L:\Omega \to \mathbb{R}$ is {\it cohomologous} to a constant if there exist $\varphi:\Omega \to \mathbb{R}$
and $c$ such that
\begin{equation} \label{bact8}  L = \varphi \circ \sigma - \varphi + c.
\end{equation}

For each value $v$, we will assume Hypothesis A of \cite{Lall1}: $f_j^v$, $j=0,1,2$, satisfies the property, if $(a_0,a_1,a_2)$ is such that
$$ \,a_0 \,f_0^v + a_1 \, f_1^v + a_2 \,f_2^v$$
is cohomologous to  constant, then $a_j=0$, $j=0,1,2$.

Now, for each fixed $v$ and the variable $x=(x_0,x_1,x_2)$ take
$$  (x_0,x_1,x_2) \to \alpha^v(x_0,x_1,x_2) = \sup\{ h(\mu)\,|\, \int  f_0^v d \mu= x_0, \int  f_1^v d \mu= x_1,$$
\begin{equation} \label{p4}  \int f_2^v d \mu=x_2,\, \, \text{where}\,\,\mu \, \text{is invariant for}\, \sigma \}.
\end{equation}

We denote $\tilde{\mu}_{x_0,x_1,x_2}^v$  the   invariant probability such that its entropy $ h (\tilde{\mu}_{x_0,x_1,x_2}^v)$ maximizes  the constrained problem  \eqref{p4}, that is the arg max  of \eqref{p4}. This probability, 
obtained via an inference procedure, 
should be called the {\it a posteriori} probability for the fixed constraints functions  $f_0^v,f_1^v,f_2^v$ and values $v_0,v_1,v_2$. The existence of 
$\tilde{\mu}_{x_0,x_1,x_2}^v$ follows from the semicontinuity of the entropy (see also \cite{Lall1}).

The reasoning behind finding $\tilde{\mu}_{x_0,x_1,x_2}^v$ is
called the {\it method of maximum entropy} and describes the original point of view of R. Clausius for the Thermodynamics of gases. We want to relate \eqref{p4}
with \eqref{p3} .

An increase in entropy  means an increase  in the uncertainty of the information.
The   invariant probability  $\tilde{\mu}_{x_0,x_1,x_2}^v$ such that its entropy maximizes  the constrained problem  \eqref{p4} can be understood as the probability   optimizing the  capacity costfunction of a hard constrained channel (see page 1168 in \cite{CL} and also \cite{Ben}).  This line of reasoning is naturally justified  by the so-called Jaynes principle:

 {\it The unbiased guess of Jaynes  is the method of seeking among all
possible distributions the one  which comprises maximum entropy. Any
other would comprise unjustified prejudices.}
\smallskip

Consider for fixed $v$ the law
$$x=(x_0,x_1,x_2) \to\gamma^v(x_0,x_1,x_2)  =$$
\begin{equation} \label{p5}= \sup_{(z_0,z_1,z_2) \in \mathbb{R}^3}\{ x_0 \, z_0 + x_1\,z_1 + x_2 \, z_2 -\, \mathfrak{P}^v (z_0,z_1,z_2)  \} ,
\end{equation}
which is the Legendre Transform of $\mathfrak{P}^v  (z_0,z_1,z_2) .$

For each $v$, given $x$ there exists a unique $z^v=z^v (x)$ such that
\begin{equation} \label{p27} x= (x_0,x_1,x_2) \to z^v =(z_0^v,z_1^v,z_2^v) = (z_0^v(x),z_1^v(x),z_2^v(x))   \in \mathbb{R}^3,
\end{equation}
where $z^v =(z_0^v,z_1^v,z_2^v)$ realizes the supremum in \eqref{p5}.
The existence of  $z^v$ follows from the convexity of $\mathfrak{P}^v.$

From (d)  page 161 in \cite{Lall1} we get that for fixed $v$, given $x$ there exists $\tilde{z}^v= (\tilde{z}_0^v,\tilde{z}_1^v ,\tilde{z}_2^v)$ such that
$$\nabla \mathfrak{P}^v (\tilde{z}_0^v,\tilde{z}_1^v,\tilde{z}_2^v) = (x_0,x_1,x_2).$$

It is known that for  each fixed $v$ we get $\alpha^v(x) = - \gamma^v(x)$ (see \cite{CL} or \cite{Lall1}).

We say that $z^v = (z_0^v,z_1^v,z_2^v)$  is the dual pair of $x=(x_0,x_1,x_2)$ if $z=(z_0^v,z_1^v,z_2^v)$ realizes the supremum in \eqref{p5}. This means that
$$\nabla \mathfrak{P}^v (z_0,z_1,z_2) =$$
\begin{equation} \label{bact15}  (\frac{\partial \mathfrak{P}^v }{\partial z_0} (z_0,z_1,z_2) ,\frac{\partial \mathfrak{P}^v }{\partial z_1}  (z_0,z_1,z_2),\frac{\partial \mathfrak{P}^v }{\partial z_2} (z_0,z_1,z_2))= (x_0^v,x_1^v,x_2^v),
\end{equation}  which is equivalent to
$\nabla \alpha^v (x_0,x_1,x_2) = (z_0,z_1,z_2)$ (see Lemma 1 page 1170 in \cite{CL} or (h) page 162 in \cite{Lall1}).

The equality $\nabla \alpha^v (x_0,x_1,x_2) = (z_0,z_1,z_2)$ corresponds to expression (4.77) in \cite{Cat}.

From the above we get, for fixed $v$,   the bijective relation
\begin{equation} \label{bact16}  (x_0^v,x_1^v,x_2^v) \Leftrightarrow (z_0^v,z_1^v,z_2^v).
\end{equation}

We get the equality (see (i) page 162 in \cite{Lall1})
\begin{equation} \label{p279}\alpha^v (\tilde{x}_0^v,\tilde{x}_1^v,\tilde{x}_2^v) =  \mathfrak{P}^v (\tilde{z}_0^v,\tilde{z}_1^v,\tilde{z}_2^v) -    x_0^v  z_0^v + x_1^v z_1^v+ z_2^2 x_2^v.
 \end{equation}

 If $z^v=(z_0,z_1,z_2)$  is the dual pair of $x^v=(x_0,x_1,x_2)$, then,  $\tilde{\mu}_{x_0,x_1,x_2}= \mu_{z_0,z_1,z_2}$. This shows that the  method of maximum entropy  and the principle of maximizing pressure coincide.
 \smallskip

The matrix
\begin{equation} \label{bact1} \mathfrak{SP}= \left(\frac{\partial^2 \mathfrak{P}^v }{\partial z_i  \partial z_i}  (z_0,z_1,z_2) \right) _{i,j=1,2,3}
\end{equation}
is called the {\it susceptibility pressure matrix} (see (2.2) page 33 in  \cite{Sch} or Section 4.5.8  in \cite{Arovas}).

For dual pairs one gets (see Lemma 3 in \cite{LM} page 1173 or  (h) page 162 in \cite{Lall1})
\begin{equation} \label{bact2}  \left(\frac{\partial^2 \mathfrak{P}^v }{\partial z_i  \partial z_i}  (z_0,z_1,z_2) \right)_{i,j=1,2,3}^{-1}=\,-\ \left(\frac{\partial^2 \alpha^v }{\partial x_r  \partial x_s}  (x_0,x_1,x_2) \right)_{r,s=1,2,3}=\mathfrak{SE}.
\end{equation}

The  {\it susceptibility entropy matrix} $\mathfrak{SE}$ is minus the inverse of the {\it susceptibility pressure matrix} $\mathfrak{SP}$ .
The    matrix $\mathfrak{SE}$ is also called  the {\it fluctuation matrix} (see (2.5) in page 35 in \cite{Attard}). It is negative definite because the entropy of probabilities $\mu$ nearby $\tilde{\mu}_{x_0,x_1,x_2}^v$ (the probability realizing the supremum of $\alpha^v$ in \eqref{p4}) are smaller than $h(\tilde{\mu}_{x_0,x_1,x_2}^v).$

\smallskip

For a fixed  continuous function $f:\Omega \to \mathbb{R} $ the function
 $$ v \to \int f\, \tilde{\mu}_{x_0,x_1,x_2}$$
 is analytic on $v$ (see  \cite{BCV} for explicit formulas).

 We consider now that the variable $v$ describes volume and we would like to understand the variation of other important thermodynamic quantities with $v$.

 For fixed $f$, $x_j$, $j=0,1,2$, we denote
 \begin{equation} \label{p274} \int f\,\frac{d}{d v}|_{v =v_0}\, \tilde{\mu}_{x_0,x_1,x_2}^v:= \frac{d}{d v}|_{v =v_0}\int f\, \tilde{\mu}_{x_0,x_1,x_2}^v .
 \end{equation}

 For $x=(x_0,x_1,x_2)$ fixed, we denote
 \begin{equation} \label{p276}\mathbb{F}_k (v) = \int f_k^v \,\,d \tilde{\mu}_{x_0,x_1,x_2}^v ,
 \end{equation}
 $k=0,1,2$, the $k$ {\it internal energy}.

Then, for fixed $k=0,1,2$, $x_j$, $j=0,1,2$, we get  at the point $v_0$
\begin{equation} \label{p273}
\frac{\delta \mathbb{F}_k}{\delta v}= \frac{ d \mathbb{F}_k(v)}{d v}|_{v =v_0} = \int  \frac{d f_k^v}{ d v}|_{v =v_0} d
\tilde{\mu}_{x_0,x_1,x_2}^{v_0} + \int   f_k^{v_0}     \frac{d}{dv}|_{v =v_0}\, \tilde{\mu}_{x_0,x_1,x_2}^v.
\end{equation}

The above expression corresponds to expression (4.78) in \cite{Cat}.

For a fixed $v_0$ and $k$ we are considering an infinitesimal change of the parameter $v$ around $v_0$, which is described by
$ \frac{ d \mathbb{F}_k(v)}{d v}|_{v =v_0}$. If  $\mathbb{F}_k$ describes internal energy and $v$ represents volume, then the first term of the right hand side of \eqref{p273} is
\begin{equation} \label{p275} \frac{\delta W}{\delta v}|_{v =v_0}= \int  \frac{d f_k^v}{ d v}|_{v =v_0} d \tilde{\mu}_{x_0,x_1,x_2}^{v_0}
\end{equation}
which represents the {\it infinitesimal change of work} at $v_0$.

On the other hand, if   $\mathbb{F}_k$ describes internal energy and $v$ represents volume, then the second term of the right hand side of \eqref{p273} is
 \begin{equation} \label{p276} \frac{\delta Q}{\delta v}|_{v =v_0}= \int   f_k^{v_0} \,\,    \frac{d}{dv}|_{v =v_0}\, \tilde{\mu}_{x_0,x_1,x_2}^v.
\end{equation}
which represents the {\it  infinitesimal change of heat} at $v$.

\begin{proposition} Conservation of Energy:
\begin{equation} \label{p277}         \frac{\delta U}{\delta v} =  \frac{\delta W}{\delta v} +   \frac{\delta Q}{\delta v},
\end{equation}
where $\delta U$ represents the {\it  infinitesimal change of internal energy $U$}. This is an infinitesimal form of the {\it First Law of Thermodynamics} (see expression (4.82) in \cite{Cat}).
\end{proposition}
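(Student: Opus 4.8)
The plan is to recognize the stated First Law as nothing more than the Leibniz product rule applied to the differentiation of the $k$-th internal energy $\mathbb{F}_k(v) = \int f_k^v \, d\tilde{\mu}_{x_0,x_1,x_2}^v$, whose two resulting terms have already been named $\delta W/\delta v$ and $\delta Q/\delta v$ in \eqref{p275} and \eqref{p276}. Concretely, I would identify the internal energy $U$ with $\mathbb{F}_k$, so that by definition $\frac{\delta U}{\delta v} = \frac{d\mathbb{F}_k(v)}{dv}\big|_{v=v_0}$, and then exploit the fact that both the observable $f_k^v$ and the maximizing measure $\tilde{\mu}_{x_0,x_1,x_2}^v$ depend on the external parameter $v$.

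First I would differentiate $v \mapsto \int f_k^v \, d\tilde{\mu}_{x_0,x_1,x_2}^v$ at $v_0$. Carrying the derivative inside the integral and applying the product rule yields exactly the two contributions written in \eqref{p273}: a term in which only $f_k^v$ is differentiated while the measure is frozen at $v_0$, and a term in which only the measure is differentiated while $f_k^{v_0}$ is frozen, the latter interpreted through the convention \eqref{p274}. Matching these against the definitions, the first contribution is precisely $\frac{\delta W}{\delta v}\big|_{v=v_0}$ from \eqref{p275} and the second is precisely $\frac{\delta Q}{\delta v}\big|_{v=v_0}$ from \eqref{p276}; adding them reconstitutes $\frac{\delta U}{\delta v}$ and gives \eqref{p277}.

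The only genuine content, and hence the main obstacle, is the analytic justification for differentiating under the integral and for the existence of the two separate limits. This rests on the smooth (in fact analytic) dependence of the equilibrium probability $\tilde{\mu}_{x_0,x_1,x_2}^v$ on $v$: for a fixed H\"older observable $f$ the map $v \mapsto \int f \, d\tilde{\mu}_{x_0,x_1,x_2}^v$ is analytic, with explicit formulas in \cite{BCV}. This in turn follows from the analytic dependence of the leading eigenvalue and eigenfunction of the Ruelle operator on the potential \cite{PP}, combined with the bijection \eqref{bact16} between the constrained data $x$ and the dual intensities $z^v$ that selects the maximizing measure. Granting this regularity, together with the assumed differentiability of each $f_k^v$ in $v$, the interchange of derivative and integral is legitimate and the split into the work and heat terms is well defined, so the identity \eqref{p277} follows.
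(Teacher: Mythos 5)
Your proposal is correct and follows essentially the same route as the paper: the identity \eqref{p277} is just the product-rule decomposition \eqref{p273} of $\frac{d}{dv}\mathbb{F}_k(v)$ with the two terms identified as \eqref{p275} and \eqref{p276}, which is exactly the one-line argument the authors give. Your additional remarks on justifying the interchange of derivative and integral via the analytic dependence of $\tilde{\mu}_{x_0,x_1,x_2}^v$ on $v$ (as in \cite{BCV} and \cite{PP}) only make explicit the regularity the paper already invokes just before the proposition.
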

\smallskip

The claim follows from \eqref{p276},  \eqref{p275} and \eqref{p273}.

Consider now $k=1$, that is $- \frac{1}{T} H= \beta\, f_1^v$, $\beta \in \mathbb{R}$,  and the family of potentials $ \beta f_1^v$. The value $\beta$ is equal to
$\frac{1}{T}$, where $T$ is temperature. Denote by $\mu_\beta$ the equilibrium
probability for the potential $ \beta f_1^v$ and $h(\beta)$ its entropy. By definition (see expression (2.5) in \cite{Sch}), using expression \eqref{p73}, the energy for the parameter $\beta$ is given by
 \begin{equation} \label{p297}   E(\beta)= - \int f_1^v d \mu_\beta=- \frac{d P(\beta f_1^v )}{d \beta}= - \frac{d P(-\beta H )}{d \beta}.
 \end{equation}

 For the proof of the above relation see \cite{PP}.

 The second derivative $ \frac{d^2 P(\beta f_1^v )}{d^2 \beta}$ is the asymptotic variance in thermodynamic formalism (see \cite{PP} or \cite{LR}), which is also
 called  the {\it susceptibility} in the thermodynamics of gases (see the first expression in (2.19) in \cite{Sch}).

 We consider now that $E$ is an independent variable. For each value $E$ (in a certain interval range) we can associate
a value $\beta=\beta(E)$ such that
\begin{equation} \label{p497}-\frac{d P(-\beta \,H)}{d \beta}=E
\end{equation}  (note that because we assume Hypothesis A  the topological pressure is a strictly convex analytic function of $\beta$).

Now we write the entropy $h(E) = h( \beta(E))$ as a function of $E$. We want to estimate $\frac{d h (E)}{d E}.$

Note that
$$h(E) =  P(-  \beta(E) H ) + \beta(E) \int H d \mu_{\beta(E)}=  P(-\beta(E) f_1^v ) + \beta(E)\, E.$$

\begin{proposition}
 \begin{equation} \label{rert}\frac{d h(E)}{d E}=  \frac{1}{ T(E)}.
 \end{equation}
\end{proposition}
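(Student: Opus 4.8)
The plan is to differentiate the closed-form expression for the entropy displayed immediately above the statement, namely
$$ h(E) = \mathfrak{P}(-\beta(E)\,H) + \beta(E)\, E, $$
with respect to $E$ by the chain rule, and then to exploit a cancellation forced by the defining relation of $\beta(E)$. Here $\beta(E)$ is the function implicitly defined in \eqref{p497}. First I would record that it is differentiable: by the inverse function theorem this follows because, under Hypothesis A, the map $\beta \mapsto \mathfrak{P}(-\beta H)$ is a strictly convex analytic function, so its derivative $E(\beta)=-\frac{d}{d\beta}\mathfrak{P}(-\beta H)$ is strictly monotone (its own derivative being minus the nonvanishing asymptotic variance), hence invertible with analytic inverse $\beta(E)$.

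Next I would compute the derivative of the first term using the pressure-derivative formula \eqref{rewo} (equivalently \eqref{p297}): since $\frac{d}{d\beta}\mathfrak{P}(-\beta H) = -\int H\,d\mu_\beta = -E(\beta)$, the chain rule gives
$$ \frac{d}{dE}\,\mathfrak{P}(-\beta(E)H) = \frac{d\mathfrak{P}(-\beta H)}{d\beta}\Big|_{\beta=\beta(E)}\cdot \frac{d\beta}{dE} = -E(\beta(E))\,\frac{d\beta}{dE} = -E\,\frac{d\beta}{dE}, $$
where the last equality uses $E(\beta(E))=E$, which is precisely \eqref{p497}. Differentiating the second term $\beta(E)\,E$ by the product rule yields $\frac{d\beta}{dE}\,E + \beta(E)$.

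Adding the two contributions, the terms $-E\frac{d\beta}{dE}$ and $+E\frac{d\beta}{dE}$ cancel, leaving
$$ \frac{dh(E)}{dE} = \beta(E). $$
Since $\beta = 1/T$ (the Boltzmann constant being normalized to $1$, as in Section \ref{gaz}), this is exactly the claimed fundamental Gibbs equation \eqref{rert}. The only nontrivial point is the first one, namely justifying that $\beta(E)$ is a well-defined differentiable function of $E$; everything else is the standard envelope/Legendre cancellation and needs no computation beyond \eqref{rewo} and \eqref{p497}. Structurally this is the Legendre-transform statement that the slope of the entropy--energy curve equals the conjugate variable $\beta$, consistent with the duality \eqref{bact16} developed earlier in the section.
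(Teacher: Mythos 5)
Your proposal is correct and follows essentially the same route as the paper: differentiate $h(E)=\mathfrak{P}(-\beta(E)H)+\beta(E)E$ by the chain and product rules, use $\frac{d}{d\beta}\mathfrak{P}(-\beta H)=-E(\beta)$ from \eqref{p297}, and observe the cancellation of the $\pm E\,\frac{d\beta}{dE}$ terms. Your explicit justification that $\beta(E)$ is well defined and differentiable (via strict convexity of the pressure under Hypothesis A and the inverse function theorem) is a welcome addition that the paper only gestures at in the parenthetical remark following \eqref{p497}.
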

\begin{proof}

Indeed,
$$ \frac{d h(E)}{d E}= \frac{d P(-\beta(E) f_1^v )}{ d \beta}  \frac{ d \beta (E)}{d E} + \frac{ d \beta (E)}{d E} E + \beta(E)=$$
\begin{equation} \label{rertio} - E  \frac{ d \beta (E)}{d E} +  \frac{ d \beta (E)}{d E} E +\beta(E) =  \beta(E) =  \frac{1}{ T(E)}.
\end{equation}
\end{proof}

The above expression is the classical relation among entropy and time (see expression  (5.37) in \cite{Cat} and section B in \cite{Altaner}). It is called the {\it Gibbs fundamental equation} (see (2.7) and  (2.8) in \cite{Sch}).
We point out that in spin systems the temperature can be negative.

Suppose now that the temperature $T $ is fixed (that is $\beta$ fixed). Then,
\begin{equation} \label{bact17} P(\beta f_1^v) = h (\mu_v) + \beta \int f_1^v d \mu^v_{\beta},
\end{equation}
where $\mu^v_{\beta}$ is the equilibrium probability for the potential $\beta  f_1^v$, and $\beta=1/T$ is fixed.

The value \begin{equation} \label{p575} p =   -  \frac{\delta W}{ \delta v}|_{v =v_0}
\end{equation}
represents  {\it pressure} (see expression (5.42) in \cite{Cat}) when the volume is $v_0$ and  $\frac{\delta W}{ \delta v}$ is given by \eqref{p275}.


\section{Thermodynamic operation in Thermodynamic Formalism} \label{TO}

We are interested in a certain special form of thermodynamic operation and the associated  meaning for nonequilibrium in Thermodynamic Formalism.
We will consider two probabilities $\mu_1$ and $\mu_2$ which  can interact and a certain form of conservation of energy.
We are interested in the variation of heat $\Delta Q$, the variation of work $\Delta W$, etc. More precisely on the expression
$$ \Delta W + \Delta Q = \Delta U,$$
where $\Delta U$  denotes the change in the internal energy.


Consider a normalized H\"older potential $A= \log J_1$ and another one
$B = \log J_2$ and the probabilities $\mu_j$, $j=1,2$, which are H\"older Gibbs respectively for the potentials
$\log J_j$, $j=1,2$.

 We perform the discrete-time thermodynamic operation  
 $$\mu_2 \to  \mathcal{L}^*_{\log J_1}(\mu_2)=\mu_3$$ 
 on the system in equilibrium and we are interested in the variation of work $\Delta W$ and the variation of the free energy of the
 system after the thermodynamic operation. The hypothesis above (where $A= \log J_1$ and $ B=\log J_2$ are normalized H\"older potentials) means that we assume that the temperature is such that to $\beta=1$.

In this section, we consider a discrete-time variation as oppose to the continuous-time variation of volume of  Section \ref{gazes}.

 Our purpose is to adapt the reasoning of section 4 in \cite{Sa} to the Thermodynamic Formalism setting.

Assume that $\mathcal{L}_{\log J_2}^* (\mu_2)= \mu_2,$ where $J_2$ is a H\"older Jacobian.
As we said before the action $\mu_1 \to  \mathcal{L}_{\log J_2}^*(\mu_1)$
should be understood as a   thermodynamic operation on the compose system $\mu_1$, $\mu_2$  which was initially in equilibrium.

\smallskip

Given  the probability $\mu$  the {\it energy} $E(\mu)$  for  the joint system $\mu_1$ and $\mu_2$   is
\begin{equation} \label{energ}
\mu \to E(\mu) =\int \log J_1 d \mu - \int \log J_2 d \mu.
\end{equation}

Given the joint system $\mu_1$ and $\mu_2$,
the expression
\begin{equation} \label{troto}   \Delta Q  (\mu_1,\mu_2)=  \int \log J_2 d \mathcal{L}_{\log J_2}^*(\mu_1) - \int \log J_2 d \mu_1
 \end{equation}
is the {\it  (quasi-static) average heat absorption} due to the  thermodynamic operation
$\mu_1 \to  \mathcal{L}_{\log J_2}^*(\mu_1)$. The value $\Delta Q$ expresses the change of heat.

\eqref{troto} corresponds  to the expression
\begin{equation} \label{trotoi} \Delta Q= \sum_i E_i (P\, p)_i - \sum_i E_i p_i,
\end{equation}
described at the beginning of section 4.1 in \cite{Sa}, where $E_i, i=1,2..,k$, is energy, $p=(p_1,p_2,...,p_k)$ a  probability and $P$ a stochastic matrix.

Given the H\"older Jacobian $\log J_2$  and  an invariant probability $\mu_1$, denote $\mu_3=\mathcal{L}^*_{\log J_2}(\mu_1)$ and $J_3$
the Radon-Nikodin derivative of $\mu_3$. Remember that $\log J_3 (x) = \log J_1( \sigma(x)) + \log  J_2 (x) - \log J_2 (\sigma(x)).$

The {\it variation of energy} for  the joint system $\mu_1$ and $\mu_2$,   due to the  thermodynamic operation $\mu_1 \to  \mathcal{L}_{\log J_2}^*(\mu_1)$,  is
$$ \Delta U = E(\mu_1)  - E(\mathcal{L}_{\log J_2}(\mu_1))= E(\mu_1)  - E(\mu_3)=$$
\begin{equation} \label{energ1}  [\int \log J_1 d \mu_1 - \int \log J_2 d \mu_1 ]-
[\int \log J_1 d \mu_3 - \int \log J_2 d \mu_3 ] .
\end{equation}
\smallskip

Given the joint system $\mu_1$ and $\mu_2$
the {\it (quasi-static) variation of work}    due to the thermodynamic operation $\mu_1 \to  \mathcal{L}_{\log J_2}(\mu_1)$ is
$$\Delta W(\mu_1,\mu_2)= \int  \log J_1 d \mu_1 -  \int  \log J_1 d \mathcal{L}_{\log J_2}^*(\mu_1)= $$
\begin{equation} \label{wo}  \int  \log J_1 d \mu_1 -  \int  \log J_1 d \mu_3 \geq 0.
\end{equation}
\smallskip

A simple computation shows the {\it First Law of Thermodynamics}:

\begin{proposition} \label{oirt} Conservation of energy
\begin{equation} \label{worte} \Delta W + \Delta Q = \Delta U.
\end{equation}
\end{proposition}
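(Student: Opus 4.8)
The plan is purely algebraic, so I would organize everything around the four numbers $\int \log J_i\, d\mu_j$ with $i\in\{1,2\}$ and $j\in\{1,3\}$, where throughout $\mu_3 = \mathcal{L}^*_{\log J_2}(\mu_1)$ denotes the image of $\mu_1$ under the thermodynamic operation. Each of the three assembled quantities appearing in the statement is nothing but a signed combination of these four integrals: the work $\Delta W$ from \eqref{wo}, the heat $\Delta Q$ from \eqref{troto} (rewritten using $\mathcal{L}^*_{\log J_2}(\mu_1)=\mu_3$), and the internal energy change $\Delta U$ from \eqref{energ1} (itself built from the energy functional \eqref{energ}). So the first step is simply to record these three expansions side by side.

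Concretely, from \eqref{wo} one has $\Delta W = \int \log J_1\, d\mu_1 - \int \log J_1\, d\mu_3$; from \eqref{troto} one has $\Delta Q = \int \log J_2\, d\mu_3 - \int \log J_2\, d\mu_1$; and from \eqref{energ} and \eqref{energ1} one has $\Delta U = \left(\int \log J_1\, d\mu_1 - \int \log J_2\, d\mu_1\right) - \left(\int \log J_1\, d\mu_3 - \int \log J_2\, d\mu_3\right)$. Adding the first two lines, I expect to obtain
\[
\Delta W + \Delta Q = \int \log J_1\, d\mu_1 - \int \log J_1\, d\mu_3 + \int \log J_2\, d\mu_3 - \int \log J_2\, d\mu_1,
\]
and the final step is to observe that this is term-by-term identical to the expansion of $\Delta U$ above, merely with the four summands written in a different order. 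This establishes $\Delta W + \Delta Q = \Delta U$.

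I do not expect a genuine obstacle here: the identity is a formal consequence of the definitions and requires no invariance of $\mu_1$ nor any analytic property of the Ruelle operator beyond the bookkeeping that $\mu_3 = \mathcal{L}^*_{\log J_2}(\mu_1)$ is the common object entering all three quantities. The only point that demands care is the sign bookkeeping — the conventional minus sign before the work term (as in \eqref{t2}) together with the signs built into the energy functional $E$ in \eqref{energ} — and one should verify these are consistent so that the $J_1$-terms against $\mu_3$ and the $J_2$-terms against $\mu_1$ carry the signs that make the cancellation produce exactly $\Delta U$ rather than some variant. All the substance of the result lies in the physical interpretation of the three terms as work, heat, and internal energy; the equality itself is a one-line verification.
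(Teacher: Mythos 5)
Your proof is correct and is exactly the computation the paper has in mind: the paper states only that ``a simple computation shows'' the identity, and that computation is precisely your expansion of $\Delta W$, $\Delta Q$, and $\Delta U$ from \eqref{wo}, \eqref{troto}, and \eqref{energ1} into the four integrals $\int \log J_i\, d\mu_j$, followed by the observation that the two sides agree term by term. The sign bookkeeping you flag does check out against the paper's conventions.
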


\smallskip

The claim of the above lemma corresponds to \eqref{t2} in Section \ref{gaz}.



See the role of conservation of energy in expression (5.6) in \cite{MNS}.







 Related results for  continuous-time Markov chains are presented in  section 3.1 in \cite{Wang}.

\smallskip

\section{Entropy production via reversion of direction on the lattice} \label{EP}

Entropy production is a very important topic in the study of Thermodynamic of gases (see \cite{MN}, \cite{Cat} and \cite{Altaner}).
Entropy production  is the amount of entropy which is produced in any irreversible processes such as heat and mass transfer, heat exchange, fluid flow, etc., in thermal machines. There are important issues related to the increase of entropy if the system is reversible or irreversible.
One of our purposes on our paper was to describe a large range of topics that could establish  a common ground for discussion between mathematicians and physicists.  This topic was not very much discussed in the Thermodynamic Formalism community. The use of the so called {\it involution kernel} to detect if a given system is reversible or irreversible is not very well known tool for both communities. Below we will elaborate on the topic.

In this section, we consider the classic concept of entropy production that is related to symmetry (or non-symmetry) in the lattice $ \mathbb {Z}$.
The results in this section are not new, they are just a summary of what was presented in \cite{LM}. For a given potential
$ A: \{1,2, ..., d \}^\mathbb{N} \to \mathbb {R} $, we will describe the meaning of being symmetrical with respect to the involution kernel. This can be interpreted as saying that the system associated with this potential is reversible. We will show that the entropy production is zero if the potential is symmetrical with respect to the involution kernel.  Related results appear in \cite{MNS} and section 5 in Chapter II of \cite{Liggett} which consider different settings. 

Assume elements in $\hat{\Omega}= \{1,2,...,d\}^\mathbb{Z}$ will be written in the form
$$(...,y_{3},y_{2},y_1|x_1,x_2,x_3,...)=(y\,|\, x).$$

We point out that in \cite{LM} a more general analysis of KL divergence and entropy production is considered: instead of
$\hat{\Omega}= \{1,2,...,d\}^\mathbb{Z}$ it is considered the case when  $\hat{\Omega}= M^\mathbb{Z}$ and where $M$ is a compact metric space.

In this section we consider the thermodynamic operation $\theta$ which is reversion of time on  $\{1,2,...,d\}^\mathbb{Z}$.
By this we mean $\theta((...,z_{3},z_{2},z_1|) = |z_1,z_2,z_3,...).$
This formulation will produce  the more well-known form of the definition of entropy production. The concept of reversibility or irreversibility is characterized by the entropy production to be, respectively,  zero or not zero. These are fundamental concepts in nonequilibrium Statistical Mechanics (see \cite{MNS}).

Consider a given potential $A$ which is not normalized and the associated equilibrium probability $\mu_A$. We ask if there is a simple way to check if the entropy production of $\mu_A$ is zero. This can be achieved via the use of the concept of Involution Kernel (see \cite{BLT}) and Proposition \ref{variational2}.

We denote $\Omega$ by $\Omega^{+}$. The elements of  $\Omega^{+}$ are denoted by $x=|\,x_1,x_2,...).$
Consider  the space $\Omega^-=\{1,2,...,d\}^{\mathbb{N}}$ (which is formally different from   $\Omega^{+}$).
Any point in the space  $\Omega^-$ will be written in the form $y=(...,y_{3},y_{2},y_1|$ and any point in $\hat{\Omega}=\Omega^-\times \Omega^+$ will be written in the form $(...,y_{3},y_{2},y_1|x_1,x_2,x_3,...)=(y\,|\, x)$.

We consider on $\hat{\Omega}$ the shift map $\hat{\sigma}$ given by
\begin{equation} \label{ret}
\hat{\sigma}((...,y_{3},y_{2},y_1|x_1,x_2,x_3,...)) = (...,y_{3},y_{2},y_1,x_1|x_2,x_3,...).
\end{equation}
The natural restriction of  $\hat{\sigma}$ over $\Omega=\Omega^{+}$ is the shift map $\sigma$. The natural restriction of $\hat{\sigma}^{-1}$ over $\Omega^-$ is denoted by $\sigma^-$. Observe that $(\Omega^-,\sigma^-)$  can be identified with $(\Omega^{+},\sigma)$, via  the conjugation $\theta: \Omega^-\to \Omega^{+}=\Omega$, which is  given by
\begin{equation} \label{ket} \theta((...,z_{3},z_{2},z_1|) = |z_1,z_2,z_3,...). \end{equation}

Any $\sigma$-invariant  probability $\mu$ on $\Omega^+$ can be extended to a $\hat{\sigma}$-invariant probability $\hat{\mu}$ on $\Omega^-\times \Omega^{+}$. The restriction of $\hat{\mu}$ to $\Omega^{-}$, denoted by $\mu^{-}$, is  $\sigma^-$-invariant. By identifying $(\Omega^{-},\sigma^-)$ with $(\Omega,\sigma)$, via the conjugation $\theta$ and denoting by $\theta_*\mu^-$ the push forward of $\mu^-$, we get
\begin{equation}\label{eq1}
\theta_*\mu^-(|a_1,a_2....a_m]) = \mu(|a_m,...,a_{2},a_1]).
\end{equation}

\smallskip

The role $\mu \to \theta^* (\mu)$ can be seen as a thermodynamic operation (reversion of time). We will define the associated entropy production and we will ask for each kind of H\"older potentials $A: \Omega^+ \to \mathbb{R}$ the corresponding Gibbs state
has entropy production zero. This can be characterized via the involution kernel.

\smallskip

\begin{definition} \label{poupou} We say that $A^{-}: \Omega^{-} \to \mathbb{R}$ is the dual potential of $A$ if for some $W$ we get
\[A^{-}(y)=A^{-}((...,y_3,y_2,y_1|) = A(|y_1,x_1,x_2,...) \,)+ W(...,y_3,y_2|y_1,x_1,x_2,x_3,...)\]
\begin{equation}\label{Aminus1}-   W(...,y_3,y_2,y_1|x_1,x_2,x_3,...),\end{equation}
for any $(...,y_3,y_2,y_1|x_1,x_2,x_3,...) \in \hat{\Omega}$.

The important point here is that  $A^{-}$ is a function only  of the variable $y \in \Omega^{-}.$
We call {\it involution kernel}   any $W$ satisfying the above.  
\end{definition}

Given $A$, the involution kernel $W$ is not unique (and therefore the dual potential $A^{-}$ is also not unique.)

If \eqref{Aminus1} is true we say that $W$ is the involution kernel for the dual pair $A,A^{-}$.

The proof that when $A$ is a H\"older function
such $W$ and $A^{-}$ exist appeared initially in \cite{BLT} (for a simple proof of this result see section 4 in \cite{LNotes}).
One can show that $A$ is H\"older and $W$ is bi H\"older.

We say that the potential $A$ is {\it symmetric}  if there exists an involution kernel $W$ such that   $A^{-}=A$. There are many examples of potentials that are symmetric (see \cite{CL3} or section 5 in \cite{BLLbook}). The potential to be symmetrical is related with the detailed balance condition and the entropy production to be zero (see the  claim  of Theorem \ref{variational2} and the  subsequent paragraph).

 \smallskip

For a given H\"older potential $A$ the probability $\mu_A$ denotes the equilibrium probability for $A$.

 \smallskip

\begin{definition}\label{muminuseq} Let $A:\Omega^+\to\mathbb{R}$ be a H\"older function and  $W$ be any bi H\"older involution kernel for $A$. Now, consider the function $A^{-}$ on $\Omega^{-}$ as defined by (\ref{Aminus1}).  We denote
 	$\mu_A^-$ the equilibrium probability for $A^-$ in $\Omega^-$.
 	
\end{definition}

$\mu_A^-$ and $\mu_A$ are dual probabilities associated with a pair of  dual potentials.

Following   \cite{LM} we define:

\begin{definition} Given the potential $A$, the {\bf $A$-flip entropy production} of the  equilibrium probability $\mu_A$ is defined as
	\begin{equation} \label{bact45} e_p(\mu_A) = D_{KL}(\mu_A,\theta_*\mu^-_A),
	\end{equation}
	where $\theta_*\mu^-_A$ on $\Omega^{+}$ is the push-forward of $\mu^{-}_A$ by the conjugation $\theta:\Omega^-\to\Omega^+$ given by \eqref{ket}.
\end{definition}	
\smallskip

\begin{theorem}\label{variational2} Suppose that $\mu_A$ is the equilibrium probability for the H\"older function $A:\Omega^+\to\mathbb{R}$. Let $W$ be any H\"older involution kernel for $A$ and $A^-:\Omega^-\to \mathbb{R}$ be the dual  function. Suppose that $A^{-}$ is defined on $\Omega^+$ using the conjugation $\theta$. Then, the entropy production is
\begin{equation} \label{tritro} e_p(\mu_A) =  \int A - A^-\,d\mu_A .
\end{equation}
\end{theorem}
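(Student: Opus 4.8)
The plan is to unfold the definition $e_p(\mu_A) = D_{KL}(\mu_A,\theta_*\mu_A^-)$ and feed it into the dynamic KL-divergence formula \eqref{imp2}. Writing $\nu := \theta_*\mu^-_A$, this gives
$$ e_p(\mu_A) = -\int \log J_\nu \, d\mu_A + \int \log J_A \, d\mu_A, $$
where $J_A$ and $J_\nu$ are the H\"older Jacobians of $\mu_A$ and $\nu$. So the whole computation reduces to identifying these two Jacobians and integrating them against the $\sigma$-invariant probability $\mu_A$.

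The decisive observation I would make is that $\nu=\theta_*\mu^-_A$ is itself an equilibrium state. Since $\theta:(\Omega^-,\sigma^-)\to(\Omega^+,\sigma)$ given by \eqref{ket} is a bi-H\"older conjugacy, it carries the equilibrium probability $\mu_A^-$ of $A^-$ to the equilibrium probability on $\Omega^+$ of the transported potential $\tilde A := A^-\circ\theta^{-1}$; this is precisely what ``$A^-$ defined on $\Omega^+$ using the conjugation $\theta$'' means. In particular $\nu$ is H\"older Gibbs, so it is suitable and \eqref{imp2} applies. Conjugacy invariance of the topological pressure together with the equality of pressures for a dual pair then yields $\mathfrak{P}(\tilde A)=\mathfrak{P}(A^-)=\mathfrak{P}(A)$.

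Next I would invoke \eqref{pois} to write the normalized Jacobians as $\log J_A = A + \log\varphi_A - \log(\varphi_A\circ\sigma)-\mathfrak{P}(A)$ and $\log J_\nu = \tilde A + \log\varphi_{\tilde A} - \log(\varphi_{\tilde A}\circ\sigma)-\mathfrak{P}(\tilde A)$, with $\varphi_A,\varphi_{\tilde A}$ the positive H\"older eigenfunctions of $\mathcal{L}_A$ and $\mathcal{L}_{\tilde A}$. Integrating against $\mu_A$ and using $\sigma$-invariance (\eqref{Gre}), each coboundary term $\log\varphi-\log(\varphi\circ\sigma)$ integrates to zero, leaving $\int\log J_A\,d\mu_A = \int A\,d\mu_A - \mathfrak{P}(A)$ and $\int\log J_\nu\,d\mu_A = \int\tilde A\,d\mu_A - \mathfrak{P}(\tilde A)$. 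Substituting into the displayed KL expression and cancelling the two pressures, equal by the previous step, gives $e_p(\mu_A) = \int A\,d\mu_A - \int\tilde A\,d\mu_A = \int (A - A^-)\,d\mu_A$, which is \eqref{tritro}.

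The hard part will be the middle step: confirming that $\nu=\theta_*\mu^-_A$ genuinely is the H\"older Gibbs equilibrium state for $\tilde A$ and, crucially, that $\mathfrak{P}(\tilde A)=\mathfrak{P}(A)$. The conjugacy property is routine once one checks that $\theta$ is an isometry (hence bi-H\"older), but the equality of pressures for the dual pair $(A,A^-)$ is the real input from involution-kernel theory as developed in \cite{BLT}; it is exactly what makes the two pressure terms cancel and the clean expression $\int(A-A^-)\,d\mu_A$ emerge. Everything else is bookkeeping with $\sigma$-invariance.
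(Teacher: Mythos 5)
Your argument is correct. The paper itself gives no proof of Theorem \ref{variational2} (it defers to \cite{LM}), but your route is the natural one and matches the standard involution-kernel argument: expand $D_{KL}(\mu_A,\theta_*\mu^-_A)$ via \eqref{imp2}, use that $\theta$ is an isometric conjugacy so $\theta_*\mu^-_A$ is the H\"older Gibbs state for $A^-\circ\theta^{-1}$, normalize both potentials via \eqref{pois}, kill the coboundaries by $\sigma$-invariance of $\mu_A$, and cancel the pressures. The one external input you need, $\mathfrak{P}(A^-)=\mathfrak{P}(A)$, is exactly what the paper records in Remark \ref{poix} (equality of the main eigenvalues of $\mathcal{L}_A$ and $\mathcal{L}_{A^-}$, from \cite{BLT}), so your proof is complete with resources already cited in the text.
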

\smallskip
For the proof of the above theorem see \cite{LM}.
\smallskip

Suppose we just want to show that $ \mu_A $ has zero entropy production, that is, $ e_p (\mu_A) = 0. $
To calculate $ e_p (\mu_A) $, we need to have explicit information about $  \mu_A $ and $ \mu^{ -}_ A $. Given a non-normalized potential $ A $, in order to have explicit information about the probability of equilibrium $ \mu_A $ it is necessary to find the eigenvalue and eigenfunction of the Ruelle operator
$ \mathcal {L} _A $ (see expression \eqref {pois}). 
 This is clearly a technical difficulty; among other things, you need to guess the exact value of the eigenvalue.  
Note that given $A$, to solve  \eqref{Aminus1} is a much more simple problem because you do not need to find the eigenvalue. The next result (a more simple form of solving the problem) follows immediately from Theorem \ref {variational2}:

\begin{corollary}  If we are able to get an involution kernel $W$, such that, the  dual potential $A^{-}$ is equal to $A$, then the entropy production is equal to zero.

\end{corollary}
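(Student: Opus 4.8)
The plan is to derive this Corollary as an immediate consequence of Theorem \ref{variational2}, which already gives the closed formula
\begin{equation} \label{plan1} e_p(\mu_A) = \int A - A^-\, d\mu_A.
\end{equation}
First I would recall that the hypothesis ``the dual potential $A^{-}$ is equal to $A$'' is precisely the symmetry condition of the potential: after transporting $A^{-}:\Omega^-\to\mathbb{R}$ to $\Omega^+$ via the conjugation $\theta$ of \eqref{ket}, one has $A^- = A$ as functions on $\Omega^+$. This identification via $\theta$ is exactly the convention under which the integrand in \eqref{plan1} is well defined, since Theorem \ref{variational2} already assumes ``$A^{-}$ is defined on $\Omega^+$ using the conjugation $\theta$.''

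With that identification in place, the argument is a single line: under the assumption $A^- = A$ the integrand vanishes identically, $A - A^- \equiv 0$ on $\Omega^+$, and therefore
\begin{equation} \label{plan2} e_p(\mu_A) = \int (A - A^-)\, d\mu_A = \int 0\, d\mu_A = 0.
\end{equation}
No properties of the equilibrium probability $\mu_A$ beyond its being a probability measure are needed, and in particular one does not invoke the eigenvalue or eigenfunction of $\mathcal{L}_A$.

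There is essentially no obstacle in the deduction itself; the only point deserving care is conceptual rather than computational, namely verifying that the phrase ``$A^{-}$ is equal to $A$'' is interpreted through the conjugation $\theta$, matching the hypotheses of Theorem \ref{variational2} and the definition of a symmetric potential given just before the statement. Once this bookkeeping is made explicit, the conclusion $e_p(\mu_A)=0$ follows directly from \eqref{plan2}, so I would keep the proof to the two short displays above together with a sentence recalling the symmetry convention.
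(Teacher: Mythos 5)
Your proposal is correct and matches the paper's own argument: the paper derives this corollary as an immediate consequence of Theorem \ref{variational2}, exactly as you do, by noting that $A^- = A$ makes the integrand in $e_p(\mu_A)=\int A - A^-\, d\mu_A$ vanish. Your extra remark about interpreting the equality $A^-=A$ through the conjugation $\theta$ is a sensible clarification but does not change the substance.
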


\begin{remark} \label{poix} 
 Given a non normalized H\"older potential $A$, we say that the probability   $\nu_A$ on $\Omega$  is the eigenprobability for the dual of the Ruelle operator  $\mathcal{L}_A^*$, if
 $\mathcal{L}_A^*(\nu_A) =\lambda_A\, \nu_A$, where $\lambda_A$ is the main eigenvalue for $\mathcal{L}_A.$ Note that from \cite{BLT} (or \cite{LMMS}) the main eigenvalue of the Ruelle operator for $A$ and the  main eigenvalue of the Ruelle operator for its dual $A^{-}$ are the same. The same thing for $\mathcal{L}_A^*$ and $\mathcal{L}_{A^{-}}^*.$

An interesting property (see \cite{BLT}) of the involution kernel $W$ is the following type of duality: assume $A^{-}$ the dual of $A$   and $\nu_{A^{-}}$ is the eigenprobability for the dual of the Ruelle operator  $\mathcal{L}_{A^{-}}^*$, then,
\begin{equation} \label{tritri} \int e^{ W (y|x)} d \nu_{A^{-}}(y) = \varphi_A(x)
\end{equation}
is the main eigenfunction of the Ruelle operator $\mathcal{L}_A$, where 
$W$ is the involution kernel for the dual pair $A$, $ A^{-}$. Other eigenfunctions (not strictly positive) can be eventually obtained via 
eigendistributions for $\mathcal{L}_A^*$ (see \cite{GLP}).
\end{remark}

\smallskip

\end{document}